\newcommand{\CC}{\subset\hskip-0.2em\subset}
\newcommand{\beq}{\begin{equation}}
\newcommand{\eeq}{\end{equation}}
\def\R{\mathbb R}
\def\N{\mathbb N}
\def\Z{\mathbb Z}
\def\cal{\mathcal}
\def\Ht{{{\widetilde H}^1}}
\def\H{{\cal H}^{N-1}}
\def\de{\delta}
\def\e{\varepsilon}
\def\s{\sigma}
\def\vphi{\varphi}
\def\Om{\Omega}
\newcommand{\medint}{-\kern -,375cm\int}
\newcommand{\medintinrigo}{-\kern -,315cm\int}
\newcommand{\wto}{\rightharpoonup}
\def\Div{\operatorname*{div}\nolimits}
\newcommand{\T}{{{\mathbb T}^N}}
\renewcommand{\P}{P_\T}
\newcommand{\pa}{\partial}
\newcommand{\E}{\mathcal{E}}
\numberwithin{equation}{section}
\newtheorem{theorem}{Theorem}[section]
\newtheorem{corollary}[theorem]{Corollary}
\newtheorem{lemma}[theorem]{Lemma}
\newtheorem{proposition}[theorem]{Proposition}
\theoremstyle{definition}
\newtheorem{definition}[theorem]{Definition}
\newtheorem{remark}[theorem]{Remark}
\begin{document}

\title[Nonlocal isoperimetric problem]{Minimality via second variation\\for a nonlocal isoperimetric problem}

\author{E.Acerbi, N.Fusco, M.Morini}
\address[E.\ Acerbi]{Dipartimento di Matematica, Universit\`a degli Studi di Parma, Parma, Italy}
\email{emilio.acerbi@unipr.it}
\address[N.\ Fusco]{Dipartimento di Matematica e Applicazioni ``R. Caccioppoli'',
Universit\`{a} degli Studi di Napoli ``Federico II'' , Napoli, Italy}
\email{n.fusco@unina.it}
\address[M.\ Morini]{Dipartimento di Matematica, Universit\`a degli Studi di Parma, Parma, Italy}
\email{massimiliano.morini@unipr.it}

\dedicatory{Dedicated to Sergio Spagnolo on his 70th birthday}
\begin{abstract}
We discuss the local minimality of certain configurations for a nonlocal isoperimetric problem used to model microphase separation in diblock copolymer melts. We show that critical configurations  with positive second variation are local minimizers of the nonlocal area functional and, in fact, satisfy a quantitative isoperimetric inequality with respect to sets  that are  $L^1$-close.  The link with local minimizers for the diffuse-interface Ohta-Kawasaki energy is also discussed. As a byproduct of the quantitative estimate, we get  new results concerning periodic local minimizers of the area functional and a proof, via second variation, of the sharp quantitative isoperimetric inequality in the standard Euclidean case. As a further application, we address the global and local minimality of certain lamellar configurations.
\end{abstract}
\keywords{Diblock copolymers, Second variation, Isoperimetric inequality, Local minimizers}
\subjclass[2000]{49Q10, 35R35, 82B24, 49S05}
\maketitle






\section{Introduction}
Diblock copolymers are extensively studied materials, used to engineer nanostructures thanks to their peculiar properties and rich pattern formation.
A well established theory used in the modeling of microphase separation for A/B diblock copolymer melts is based on the following energy first proposed by Ohta-Kawasaki, see \cite{OK}:
\beq\label{in.OK}
\mathcal E_\e(u):=\e\int_{\Om}|\nabla u|^2\, dx+\frac1\e\int_\Om(u^2-1)^2\, dx
+\gamma_0\int_\Om\int_\Om G(x,y)\bigl(u(x)-m\bigr)\bigl(u(y)-m\bigr)\, dx\,dy\;,
\eeq
where $u$ is an $H^1(\Om)$ phase parameter describing the density distribution of the components  ($u=-1$ stands for one phase, $u=+1$ for the other), $m=\medintinrigo_\Om u$ is the difference of the phases' volume fractions and $G$ is the Green's function for $-\Delta$. The parameter $\gamma_0\geq0$ is characteristic of the material.

Since $\e$ is a small parameter, from the point of view of mathematical analysis it is more convenient to consider the variational limit of the energy \eqref{in.OK}, which is given by
$$
\mathcal E(u):=\frac12 |D u|(\Om)+\gamma\int_\Om\int_\Om G(x,y)\bigl(u(x)-m\bigr)\bigl(u(y)-m\bigr)\, dx\,dy\;,
$$
where now $u$ is a function of bounded variation in $\Om$ with values $\pm 1$, $|Du|(\Om)$ is the total variation of $u$ in $\Om$, and $\gamma=3\gamma_0/16\geq0$. Writing
$$E=\{x\in\Om:u(x)=1\}\;,$$
so that $u=\chi_E-\chi_{\Om\setminus E}$, this energy may be rewritten in a useful geometric fashion as
\beq\label{in.OKgeom}
J(E)=P_\Om(E)+\gamma\int_\Om\int_\Om G(x,y)\bigl(u(x)-m\bigr)\bigl(u(y)-m\bigr)\, dx\,dy\;,
\eeq
where $P_\Om(E)$ is the perimeter of $E$ in $\Om$.

Competition between the short-range interfacial energy and the long-range nonlocal Green's function term in both functionals \eqref{in.OK} and \eqref{in.OKgeom} leads to pattern formation. Indeed the perimeter term drives the system toward a raw partition in few sets of pure phases with minimal interface area, whereas the Green's term is reduced by a finely intertwined distribution of the materials.

As observed in the literature, the domain structures in phase-separated diblock copolymers closely approximate periodic surfaces with constant mean curvature, see e.g.~\cite{TAHH}. Some of the most commonly observed structures are schematized in Figure~1.
\begin{figure}[h!]\label{figurauno}
\includegraphics[scale=0.8]{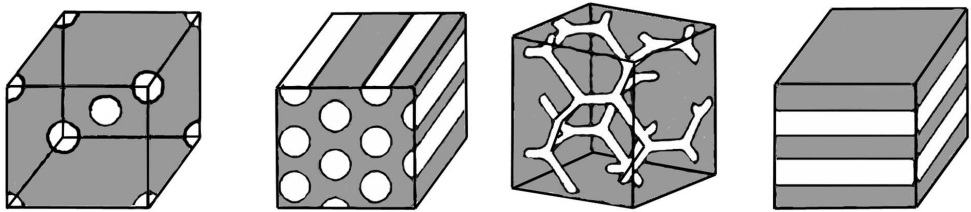}\vskip -0.2cm \caption{\small From left to right spherical spots, cylinders, gyroids and lamellae.}
\end{figure} 

A challenging mathematical problem is to prove that global minimizers of \eqref{in.OKgeom} are periodic: this is known to be true in one dimension, see e.g.~\cite{Mu,RW}, but still open in higher dimensions, where only partial results are known, see e.g. \cite{ACO,Sp}. We refer also to \cite{CP, CPW, CSp, vGP, GMS1, GMS2, KM1, KM2, Mur, ST, Topa} for other related results on global minimizers. A more reasonable task is to exhibit a class of periodic solutions which are local minimizers of the approximating and limit energies \eqref{in.OK} and \eqref{in.OKgeom}, rather than investigating general properties of global minimizers: this is the direction taken, among others, by Ren and Wei  and by Choksi and Sternberg.  The first authors in a series of papers \cite{RW0,RW1,RW2,RW3,RW4}  construct several examples of lamellar, spherical and cylindrical critical configurations and find conditions under which they are stable, i.e., their second variation  is positive definite.  The main contribution in  \cite{CS} is the computation of the second variation for general critical configurations of \eqref{in.OKgeom} (see also \cite{Mur0, MO}, where related linear stability/instability issues have been addressed for the first time, but a more physical perspective). However, all these papers leave open the basic question whether the positivity of the second variation implies local minimality. 

We give a full answer to this question by showing that any critical configuration of  \eqref{in.OKgeom} with positive definite
second variation is a local minimizer with respect to small $L^1$-perturbations. We now describe in more details the results proved here. We  consider both the periodic case, where $\Om=\T$ is the N-dimensional flat torus of unit volume, and the homogeneous Neumann case, where $\Om$ is a bounded smooth open set. We start by considering the periodic case. 

We recall that a sufficiently smooth critical set for $J$ satisfies the Euler-Lagrange equation
$$
H_{\pa E}(x)+4\gamma v(x)=\lambda \qquad\text{for all $x\in \partial E$,}
$$
where $H_{\pa E}(x)$ denotes the sum of the principal curvatures of $\partial E$ at $x$, the number $\lambda$ is a constant Lagrange multiplier associated to the volume constraint  $\int_\T u\, dx =m$, i.e., $|E|=(m+1)/2$, and
$$
v(x):=\int_\T G(x,y)(u(y)-m)\, dy
$$
is the unique solution to 
$$
-\Delta v=u-m \quad\text{in $\T$}\qquad \int_{\T}v\, dx =0\,.
$$
By the results of \cite{CS}, we can associate to the second variation of \eqref{in.OKgeom} at a regular critical  set $E$ the  quadratic
form $\pa^2J(E)$ defined over all functions $\vphi\in H^1(\pa E)$ such that $\int_{\pa E}\vphi\, d\H=0$ by
\begin{equation*}
\begin{split}
\pa^2J(E)[\varphi]& = \int_{\pa E}\bigl(|D_\tau\varphi|^2-|B_{\pa E}|^2\varphi^2\bigr)\, d\H \\
& \quad+8\gamma \int_{\pa E}  \int_{\pa E}G(x,y)\varphi(x)\varphi(y)d\H(x)\,d\H(y)
 +4\gamma\int_{\pa E}\partial_{\nu}v\varphi^2\,d\H\,,
\end{split}
\end{equation*}
where $\nu$ is the outer normal to $\pa E$,  $|B_{\pa E}|^2$ is the sum of the squares of the principal curvatures of $\pa E$, and $D_\tau$ is the tangential gradient.
Note that the condition $\int_{\pa E}\vphi\, d\H=0$ is related to the fact that we consider local minimizers of $J$ under a volume constraint. It is easily checked that if $E$ is a local minimizer, then $\pa^2 J(E)$ is positive semidefinite. 

Therefore, it is natural to look for sufficient  conditions for minimality based on the positivity of    $\pa^2 J(E)$.
However, we have to take into account that $J$ is translation invariant, so that in particular $J(E)=J(E+t\eta)$ for all 
$\eta\in \R^N$ and $t\in \R$. By differentiating   twice this identity with respect to $t$, we  obtain
$\pa^2J(E)[\eta\cdot \nu]=0$. This shows that there is always a finite dimensional subspace $T(\pa  E)$ of directions where the second variation degenerates. Thus, we are led to decompose $\Ht(\pa E)=\{\vphi\in H^1(\pa E):\, \int_{\pa E}\vphi\, d\H=0 \}$ as
$$
\Ht(\pa E)=T^\perp(\pa E)\otimes T(\pa E)\,,
$$
where $T(\pa E)$ is the subspace generated by the functions $\vphi=\nu_i$, $i=1,\dots, N$, and 
$$
T^\perp(\pa E)=\Bigl\{\vphi\in \Ht(\pa E):\, \int_{\pa E}\vphi \nu\, d\H=0\Bigr\}\,.
$$
Since our energy functional is invariant under translations, it is convenient to define the distance between two subsets of $\T$ modulo
translations in the following way:
\beq\label{recente1}
\alpha(E,F):=\min_x|E\triangle (x+F)|\,.
\eeq
The main result of the paper reads as follows.
\begin{theorem}\label{th:l1min}
Let $E\subset\T$ be a regular  critical set of $J$  such that
$$
\pa^2J(E)[\vphi]>0\qquad\text{for all $\vphi\in T^\perp(\pa E)\setminus\{0\}$.}
$$
Then,  there exist $\delta$, $C>0$  such that 
\beq\label{quantit.area}
J(F)\geq J(E)+C (\alpha(E,F))^2
\eeq
for all $F\subset\T$, with   $|F|=|E|$ and $\alpha(E,F)<\delta$.
 \end{theorem}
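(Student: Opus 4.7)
The strategy is a contradiction argument combined with a Taylor expansion of $J$ along a one-parameter family joining $E$ to a competitor. Suppose the estimate fails: there exist $F_n\subset\T$ with $|F_n|=|E|$, $\e_n:=\alpha(E,F_n)\to 0$, and $J(F_n)-J(E)<\e_n^2/n$. A generic $L^1$-perturbation is too rough for $\pa F_n$ to be written as a normal graph over $\pa E$, so I would first invoke a selection principle: replace $F_n$ by a minimizer $\tilde F_n$ of a penalized functional of the form
\[
\tilde J_n(F):=J(F)+\Lambda_1\bigl||F|-|E|\bigr|+\Lambda_2\bigl(\alpha(E,F)-\e_n\bigr)^2
\]
in the $L^1$-ball $\{\alpha(E,F)\leq 2\e_n\}$. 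Since the nonlocal part of $J$ is $C^1$ in the $L^1$-topology, each $\tilde F_n$ is a $\Lambda$-minimizer of the perimeter, so standard De Giorgi--Almgren regularity combined with the smoothness of $\pa E$ gives $\pa \tilde F_n\to\pa E$ in $C^{1,\a}$, while still $J(\tilde F_n)-J(E)\leq J(F_n)-J(E)+o(\e_n^2)$.

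Next, write $\pa\tilde F_n$ as the normal graph over $\pa E$ of a function $h_n$ with $\|h_n\|_{C^{1,\a}}\to 0$. By a finite-dimensional implicit function argument I would pick a translation $y_n\to 0$ such that the height function $h_n^{y_n}$ associated with $y_n+\tilde F_n$ satisfies
\[
\int_{\pa E} h_n^{y_n}\,\nu_i\,d\H = 0\qquad i=1,\dots,N,
\]
placing $h_n^{y_n}$ in $T^\perp(\pa E)$ (the additive constant needed to enforce $|y_n+\tilde F_n|=|E|$ is of order $\|h_n^{y_n}\|_{L^2}^2$ and does not disturb the argument). Translation invariance of $J$ gives $J(y_n+\tilde F_n)=J(\tilde F_n)$ and $\alpha$ is unchanged.

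Now interpolate via the family $E_{n,t}$ whose boundary is the normal graph of $t h_n^{y_n}$ over $\pa E$. Criticality and the volume constraint annihilate the first-order term in the Taylor expansion, giving
\[
J(y_n+\tilde F_n)-J(E) = \int_0^1 (1-t)\,\pa^2 J(E_{n,t})[\psi_{n,t}]\,dt + \text{lower order},
\]
where $\psi_{n,t}$ is the pushforward of $h_n^{y_n}$ to $\pa E_{n,t}$. The crucial analytic step is to show that the whole quadratic form $\pa^2J(\cdot)[\cdot]$ is continuous under $C^{1,\a}$-convergence of sets and $H^1$-convergence of test functions, including the nonlocal contributions $8\g\iint G\vphi\vphi$ and $4\g\int\pa_\nu v\,\vphi^2$. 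Combining this continuity with the assumed strict positivity of $\pa^2J(E)$ on $T^\perp(\pa E)\setminus\{0\}$ and the near-orthogonality of $\psi_{n,t}$ to the translation subspace, one obtains
\[
\pa^2J(E_{n,t})[\psi_{n,t}] \geq c_0\,\|h_n^{y_n}\|_{H^1(\pa E)}^2\qquad \forall\,t\in[0,1]
\]
for $n$ large. Since $\alpha(E,\tilde F_n)\leq C\|h_n^{y_n}\|_{L^1(\pa E)}\leq C\|h_n^{y_n}\|_{H^1(\pa E)}$, this contradicts the hypothesis $J(\tilde F_n)-J(E)<\e_n^2/n$.

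The main obstacles are expected to be: (i) calibrating the penalization parameters so that regularity of $\tilde F_n$ is inherited while $\alpha(E,\tilde F_n)\sim\e_n$ is preserved; (ii) the continuity of $\pa^2J$ under $C^{1,\a}$-convergence, which for the nonlocal part requires careful estimates on the Green's function restricted to moving boundaries and is the most technical point; and (iii) transferring the spectral gap of $\pa^2J(E)$ on $T^\perp(\pa E)$ into a \emph{uniform} gap of $\pa^2J(E_{n,t})$ on the approximate $T^\perp(\pa E_{n,t})$, since the hypothesis is only placed at $E$.
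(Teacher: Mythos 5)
Your overall strategy matches the paper's: argue by contradiction, replace the contradicting sequence by minimizers of a penalized functional, upgrade their regularity via $\omega$-minimality, translate to kill the degenerate directions, and close via a second-variation/Taylor-expansion argument along a family of interfaces. The skeleton is right. There is, however, a genuine gap in the analytic heart of the argument.

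You claim that the crucial step is to prove ``the whole quadratic form $\pa^2 J(\cdot)[\cdot]$ is continuous under $C^{1,\alpha}$-convergence of sets and $H^1$-convergence of test functions.'' This cannot work as stated: the local part of $\pa^2 J(E_{n,t})[\varphi]$ contains the term $\int_{\pa E_{n,t}}|B_{\pa E_{n,t}}|^2\varphi^2\,d\H$, and the second fundamental form $B_{\pa E_{n,t}}$ involves second derivatives of the boundary. Under mere $C^{1,\alpha}$ convergence this term is not controlled and is not even well defined for the interpolating sets. De Giorgi--Almgren regularity for $\omega$-minimizers stops at $C^{1,\alpha}$, so what you get at this stage is insufficient. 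The paper must, and does, perform an extra bootstrap: the penalized minimizers satisfy a prescribed-mean-curvature Euler--Lagrange equation with uniformly bounded right-hand side (after a variation argument giving $\|H_{\partial F_h}\|_{L^\infty}\le C$), and elliptic estimates then yield $\psi_h\to 0$ in $W^{2,p}(\pa E)$; see Step~4 of Theorem~\ref{th:inftymin} and Lemma~\ref{lemmino}. Only after this upgrade does one have $B_{\pa F_h}\circ\Phi_h\to B_{\pa E}$ in $L^p$, which makes the curvature term in $\pa^2 J$ stable (this is exactly \eqref{proj2} in Step~1 of Theorem~\ref{th:c2min}). Your proposal omits this step entirely, and without it the Taylor-expansion argument collapses.

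Two further, smaller, discrepancies. First, you propose to select the translation $y_n$ by an implicit-function argument so as to achieve \emph{exact} orthogonality $\int_{\pa E}h_n^{y_n}\nu_i=0$. The paper's Lemma~\ref{lm:italiano} shows that after a nontrivial iteration one obtains only an \emph{approximate} orthogonality $|\int_{\pa E}\varphi\nu|\le\delta\|\varphi\|_{L^2}$ (or else $F$ is a translate of $E$ and the conclusion is trivial); the map $\sigma\mapsto\int\varphi^\sigma\nu$ has nonlinear corrections which prevent the straightforward zero you assume, and this is precisely why Lemma~\ref{lm:italiano} is stated and proved the way it is. Second, you interpolate linearly through normal graphs $t h_n^{y_n}$, whereas the paper (Theorem~\ref{prop:connect}) constructs a divergence-free, hence volume-preserving, flow; this kills the Lagrange-multiplier contributions exactly (see \eqref{c2min4}--\eqref{c2min7}) rather than leaving them as lower-order terms whose smallness you would have to establish. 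Neither of these is fatal, but they add real technical debt. The $W^{2,p}$ upgrade, on the other hand, is an essential missing step.
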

 A first application of the previous theorem deals with lamellar configurations. In Theorem~\ref{caffe} we show that if a horizontal strip $L$ is the unique solution of the  isoperimetric problem in $\T$,   then it is also the unique global minimizer of the non local functional \eqref{in.OKgeom} under the volume constraint,  provided $\gamma$ is sufficiently small. In the two-dimensional case it is known that a horizontal strip  minimizes  the perimeter in 
 $\mathbb{T}^2$ if and only if  the volume fraction parameter $m$  satisfies $|m|<1-\frac{2}{\pi}$.  Therefore, our Theorem~\ref{caffe} yields the global minimality of a single strip for small values of $\gamma$ if $|m|<1-\frac{2}{\pi}$, thus giving an alternative proof of a result already proved in \cite{ST}. Concerning the three-dimensional case, to the best of our knowledge nothing was known about the minimality of the lamellar configuration, apart from a classical result by Hadwiger (see \cite{H}), who proved that the strip  is the unique minimizer of the perimeter in $\mathbb{T}^3$ under the volume constraint  $\frac12$. In Section~\ref{appl} we improve this result by showing that the isoperimetric property still holds for strips with volume in a neighborhood of $\frac12$ (see Theorem~\ref{erice}). In turn, this implies via Theorem~\ref{caffe}  that such strips are also global minimizers of $J$ for $\gamma$ small. 

We also mention, as a simple consequence of Theorem~\ref{th:l1min},  that in any dimension and for any $\gamma>0$ lamellar configurations are local minimizers, provided that the number of strips is sufficiently large (see Proposition~\ref{digest}).

 It is important to remark that Theorem~\ref{th:l1min}, besides proving strict local minimality, contains a quantitative estimate of the 
 deviation from minimality for sets close to $E$ in $L^1$. This can be viewed as a quantitative  isoperimetric inequality for the nonlocal perimeter \eqref{in.OKgeom}, in the spirit of the recent results proved in \cite{FMP}, see also \cite{FiMP,CL}. Indeed, since our result  holds also  when
 $\gamma=0$, we cover the important case of  local minimizers of the area functional under periodicity conditions. 
 \begin{corollary}\label{cor:area}
 Let $E\subset \T$ be a regular set whose boundary has constant mean curvature and such that 
 $$
 \int_{\pa E}\bigl(|D_{\tau}\vphi|^2-|B_{\pa E}|^2\vphi^2\bigr)\, d\H>0\qquad\text{for all $\vphi\in T^\perp(\pa E)\setminus\{0\}$.}
 $$ 
 Then,  there exist $\delta$, $C>0$  such that 
$$
P_\T(F)\geq P_\T(E)+C (\alpha(E,F))^2
$$
for all $F\subset\T$, with   $|F|=|E|$ and $\alpha(E,F)<\delta$.
 \end{corollary}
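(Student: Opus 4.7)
The plan is to obtain Corollary~\ref{cor:area} as an immediate specialization of Theorem~\ref{th:l1min} to the case $\gamma=0$. First I would observe that when $\gamma=0$ the nonlocal functional defined in \eqref{in.OKgeom} degenerates to the pure perimeter $J(F)=P_\T(F)$, so that the inequality \eqref{quantit.area} is precisely the quantitative isoperimetric inequality stated in the corollary.

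Next I would check that the hypotheses on $E$ in the corollary are exactly the specialization to $\gamma=0$ of those in Theorem~\ref{th:l1min}. Indeed, the Euler--Lagrange equation $H_{\pa E}+4\gamma v=\lambda$ collapses to $H_{\pa E}=\lambda$, i.e.\ $\pa E$ has constant mean curvature, which is the standing assumption of the corollary; in particular $E$ is a regular critical set of $J=P_\T$ in the sense required by Theorem~\ref{th:l1min}. Moreover, in the general formula
\[
\pa^2J(E)[\varphi] = \int_{\pa E}\bigl(|D_\tau\varphi|^2-|B_{\pa E}|^2\varphi^2\bigr)\, d\H
+8\gamma \int_{\pa E}\!\!\int_{\pa E}G(x,y)\varphi(x)\varphi(y)\,d\H(x)\,d\H(y)
+4\gamma\int_{\pa E}\pa_\nu v\,\varphi^2\,d\H,
\]
the two nonlocal contributions vanish identically when $\gamma=0$, leaving only the classical area second variation. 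Hence the positivity hypothesis
\[
\int_{\pa E}\bigl(|D_\tau\varphi|^2-|B_{\pa E}|^2\varphi^2\bigr)\, d\H>0\qquad \text{for all }\varphi\in T^\perp(\pa E)\setminus\{0\}
\]
is literally the same as $\pa^2 J(E)[\varphi]>0$ on $T^\perp(\pa E)\setminus\{0\}$ for $\gamma=0$.

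With these identifications in place, I would simply invoke Theorem~\ref{th:l1min}: there exist $\delta,C>0$ such that $J(F)\geq J(E)+C(\alpha(E,F))^2$ whenever $|F|=|E|$ and $\alpha(E,F)<\delta$, which by $J=P_\T$ is precisely the conclusion sought. There is no genuine obstacle here beyond making sure that Theorem~\ref{th:l1min} was stated with $\gamma\geq 0$ (and not only $\gamma>0$); as emphasized in the paragraph preceding the corollary, the main theorem does cover the case $\gamma=0$, so the deduction is immediate.
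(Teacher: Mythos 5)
Your proposal is correct and matches the paper's own reasoning: the paper explicitly notes, in the sentence preceding the corollary, that Theorem~\ref{th:l1min} covers the case $\gamma=0$, and the corollary is stated precisely as this specialization, with the constant-mean-curvature condition and the reduced quadratic form being exactly the $\gamma=0$ forms of the criticality and strict-stability hypotheses. Nothing further is needed.
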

Previous related investigations  were carried out by B.~White \cite{white} and K.~Grosse-Brauckmann \cite{Gr}, who proved that the strict positivity of the second variation implies local minimality with respect to small $L^\infty$-perturbations. Their results were recently extended by F.~Morgan and A.~Ros in \cite{MR}, where they show that strictly stable constant mean curvature hypersurfaces are area minimizing with respect to small $L^1$-perturbations, up to dimension $N=7$, thus giving a positive answer to a conjecture formulated in \cite{CS2}. Our corollary removes the restriction $N\leq 7$ and improves their result in a quantitative fashion.

Notice that  Corollary~\ref{cor:area} applied to the unit ball $E$ and with $\T$ replaced by $c\T$ for $c>0$ sufficiently large,  yields the quantitative isoperimetric inequality in the standard Euclidean case for bounded open sets $F$ with small asymmetry index $\alpha(E,F)$. This fact, in view of 
Lemma~5.1 in \cite{FMP}, implies the quantitative isoperimetric inequality for all sets, \emph{thus leading to an alternative proof based on the second variation.}

The Neumann counterpart to Theorem~\ref{th:l1min} is stated and proved in Section~\ref{secneu}.

%

We now briefly describe the strategy of the proof of Theorem~\ref{th:l1min}. The first step is to show that strict stability implies local minimality with respect to $W^{2,p}$-perturbations, see Theorem~\ref{th:c2min}. This is accomplished by constructing suitable volume-preserving flows connecting 
the critical set $E$ to a given close competitor $F$ and by carefully analyzing the continuity properties of the quadratic form 
$\pa^2J$ along the flow (see Theorem~\ref{prop:connect}). A technical difficulty in this analysis comes from the translation invariance, since we have to avoid the degenerate directions at all times. This issue is dealt with in Lemma~\ref{lm:italiano}, where it is shown that given any set $F$ sufficiently $W^{2,p}$-close to $E$, one can always find a translation of $F$ such that the function describing the boundary of the new set has small component in $T^\perp(\partial E)$.

The second step is to show that any $W^{2,p}$-local minimizer is in fact an $L^1$-local minimizer. This is done  by a contradiction argument: we assume that there exists
a sequence $E_h$ of sets such that $|E_h|=|E|$, and $E_h\to E$ in $L^1$, but inequality \eqref{quantit.area} fails along the sequence. Then, following an idea used in \cite{FM} for a  two dimensional problem related to epitaxial growth, we replace the sequence $E_h$ with a new sequence $F_h$ of minimizers of suitable penalized problems, tailored in such a way that  \eqref{quantit.area} still fails. 
Using regularity techniques we then show that in fact the sets $F_h$  have uniformly bounded curvatures and  converge to $E$ strongly in $W^{2,p}$, thus contradicting the 
$W^{2,p}$-local minimality of $E$. A penalization approach via regularity has been recently used also in \cite{CL} to prove the quantitative isoperimetric inequality in the Euclidean case. However, our method is quite different and seems more suited to deal with local minimizers. 

We now state a  result that links Theorem~\ref{th:l1min} with the existence of local minimizers for the
Ohta-Kawasaki energy \eqref{in.OK}. 
Fix $m\in (-1, 1)$. We say that a function $u\in H^1(\T)$ is an \emph{isolated local minimizer} for the functional $\mathcal E_\e$ with prescribed volume $m$, if $\int_\T u\, dx =m$ and there exists $\de>0$ such
that 
$$
\mathcal E_\e(u)<\mathcal E_\e(w) \qquad \text{for all $w\in H^1(\T)$ with }\int_\T w\, dx=m,\quad
0<\min_{\tau}\|u-w(\cdot+\tau)\|_{L^1(\T)}\leq \de\,.
$$
Since it is well-known that the functionals $\mathcal E_\e$ only $\Gamma$-converge in $L^1$ to the sharp interface energy
 $J$, the $L^1$-local minimality result proved in Theorem~\ref{th:l1min} allows to show:
\begin{theorem}\label{th:OK}
Let $E$ be a regular  critical set  for the functional $J$ with positive second variation and $u=\chi_E-\chi_{\T\setminus E}$. 
Then there exist  $\e_0>0$ and a family $\{u_\e\}_{\e<\e_0}$ of isolated local minimizers of $\mathcal E_\e$ with prescribed volume 
$m=\int_\T u\, dx$ such that 
$u_\e\to u$ in $L^1(\T)$ as $\e\to 0$.
\end{theorem}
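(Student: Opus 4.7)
The plan is to combine the quantitative $L^1$-local minimality of $E$ given by Theorem~\ref{th:l1min} with the standard $\Gamma$-convergence of $\mathcal E_\e$ to $J$ in the $L^1$-topology, together with the attendant $L^1$-compactness of sequences $(w_\e)$ with equi-bounded $\mathcal E_\e$-energies. Fix $\delta, C>0$ as in Theorem~\ref{th:l1min}. For each $\e>0$ I would define $u_\e$ as a solution to the constrained problem
\[
\min\Bigl\{\mathcal E_\e(w):w\in H^1(\T),\ \int_\T w\,dx=m,\ \min_\tau\|u-w(\cdot+\tau)\|_{L^1(\T)}\leq \delta\Bigr\}.
\]
Existence follows from the direct method: the admissible class is $L^1$-closed and translation invariant, while $\mathcal E_\e$ is $L^1$-lower semicontinuous and coercive in $H^1$ for each fixed $\e$.

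The next step is to prove $u_\e\to u$ in $L^1$. Testing the constrained infimum against a standard recovery sequence $\tilde u_\e\to u$ (which lies in the admissible class for $\e$ small) one gets $\limsup_\e \mathcal E_\e(u_\e)\leq J(E)$. By the compactness attached to the $\Gamma$-convergence, along subsequences $u_\e\to \chi_F-\chi_{\T\setminus F}$ in $L^1$ for some $F\subset\T$ with $|F|=|E|$ and $\alpha(E,F)\leq\delta$; the $\Gamma$-$\liminf$ inequality then forces $J(F)\leq J(E)$. Applying the quantitative bound of Theorem~\ref{th:l1min} yields $\alpha(E,F)=0$, so $F$ is a translate of $E$. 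Translating each $u_\e$ accordingly (which does not affect $\mathcal E_\e$, by the translation invariance of the torus), I get $u_\e\to u$ in $L^1$ along the full sequence.

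Local minimality is then immediate: for $\e<\e_0$ small enough we have $\min_\tau\|u-u_\e(\cdot+\tau)\|_{L^1}<\delta/2$, so $u_\e$ lies in the $L^1$-interior of the admissible class. Hence any $w\in H^1(\T)$ with $\int_\T w\,dx=m$ and $\min_\tau\|u_\e-w(\cdot+\tau)\|_{L^1}\leq\delta/2$ is itself admissible, by the triangle inequality, giving $\mathcal E_\e(u_\e)\leq \mathcal E_\e(w)$. The \emph{isolated} (strict) part of the definition I would derive by contradiction: if some $w_\e\neq u_\e$ modulo translations satisfied $\mathcal E_\e(w_\e)=\mathcal E_\e(u_\e)$ with $\min_\tau\|u_\e-w_\e(\cdot+\tau)\|_{L^1}\to 0$, then $w_\e$ would itself be a constrained minimizer, so by the previous argument $w_\e\to u$ in $L^1$; combining the sharp bound $J(F)\geq J(E)+C(\alpha(E,F))^2$ with a careful $\Gamma$-$\liminf$-type estimate applied to $w_\e$ should rule this out for $\e$ sufficiently small.

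I expect the main obstacle to be precisely this last step: $\Gamma$-convergence alone yields local minimality but not strictness, and transferring the quantitative second-order information from $J$ down to $\mathcal E_\e$ requires additional care (for instance, a sharp lower bound on $\mathcal E_\e(w)-\mathcal E_\e(u_\e)$ in terms of $\alpha(E,w)^2$ along competitors $w$ in the constraint set). The remainder is a fairly standard direct-method-plus-$\Gamma$-convergence-plus-uniqueness scheme, resting ultimately on the quantitative stability supplied by Theorem~\ref{th:l1min}.
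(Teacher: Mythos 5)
Your overall strategy --- minimize $\mathcal E_\e$ over the constraint set $\{w:\int_\T w=m,\ \min_\tau\|u-w(\cdot+\tau)\|_{L^1}\le\delta\}$, extract $u_\e$, show $u_\e\to u$ via the $\Gamma$-convergence of $\mathcal E_\e$ to $\frac{16}{3}\mathcal E$ together with the isolated local minimality of $E$ for $J$, and conclude that $u_\e$ eventually lies in the interior of the constraint and is therefore a local minimizer --- is exactly the Kohn--Sternberg scheme the paper invokes; its own proof is a one-line reference to Choksi--Sternberg \cite[Proposition 3.2]{CS2} plus the observation that the Modica--Mortola machinery applies. Up through non-strict local minimality your write-up is sound, apart from small cosmetics: the $\Gamma$-limit carries the constant $\tfrac{16}{3}$, the recovery sequence has to be corrected so as to preserve the volume constraint $\int_\T w=m$ before it can be used as a competitor, and the admissible class is not ``$L^1$-closed'' as a subset of $L^1$ --- closedness enters correctly only after one uses the energy bound to pass to weak-$H^1$/strong-$L^1$ limits.

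The genuine gap is the one you yourself flag: the \emph{isolated} (strict) part of the conclusion. The constrained-minimization-plus-$\Gamma$-convergence argument shows $\mathcal E_\e(u_\e)\le\mathcal E_\e(w)$ for nearby admissible $w$, but it does not rule out equality; a priori the constrained problem at fixed $\e$ could have several distinct minimizers arbitrarily close to $u_\e$ modulo translations, and nothing in the $\Gamma$-$\liminf$ inequality or in the quantitative bound $J(F)\ge J(E)+C\alpha(E,F)^2$ prevents this at the $\e$-level. Your proposed contradiction argument is also set up with inconsistent indexing (you write ``$w_\e\neq u_\e$ \dots\ with $\min_\tau\|u_\e-w_\e(\cdot+\tau)\|_{L^1}\to0$'', but failure of isolation at a fixed $\e$ means a sequence $w_k\to u_\e$ as $k\to\infty$, not a sequence indexed by $\e$), so as written it does not even state the right negation. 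The actual passage from strict $L^1$-local minimality of $E$ for $J$ to \emph{strict} $L^1$-local minimality of $u_\e$ for $\mathcal E_\e$ is precisely the content of \cite[Proposition 3.2]{CS2} (and, behind it, \cite[Theorem 2.1]{KS} in the Neumann version, used in Theorem~\ref{th:OKneu}), which you did not have access to; this step needs to be carried out, not merely invoked heuristically, before the proof can be considered complete.
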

An analogous result holds in the Neumann case, see Theorem~\ref{th:OKneu}. We stress that the choice of the $L^1$ topology in the minimality result stated in Theorem~\ref{th:l1min} is crucial in the proof of Theorem~\ref{th:OK}.

We conclude this introduction by observing that Theorem~\ref{th:OK} and its Neumann counterpart apply to a wealth of examples of 
strictly stable critical configurations for the sharp interface functional in \eqref{in.OKgeom}. Among these,  we mention the many droplet and spherical patterns proved to be strictly stable  in \cite{RW3} and \cite{RW4}, for some range of the parameters involved. In particular,  we can deduce that  for small values of $\e$ there are local minimizers of the diffuse energies \eqref{in.OK} which are close  to such configurations, thus solving a problem which was left open in the aforementioned papers.   

A straightforward variant of the argument used to prove Theorem~\ref{th:OK} shows also that if $\pa E$ is a periodic strictly stable constant mean curvature hypersurface, then for sufficiently small values of $\e$ and $\gamma_0$ in \eqref{in.OK} there exist
local minimizers of $\mathcal E_\e$ which are close to $E$. This seems to gives a mathematical confirmation to the findings of Thomas et al. \cite{TAHH}, 
who observed domain structures in phase-separated diblock copolymers that closely approximate 
triply periodic constant mean curvature surfaces, such as the gyroids. Indeed, strict stability for a class of triply periodic surfaces was proved in \cite{Ross}.

The paper is organized as follows: in section~\ref{stat} we give the precise mathematical formulation of the problem and we prove some preliminary results concerning the regularity of local minimizers; section~\ref{minlocsecvar} is devoted to the proof of the $W^{2,p}$-local minimality of critical configurations with positive second variation. In section~\ref{sec:locmin} we show that any $W^{2,p}$-local minimizer is in fact an $L^1$-local minimizer: this result is used to complete the proof of Theorem~\ref{th:l1min}. Section~\ref{appl} is devoted to the minimality properties of lamellar configurations. The extension to the Neumann case is contained in section~\ref{secneu}, and in the final appendix we collect a few technical results and computations. 

\section{Notation and auxiliary results}\label{stat}
In the following we shall denote by $\T$ the $N$-dimensional flat torus of unit volume, i.e., the quotient of $\R^N$  under
 the equivalence relation
 $
 x\sim y \iff x-y\in \Z^N\,.
 $
 Thus, the functional space $W^{k,p}(\T)$, $k\in \N$, $p\geq 1$, can be identified with the subspace of $W^{k,p}_{loc}(\R^N)$ of functions  that are one-periodic with respect to all coordinate directions. Similarly  
 $C^{k,\alpha}(\T)$, $\alpha\in (0,1)$ denotes the space of  one-periodic functions in  $C^{k,\alpha}(\R^N)$.
 
 We now recall the definition of a function of bounded variation in the periodic setting considered in the paper.
 We say that a function $u\in L^1(\T)$ is of bounded variation if its total variation
 $$
 |Du|(\T):=\sup\biggl\{\int_\T u\,\Div \zeta\, dx:\, \zeta\in C^{1}(\T, \R^N)\,, |\zeta|\leq 1 \biggr\}
 $$
 is finite. We denote the space of such functions by $BV(\T)$. We say that a measurable set  $E\subset \T$ is of 
 \emph{finite perimeter in $\T$} if its characteristic function $\chi_E\in BV(\T)$. The perimeter $\P(E)$ of $E$ in $\T$ is nothing but  the total variation $|D\chi_E|(\T)$. We refer to \cite{AFP} for all the main properties of sets of finite perimeter needed in the following.

 For  fixed $m\in (-1,1)$ and $\gamma\geq 0$ we consider the following nonlocal variational problem:
\begin{equation}\label{egamma}
\text{minimize}\quad \mathcal{E}(u):=\frac12|Du|(\T)+\gamma\int_\T|\nabla v|^2\, dx\,, 
\end{equation}
over all $u\in BV(\T; \{-1,1\})$, with
\begin{equation}\label{nol}
 -\Delta v=u-m\quad \text{ in }\T\,, \quad \int_\T v\, dx=0\,,\quad \text{where}\, \int_\T u\, dx=m\,;
\end{equation}
the equation is to be understood in the periodic sense.
Notice that 
\begin{equation}\label{G1}
\begin{split}
\int_\T |\nabla v|^2\, dx& =-\int_\T v\Delta v\, dx=  \int_\T v(u-m)\, dx\\
&= \int_\T vu\, dx=\int_\T\int_\T G(x,y)u(x)u(y)\, dxdy\,,
\end{split}
\end{equation}
where $G(x,y)$ is the solution of
\begin{equation}\label{G2}
-\Delta_y G(x,y)=\delta_x-1\quad\text{in $\T$,} \quad\int_\T G(x,y)\, dy=0\,.
\end{equation}
Here $\delta_x$ denotes the Dirac measure supported at $x$.

From now on, we regard $\E$ as a geometric functional defined on sets of finite perimeter. Precisely,
given $E\subset \T$ such that $|E|-|\T\setminus E|=m$, we set
\begin{equation}\label{J}
J(E):=P_\T(E)+\gamma\int_\T|\nabla v_E|^2\, dx
\end{equation}
where 
\begin{equation}\label{vE}
-\Delta v_E=u_E-m\quad \text{ in }\T\,,\quad\text{ with }\quad u_E:=\chi_E-\chi_{\T\setminus E}\,.
\end{equation}
\begin{remark}\label{rem:univ}
Notice that by standard elliptic regularity $v_E\in W^{2,p}(\T)$ for all 
$p\in [1,+\infty)$.
More precisely, given $p>1$, there exists a constant $C=C(p, N)$ such that
\beq\label{stimauniv}
\|v_E\|_{W^{2,p}(\T)}\leq C\qquad\text{for all $E\subset\T$ such that $|E|-|\T\setminus E|=m$.}
\eeq
\end{remark}

It can be shown (see \cite[Theorem 2.3]{CS}) that if $E$ is a sufficiently smooth (local) minimizer of the functional \eqref{J}, then the following 
Euler-Lagrange equation holds:
\begin{equation}\label{ELJ}
H_{\pa E}(x)+4\gamma v_E(x)=\lambda \qquad\text{for all $x\in \partial E$,}
\end{equation}
where $\lambda$ is a constant Lagrange multiplier associated to the volume constraint and $H_{\pa E}(x)$ denotes the sum of the principal curvatures of $\partial E$ at $x$; i.e., $H_{\pa E}(x)=\Div_{\tau} \nu^E$, where $\nu^E$ is the outer unit normal to $\pa E$ and
$\Div_{\tau}$ denotes the tangential divergence on $\pa E$ (see \cite[Section 7.3]{AFP}). When no confusion is possible, we shall omit the dependence of the outer unit normal on the set. 
\begin{definition}\label{def:noncianome}
 We say that $E\subset \T$ is \emph{a regular critical set} for the functional \eqref{J} if $E$ is of class $C^1$ and 
\eqref{ELJ} holds on $\partial E$ in the weak sense; i.e, 
$$
\int_{\pa E} \Div_{\tau} \zeta\, d\H=-\int_{\pa E}4\gamma v_E (\zeta\cdot \nu)\, d\H\quad \text{for all $\zeta\in C^1(\T; \R^N)$ s.t. }\int_{\pa E}\zeta\cdot\nu\, d\H=0\,. 
$$
\end{definition}

\begin{remark}\label{rem:critic}
 By Remark~\ref{rem:univ}, if $E$ is a regular critical set, then from \eqref{ELJ}, by standard regularity (see \cite[Theorem 7.57]{AFP}) we have that $E$ is of class $C^{1,\alpha}$ for all $\alpha\in (0,1)$. In turn,  Schauder estimates imply that
 $E$ is of class $C^{3,\alpha}(\T)$ for all   $\alpha\in (0,1)$.
 \end{remark}


\begin{definition}\label{def:locmin}
Recalling \eqref{recente1}, we say that a set $E\subset \T$ of finite perimeter is a {\em local minimizer} for the functional \eqref{J} if there esists $\delta>0$ such that
$$
J(F)\geq J(E)
$$
for all $F\subset\T$ with $|E|=|F|$ and $\alpha(E, F)\leq \delta$. If the inequality is strict whenever $\alpha(E,F)>0$, then we say that 
$E$ is an {\em isolated local minimizer}. We say that $E$ is a {\em regular local minimizer} if, in addition, it is a regular critical set according to Definition~\ref{def:noncianome}.
\end{definition}

We also recall the definition of $\omega$-minimizers for the area functional.

\begin{definition}\label{def:omegamin}
We say that a set of finite perimeter $E\subset \T$ is an $\omega$-minimizer for the area functional, $\omega>0$, if for any ball 
$B_r(x_0)\subset\T$ and any set of finite perimeter $F\subset \T$ such that $E\triangle F\CC B_r(x_0)$ we have
$$
\P(E)\leq \P(F)+\omega r^N\,.
$$
\end{definition}

Proposition~\ref{prop:EF} below shows that the volume constraint can be removed and replaced by a sufficiently large volume penalization term. Before proving it, we need the following lemma.

\begin{lemma}\label{lm:v}
There exists $C=C(N)>0$ such that if $E$, $F\subset\T$ are  measurable, then   
$$
\biggl|\int_{\T}|\nabla v_E|^2\, dx-\int_{\T}|\nabla v_F|^2\, dx\biggr|\leq C|E\triangle F|\,,
$$
where $v_E$ and $v_F$ are defined as in \eqref{vE}.
\end{lemma}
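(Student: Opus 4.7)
The plan is to rewrite the difference of the two Dirichlet energies as a single integral that exposes the symmetric difference $E\triangle F$, and then to use the uniform $L^\infty$ bound on $v_E$, $v_F$ that follows from Remark~\ref{rem:univ}.

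First I would write
$$
\int_\T |\nabla v_E|^2\, dx-\int_\T |\nabla v_F|^2\, dx=\int_\T \nabla(v_E-v_F)\cdot\nabla(v_E+v_F)\, dx,
$$
and then integrate by parts on the closed manifold $\T$ (no boundary terms), \emph{choosing to move the Laplacian onto the factor $v_E-v_F$}. Using $-\Delta(v_E-v_F)=u_E-u_F$ (the $m$ cancels on subtraction), this gives
$$
\int_\T |\nabla v_E|^2\, dx-\int_\T |\nabla v_F|^2\, dx=\int_\T (v_E+v_F)(u_E-u_F)\, dx.
$$
The point of this direction of integration by parts is that $u_E-u_F=2(\chi_E-\chi_F)$ vanishes outside $E\triangle F$, so the right-hand side is automatically supported on $E\triangle F$.

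Second I would estimate crudely
$$
\biggl|\int_\T (v_E+v_F)(u_E-u_F)\, dx\biggr|\leq 2\,\|v_E+v_F\|_{L^\infty(\T)}\,|E\triangle F|,
$$
and invoke Remark~\ref{rem:univ}: applying \eqref{stimauniv} with some $p>N/2$ (depending only on $N$) together with the Sobolev embedding $W^{2,p}(\T)\hookrightarrow L^\infty(\T)$ yields a universal bound $\|v_E\|_{L^\infty(\T)}\leq C(N)$, and similarly for $v_F$. Combining these two estimates produces the desired inequality with $C=4\sup_E\|v_E\|_{L^\infty(\T)}$.

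There is essentially no obstacle here; the only thing to be careful about is to integrate by parts in the correct direction. Transferring the Laplacian onto $v_E+v_F$ would yield $\int(v_E-v_F)(u_E+u_F)$, and since $|u_E+u_F|\leq 2$ is not localized, one would only gain control of $\|v_E-v_F\|_{L^1(\T)}$, whose sharp elliptic estimate gives at best $|E\triangle F|^{1/p}$ rather than $|E\triangle F|$. The asymmetric integration by parts above is therefore the key point.
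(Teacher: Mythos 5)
Your proof is correct, but it takes a genuinely different algebraic route from the paper's. The paper writes $\int|\nabla v_E|^2-\int|\nabla v_F|^2=\int|\nabla(v_E-v_F)|^2+2\int\nabla v_F\cdot\nabla(v_E-v_F)$ and then estimates the two pieces separately, using the PDE once to bound $\|\nabla(v_E-v_F)\|_{L^2}^2$ via a Poincar\'e/elliptic estimate and once more for the cross term. Your difference-of-squares factorization $\int\nabla(v_E-v_F)\cdot\nabla(v_E+v_F)$ followed by a single integration by parts collapses this into one step and, as you correctly emphasize, the key is putting the Laplacian on $v_E-v_F$ so that the integrand is supported on $E\triangle F$; what this buys is that you never need to estimate $\|\nabla(v_E-v_F)\|_{L^2}$ at all, only the uniform $L^\infty$ bound on the potentials, which both proofs ultimately invoke. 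One small correction to your justification: the constant $m$ does \emph{not} cancel on subtraction, because $m$ in \eqref{vE} is $\int_\T u_E=2|E|-1$ and thus depends on the set (the lemma allows $|E|\neq|F|$). The correct computation gives $-\Delta(v_E-v_F)=(u_E-u_F)-2(|E|-|F|)$, but the extra constant integrates to zero against $v_E+v_F$ since $\int_\T v_E=\int_\T v_F=0$, so your final identity $\int_\T(v_E+v_F)(u_E-u_F)\,dx$ is still exactly right. With that fixed, the argument is complete and a bit slicker than the paper's.
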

\begin{proof}
Note that
$$
\int_\T|\nabla v_{E}|^2\, dx-\int_\T|\nabla v_{F}|^2\, dx=\int_\T|\nabla v_{E}-\nabla v_{F}|^2\,dx +
2\int_\T\nabla v_{F}\cdot(\nabla v_{E}-\nabla v_{F})\, dx\,.
$$
Since 
$$
-\Delta(v_{E}-v_{F})=2(\chi_{E}-\chi_{F})-2(|E|-|F|)\,,
$$
 we have
 $$
 \int_\T|\nabla v_{E}-\nabla v_{F}|^2\,dx\leq c\int_\T|\chi_{E}-\chi_{F}+|F|-|E||^2\, dx\leq c|E\triangle F|\,.
 $$
 Moreover, 
 $$
 \int_\T\nabla v_{F}\cdot(\nabla v_{E}-\nabla v_{F})\, dx=2 \int_\T v_{F}(\chi_{E}-\chi_{F}+|F|-|E|)\, dx\leq
 c| E\triangle F|
 $$
 so that we may conclude that 
$$
 \biggl|\int_\T|\nabla v_{E}|^2\, dx-\int_\T|\nabla v_{F}|^2\, dx\biggr|\leq C| E\triangle F|\,.
$$

\end{proof}

\begin{proposition}\label{prop:EF}
Let $E$ be a local minimizer for the functional \eqref{J} and let $\delta>0$ be as in {\rm Definition~\ref{def:locmin}}. Then there exists $\lambda>0$ such that
$E$ solves the following penalized minimization problem:
$$
\min\Bigl\{J(F)+\lambda||F|-|E||:\, F\subset\T\,,\, \alpha(E,F)\leq\frac\delta2\Bigr\}\,.
$$
 \end{proposition}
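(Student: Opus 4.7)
My plan is to argue by contradiction and adjust volumes by a smooth flow. Suppose the statement fails: then for each $k\in\N$ one can find $F_k\subset\T$ with $\alpha(E,F_k)\le\delta/2$ and $J(F_k)+k\,\bigl||F_k|-|E|\bigr|<J(E)$. Since $P_\T(F_k)\le J(F_k)\le J(E)$ is bounded and $\bigl||F_k|-|E|\bigr|\le J(E)/k\to 0$, using the translation invariance of $J$ on $\T$ I would replace each $F_k$ by a translate so that $|F_k\triangle E|\le\delta/2$. $BV$-compactness then gives, along a subsequence still denoted $F_k$, convergence to some $F^*$ in $L^1$, with $|F^*|=|E|\in(0,1)$.

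The central step is to build, for large $k$, a competitor $\tilde F_k$ with $|\tilde F_k|=|E|$, $\alpha(E,\tilde F_k)\le\delta$, and
$$
J(\tilde F_k)\le J(F_k)+C\bigl||F_k|-|E|\bigr|
$$
for a constant $C$ independent of $k$. Granted this, local minimality of $E$ would give
$$
J(E)\le J(\tilde F_k)\le J(F_k)+C\bigl||F_k|-|E|\bigr|<J(E)+(C-k)\bigl||F_k|-|E|\bigr|,
$$
which for $k>C$ forces $|F_k|=|E|$. But then $F_k$ itself satisfies $\alpha(E,F_k)\le\delta/2<\delta$, $|F_k|=|E|$, and $J(F_k)<J(E)$, directly contradicting local minimality.

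To build $\tilde F_k$ I would introduce a smooth volume-adjusting flow on $\T$. Let $g\in C^\infty(\T)$ be a mean-zero mollification of $\chi_{F^*}-|F^*|$, let $\psi\in C^\infty(\T)$ be the unique mean-zero solution of $-\Delta\psi=g$, and set $X:=\nabla\psi$. Then $\Div X=g$, and for a small enough mollification parameter $\int_{F^*}\Div X\ge\tfrac12|E|(1-|E|)>0$; since $F_k\to F^*$ in $L^1$ and $g$ is bounded, $\int_{F_k}\Div X\ge c>0$ uniformly for $k$ large. Let $\Phi_t$ be the flow of $X$ on $\T$. The expansion $|\Phi_t(F_k)|=|F_k|+t\int_{F_k}\Div X+O(t^2)$, uniform in $F_k$ because $\|X\|_{C^2(\T)}$ is fixed, together with the intermediate value theorem yields $t_k$ with $|\Phi_{t_k}(F_k)|=|E|$ and $|t_k|\le C'\bigl||F_k|-|E|\bigr|$. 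Setting $\tilde F_k:=\Phi_{t_k}(F_k)$, the area and coarea formulas give $P_\T(\tilde F_k)\le(1+C|t_k|)P_\T(F_k)$ and $|\tilde F_k\triangle F_k|\le C|t_k|$, while Lemma~\ref{lm:v} bounds the change in the nonlocal term by $C|\tilde F_k\triangle F_k|$; summed, these yield the desired estimate, and $\alpha(E,\tilde F_k)\le|F_k\triangle E|+|\tilde F_k\triangle F_k|\le\delta/2+o(1)\le\delta$ for $k$ large.

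The main obstacle is securing a uniform lower bound $\int_{F_k}\Div X\ge c>0$: a vector field $X$ tailored only to $E$ does not suffice, since $\alpha(E,F_k)\le\delta/2$ does not force $F_k\to E$. Passing to a subsequential $L^1$-limit $F^*$ after absorbing translations circumvents this, and the fact that $0<|F^*|=|E|<1$ is what makes $\int_{F^*} g$ strictly positive and thus drives the whole estimate.
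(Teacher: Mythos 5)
Your proof is correct and follows a genuinely different route from the paper's. The paper argues by contradiction with a sequence of actual minimizers $E_h$ of $J_{\lambda_h}$ and uses the local bilipschitz dilation of Esposito--Fusco: it finds a ball where $E_h$ has the right density, builds a radial expansion supported there, and tunes the expansion parameter $\sigma_h$ to restore the volume while controlling perimeter and $L^1$ change (inequalities \eqref{EFper}, \eqref{EFvol}, \eqref{difsim}). You instead take arbitrary $F_k$ violating the penalized inequality, extract a subsequential $L^1$-limit $F^*$ with $|F^*|=|E|$, and build a single smooth, global volume-adjusting gradient flow tailored to $F^*$, using that $0<|F^*|<1$ to secure a uniform lower bound on $\int_{F_k}\Div X$. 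The intermediate-value step then supplies $t_k=O(||F_k|-|E||)$, and your perimeter, $L^1$ and Lemma~\ref{lm:v} estimates close the argument. Your approach buys you two simplifications: you avoid invoking existence of minimizers of the auxiliary penalized problems, and you replace the local density-point machinery of \cite{EF} by a soft global flow argument. Note a harmless sign slip: with $-\Delta\psi=g$ and $X=\nabla\psi$ you get $\Div X=-g$, so $\int_{F^*}\Div X\to-|F^*|(1-|F^*|)<0$; just take $X=-\nabla\psi$ (or solve $\Delta\psi=g$), after which your expansion, IVT step, and all subsequent estimates go through exactly as written. The only other point worth being explicit about is that your bound $|\tilde F_k\triangle F_k|\le C|t_k|$ carries a factor $P_\T(F_k)$, which is fine because the perimeters are uniformly bounded by $J(E)$, making the constant independent of $k$.
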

 
 \begin{proof}
 We adapt to our situation an argument  from \cite[Section 2]{EF}. We indicate only the relevant changes.
 We set
 $$
 J_\lambda(F):=J(F)+\lambda||F|-|E||\,.
 $$
 We argue by contradiction assuming that there exist a sequence $\lambda_h\to\infty$ and a sequence $E_h$ such that
 $$
 J_{\lambda_h}(E_h)=\min\Bigl\{J_{\lambda_h}(F):\, \alpha(E,F)\leq\frac\delta2\Bigr\}\,,
 $$
 but $|E_h|\neq |E|$. Without loss of generality we may assume that  $|E_h|<|E|$ (the other case being similar) and $E_h\to \widetilde E$, with $|\widetilde E|=|E|$ and $\alpha(E, \widetilde E)\leq \frac\delta2$. Notice that the compactness of $E_h$ follows from the fact that
 $J_{\lambda_h}(E_h)\leq J(E)$ and thus the perimeters are equibounded.
 
 Arguing as in Step 1 of  \cite{EF}, given $\e>0$ we can find  $r>0$ and a point $x_0\in \T$ such that  
 $$
 | E_h\cap B_{r/2}(x_0)|<\e r^N\,, \quad | E_h\cap B_{r}(x_0)|>\frac{\omega_Nr^N}{2^{N+2}}
 $$
 for all $h$ sufficiently large. To simplify the notation we assume that $x_0=0$ and we write $B_r$ instead of $B_r(0)$. For a sequence $0<\sigma_h<1/2^N$ to be chosen, we introduce the following sequence of bilipschitz maps:
  $$
  \Phi_h(x):=
 \begin{cases}
 (1-\sigma_h(2^N-1))x & \text{if $|x|\leq \frac r2$,}\\
 x+\sigma_h\Bigl(1-\frac{r^N}{|x|^N}\Bigr)x & \text{$\frac r2\leq |x|<r$,}\\
 x & \text{$|x|\geq r$.}
 \end{cases}
 $$
 Setting $\widetilde E_h:=\Phi_h(E_h)$, we have as in Step 3 of \cite{EF}
 \beq\label{EFper}
 P_{B_r}(E_h)-P_{B_r}(\widetilde E_h)\geq -2^NNP_{B_r}(E_h)\sigma_h\,.
 \eeq
 Moreover, as in Step 4  of \cite{EF} we have
$$
 |\widetilde E_h|-|E_h|\geq \sigma_hr^N\Bigl[c\frac{\omega_N}{2^{N+2}}-\e(c+(2^N-1)N)\Bigr]
$$
 for a suitable constant $c$ depending only on the dimension $N$.
 Let us fix $\e$ so that the negative term  in the square bracket does not exceed half  the  positive one, we have that 
  \beq\label{EFvol}
   |\widetilde E_h|-|E_h|\geq \sigma_hr^NC_1\,,
  \eeq
  with $C_1>0$ depending on $N$.
  In particular from this inequality it is clear that we can choose $\sigma_h$ so that $|\widetilde E_h|=|E|$; this implies that $\sigma_h\to 0$.
 
By Lemma~\ref{lm:v} we have 
 \beq\label{Lambda-min}
 \biggl|\int_\T|\nabla v_{E_h}|^2\, dx-\int_\T|\nabla v_{\widetilde E_h}|^2\, dx\biggr|\leq C_0|\widetilde E_h\triangle E_h|\,.
 \eeq
 
 Let us now estimate $|\widetilde E_h\triangle E_h|$.
 To this aim observe that if $f\in C^1(\T)$
\begin{align}\label{come}
 \int_\T|f(\Phi_h^{-1}(x))&-f(x)|\, dx \leq \int_\T\int_0^1|\nabla f(tx+(1-t)\Phi_h^{-1}(x))||\Phi_h^{-1}(x)-x|\, dtdx\nonumber\\
 &\leq c\sigma_h
\int_0^1 \int_{B_r}|\nabla f(tx+(1-t)\Phi_h^{-1}(x))|\,dxdt\leq c\sigma_h\int_{B_r}|\nabla f(y)|\, dy\,,
\end{align}
 where the last inequality is obtained by a change of variables. By approximation we deduce
\beq\label{difsim}
  |\widetilde E_h\triangle E_h|=\int_\T|\chi_{E_h}(\Phi_h^{-1}(x))-\chi_{E_h}(x)|\, dx\leq C_3\sigma_hP_{B_r}(E_h)\,.
\eeq
 Notice that, in particular, since $\sigma_h\to 0$ for $h$ sufficiently large  we have that $\alpha(\widetilde E_h, E)\leq \delta$.
 Combining \eqref{EFper}, \eqref{EFvol}, \eqref{Lambda-min}, and \eqref{difsim} we conclude that for $h$ sufficiently large
 $$
 J_{\lambda_h}(\widetilde E_h)\leq J_{\lambda_h}(E_h)+\sigma_h\bigl[(2^NN+\gamma C_0C_3)P_{B_r}(E_h)-\lambda_hr^NC_1\bigr]
 <J_{\lambda_h}(E_h)\,,
$$
a contradiction to the minimality of $E_h$.
 \end{proof}
 As a consequence of two  previous results we recover the following regularity result which was proved first in \cite{ST}.
  \begin{theorem}\label{th:regularity}
 Let $E$ be a local minimizer for \eqref{J}. Then $E$ is an $\omega$-minimizer for the area functional. Moreover, the reduced boundary $\partial^*E$ is a $C^{3,\alpha}$ manifold for all $\alpha<1$ and  the Hausdorff dimension of the singular set satisfies
  ${\rm dim}_{\mathcal H}(\partial E\setminus \partial^*E)\leq N-8$. 
 \end{theorem}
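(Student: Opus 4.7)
The plan is to derive $\omega$-minimality from Proposition~\ref{prop:EF} and Lemma~\ref{lm:v}, and then invoke the classical regularity theory for almost-minimizers of the perimeter, with an elliptic bootstrap via the Euler–Lagrange equation to reach $C^{3,\alpha}$.

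First, by Proposition~\ref{prop:EF}, there is $\lambda>0$ such that $E$ minimizes $F\mapsto J(F)+\lambda||F|-|E||$ among all $F$ with $\alpha(E,F)\leq\delta/2$. Fix any ball $B_r(x_0)\subset\T$ with $r$ so small that $\omega_N r^N\leq \delta/2$, and any competitor $F$ with $E\triangle F\CC B_r(x_0)$. Then $\alpha(E,F)\leq |E\triangle F|\leq \omega_N r^N\leq \delta/2$, so $F$ is admissible in the penalized problem. Comparing $J_\lambda(E)\leq J_\lambda(F)$ and using Lemma~\ref{lm:v} to estimate
$$
\Bigl|\int_\T|\nabla v_E|^2\,dx-\int_\T|\nabla v_F|^2\,dx\Bigr|\leq C|E\triangle F|\leq C\omega_N r^N,
$$
together with $\lambda|E\triangle F|\leq\lambda\omega_N r^N$, yields
$$
P_\T(E)\leq P_\T(F)+(C\gamma+\lambda)\omega_N r^N.
$$
For $r$ bounded away from $0$, the bound $\omega r^N\geq P_\T(E)$ holds trivially once $\omega$ is chosen large enough. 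This gives the $\omega$-minimality in the sense of Definition~\ref{def:omegamin}.

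Second, I would invoke the classical regularity theory for $\omega$-minimizers of the area functional, as developed by Tamanini and collected for example in \cite{AFP}: an $\omega$-minimizer in $\T$ has reduced boundary of class $C^{1,1/2}$, and the Federer–De~Giorgi dimension reduction argument applied to tangent cones gives $\dim_{\mathcal H}(\partial E\setminus\partial^*E)\leq N-8$. This simultaneously establishes the claim on the singular set and a first regularity rung $C^{1,\alpha}$, $\alpha<1/2$, on $\partial^*E$.

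Third, I would bootstrap to $C^{3,\alpha}$ exactly as in Remark~\ref{rem:critic}. Once $\partial^*E$ is $C^{1,\alpha}$, the first variation of $J$ under volume-preserving deformations produces the Euler–Lagrange equation \eqref{ELJ} on $\partial^*E$ in the weak sense, so that $E$ is a regular critical set in the sense of Definition~\ref{def:noncianome}. Since $v_E\in W^{2,p}(\T)$ for every $p$ by Remark~\ref{rem:univ}, the relation $H_{\partial E}=\lambda-4\gamma v_E$ expresses the mean curvature of $\partial^*E$ as a function of Hölder class $C^{0,\alpha}$ for every $\alpha<1$; by the standard regularity result for prescribed-mean-curvature surfaces (\cite[Theorem 7.57]{AFP}) we lift $\partial^*E$ to $C^{1,\alpha}$ for every $\alpha<1$, and then Schauder theory applied iteratively to $-\Delta v_E=u_E-m$ (which now jumps across a surface of increasing regularity) upgrades $\partial^*E$ to $C^{3,\alpha}$ for every $\alpha<1$.

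The main obstacle is essentially bookkeeping, namely verifying that the admissibility constraint $\alpha(E,F)\leq\delta/2$ in Proposition~\ref{prop:EF} is met by \emph{local} perturbations inside a small ball; everything else consists in applying Lemma~\ref{lm:v} and standard regularity results, the latter being the same machinery already used in Remark~\ref{rem:critic}.
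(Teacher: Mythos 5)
Your proof is correct and follows essentially the same route as the paper: use Proposition~\ref{prop:EF} and Lemma~\ref{lm:v} to obtain $\omega$-minimality, then appeal to the classical almost-minimizer regularity theory for $C^{1,1/2}$ and the dimension bound on the singular set, and finally bootstrap via the Euler--Lagrange equation and elliptic theory as in Remark~\ref{rem:critic}. The only place where you add something the paper glosses over is the observation that for balls with $\omega_N r^N>\delta/2$ the $\omega$-minimality inequality is trivial once $\omega$ is chosen large; this is a genuine (if minor) bookkeeping point, and your way of handling it is fine.
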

 \begin{proof}
 We start by showing that $E$ is an $\omega$-minimizer for the area functional for a suitable $\omega>0$. To this aim 
 fix any ball $B_r\subset \T$ such that $\omega_Nr^N\leq\delta/2$, where $\delta$ is like in Definition~\ref{def:locmin}.
 Using Proposition~\ref{prop:EF}, we may find $\lambda>0$ such that $E$ minimizes $J_\lambda$ among all $F\subset\T$ with
 $\alpha(E,F)\leq\delta/2$. Therefore, if $F$ is any set of finite perimeter coinciding with $E$ outside $B_r(x_0)$, using an estimate
 similar to \eqref{Lambda-min}, we have 
 \begin{align*}
 P_{B_r}(E)-P_{B_r}(F)&=J_\lambda(E)- J_\lambda(F)+
 \gamma \int_\T\bigl(|\nabla v_F|^2-|\nabla v_E|^2\bigr)\, dx+\lambda||F|-|E||\\
 &\leq \gamma C_0|E\triangle F|+\lambda||F|-|E||\leq (\gamma C_0+\lambda)\omega_Nr^N\,.
 \end{align*}
 This shows that $E$ is an $\omega$-minimizer with $\omega:=(\gamma C_0+\lambda)\omega_N$. 
 By classical regularity results (see \cite[Theorem 1]{Ta}), it follows that  $\partial^*E$ is a $C^{1,\frac12}$-manifold and
  ${\rm dim}_{\mathcal H}(\partial E\setminus \partial^*E)\leq N-8$.
  The  $C^{3,\alpha}$ regularity then follows from Remark~\ref{rem:critic}.
 \end{proof}
 \begin{remark}\label{rm:noncianome}
 Observe that the $C^{3,\alpha}$  regularity follows only from the equation. Hence, in view of Remark~\ref{rem:critic} it holds for regular critical sets.
 \end{remark}
\section{Second variation and $W^{2,p}$-local minimality}\label{minlocsecvar}

Let $E\subset\T$ be of class $C^2$ and $X:\T\to\T$ a $C^2$-vector field, and consider the associated flow 
$\Phi:\T\times(-1,1)\to\T$ defined by
$\displaystyle\frac{\pa \Phi}{\pa t}=X(\Phi)$, $\Phi(x, 0)=x.$
 We define the \emph{second variation of $J$ at $E$} with respect
 to the   flow $\Phi$ to be the value 
 $$
 \frac{d^2 }{dt^2}J(E_t)_{\bigl|t=0}\,,
 $$
 where $E_t:=\Phi(\cdot, t)(E)$.
 
 Throughout the section, when no confusion is possible, we shall omit the indication of $E$, writing $v$ instead of $v_E$, $\nu$ instead of $\nu^E$, and denoting by $d$ the signed distance from the boundary of $E$.
 
Before stating the representation formula for the second variation, we fix some notation. Given a vector $X$, its tangential part on $\pa E$ is defined as $X_\tau:=X-(X\cdot \nu)\nu$. In particular, we will denote by $D_\tau$ the tangential gradient operator given by 
$D_\tau\vphi:=(D\vphi)_\tau$.   We  also recall that  the second fundamental form
$B_{\pa E}$ of $\pa E$ is given by $D_\tau\nu$ and that  the square $|B_{\pa E}|^2$  of its  Euclidean norm coincides with
the  the sum of the squares of the principal curvatures of $\pa E$. 

\begin{theorem}\label{th:J''}
If $E$, $X$, and $\Phi$ are as above, we have
\begin{equation}\label{eq:J''}
\begin{split}
\frac{d^2 }{dt^2}J(E_t)_{\bigl|t=0}& = \int_{\pa E}\Bigl(|D_\tau(X\cdot \nu)|^2-|B_{\pa E}|^2(X\cdot \nu)^2\Bigr)\, d\H \\
& \quad+8\gamma \int_{\pa E}  \int_{\pa E}G(x,y)\bigl((X\cdot \nu)(x)\bigr)\bigl((X\cdot \nu)(y)\bigr)d\H(x)\,d\H(y)\\
& \quad+4\gamma\int_{\pa E}\partial_{\nu}v(X\cdot \nu)^2\,d\H
-\int_{\pa E}(4\gamma v+H_{\pa E})\Div_{\tau}\bigl(X_\tau(X\cdot\nu)\bigr)\, d\H\\
& \quad+\int_{\pa E}(4\gamma v+H_{\pa E})(\Div X)(X\cdot\nu)\, d\H\,.
\end{split}
\end{equation}
\end{theorem}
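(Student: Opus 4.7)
The plan is to split $J(E_t)=P_\T(E_t)+\gamma\mathcal N(E_t)$, where $\mathcal N(E):=\int_\T|\nabla v_E|^2\,dx$, and compute the second derivative of each piece separately along the flow $\Phi$. Throughout I set $\zeta:=X\cdot\nu$ on $\pa E$. For the perimeter part I would use the classical second variation formula of the area functional along a general (not necessarily normal) flow, obtained by expanding the tangential Jacobian of $\Phi_t$ to second order via the Gauss--Weingarten relations (cf.\ \cite{CS}). The outcome is
\begin{equation*}
\frac{d^2}{dt^2}P_\T(E_t)\bigg|_{t=0}=\int_{\pa E}\bigl(|D_\tau\zeta|^2-|B_{\pa E}|^2\zeta^2\bigr)\,d\H-\int_{\pa E}H_{\pa E}\Div_\tau(X_\tau\zeta)\,d\H+\int_{\pa E}H_{\pa E}(\Div X)\zeta\,d\H,
\end{equation*}
where the last two integrals encode the correction produced by the tangential component of $X$ (they vanish if $X$ is purely normal).

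For the nonlocal part I would first rewrite the functional as a single volume integral: by \eqref{G1} and $\int_\T v_{E_t}\,dx=0$, one has $\mathcal N(E_t)=2\int_{E_t}v_{E_t}\,dx$. Distributional differentiation in $t$ of the Poisson equation for $v_{E_t}$ shows that the Eulerian derivative $\dot v:=(d v_{E_t}/dt)|_{t=0}$ solves $-\Delta\dot v=2\zeta\,\H\lfloor_{\pa E}$ with zero mean, so that
\begin{equation*}
\dot v(x)=2\int_{\pa E}G(x,y)\zeta(y)\,d\H(y).
\end{equation*}
A first integration by parts yields $2\int_E\dot v\,dx=2\int_{\pa E}v\zeta\,d\H$, whence $(d/dt)\mathcal N(E_t)=4\int_{\pa E_t}v_{E_t}\zeta_t\,d\H$. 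To avoid the delicate task of differentiating surface measures and the moving normal $\nu_t$, I would then recast the boundary integral as a bulk one via the divergence theorem,
\begin{equation*}
\int_{\pa E_t}v_{E_t}\zeta_t\,d\H=\int_{E_t}\bigl(\nabla v_{E_t}\cdot X+v_{E_t}\Div X\bigr)\,dx,
\end{equation*}
and differentiate once more at $t=0$. Using $\int_E\Div(\dot v\,X)\,dx=\int_{\pa E}\dot v\,\zeta\,d\H$ together with the representation of $\dot v$ above gives
\begin{equation*}
\frac{1}{4}\frac{d^2}{dt^2}\mathcal N(E_t)\bigg|_{t=0}=2\int_{\pa E}\!\int_{\pa E}G(x,y)\zeta(x)\zeta(y)\,d\H(x)\,d\H(y)+\int_{\pa E}(X\cdot\nabla v)\zeta\,d\H+\int_{\pa E}v(\Div X)\zeta\,d\H.
\end{equation*}

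To recognise the claimed form, I split $X=X_\tau+\zeta\nu$ so that $X\cdot\nabla v=X_\tau\cdot\nabla_\tau v+\zeta\,\partial_\nu v$, and apply the tangential divergence theorem on the closed hypersurface $\pa E$,
\begin{equation*}
\int_{\pa E}X_\tau\cdot\nabla_\tau v\,\zeta\,d\H=-\int_{\pa E}v\,\Div_\tau(X_\tau\zeta)\,d\H.
\end{equation*}
Summing the perimeter and $\gamma\mathcal N$ contributions, the $H_{\pa E}$ and $4\gamma v$ terms coalesce into the factor $(4\gamma v+H_{\pa E})$ of the last two lines of \eqref{eq:J''}, while the first three lines appear exactly once. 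The main technical obstacle is the perimeter expansion cited at the start: one has to track the motion of the normal $\nu_t$ and the tangential part of $X$ simultaneously, which is where the Gauss--Weingarten identities are essential; all remaining steps rely only on integration by parts and the PDE characterisation of $v_{E_t}$.
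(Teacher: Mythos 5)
Your overall strategy mirrors the paper's: split $J(E_t)$ into the perimeter and the nonlocal piece, differentiate each twice along the flow, and combine via integration by parts. For the nonlocal part you do something slightly different and, if anything, cleaner than the paper. Whereas the paper quotes $F''(0)$ directly from \cite[eq.\ (2.67)]{CS} and then expands $\Div(vX)$, you rederive $F''(0)$ from first principles, via the identity $\mathcal N(E_t)=2\int_{E_t}v_{E_t}\,dx$, the PDE for $\dot v$, and the transport theorem; the final tangential integration by parts $\int_{\pa E}X_\tau\cdot D_\tau v\,\zeta\,d\H=-\int_{\pa E}v\,\Div_\tau(X_\tau\zeta)\,d\H$ is exactly \eqref{J''1}. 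That part is correct and self-contained (modulo the observation that $m_t$ changes along a non-volume-preserving flow, so the correct equation for $\dot v$ has an extra constant $-\dot m$ on the right-hand side; this drops out by $\int_\T G(x,\cdot)\,dy=0$, so your representation formula for $\dot v$ is still right).

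The gap is in the perimeter part. The formula you start from,
\begin{equation*}
\frac{d^2}{dt^2}P_\T(E_t)\Big|_{t=0}=\int_{\pa E}\bigl(|D_\tau\zeta|^2-|B_{\pa E}|^2\zeta^2\bigr)\,d\H-\int_{\pa E}H_{\pa E}\Div_\tau(X_\tau\zeta)\,d\H+\int_{\pa E}H_{\pa E}(\Div X)\zeta\,d\H\,,
\end{equation*}
is not a classical formula, and the reference to \cite{CS} does not support it: as the authors themselves note in Remark~\ref{NonSiamoCS}, \cite{CS} only treats critical sets, in which case the two $H_{\pa E}$-weighted integrals are absorbed and disappear. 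For a general (non-critical) set and a general (non-normal) vector field, what one obtains directly from the Jacobian expansion of $\Phi_t$ is a formula involving the acceleration $Z=DX[X]$ and the quadratic form $B_{\pa E}[X_\tau,X_\tau]$ (the expression quoted from \cite{CMM} as \eqref{E''}). Getting from that to your clean form is precisely the pointwise algebraic identity \eqref{claimJ''},
\begin{equation*}
H(X\cdot\nu)^2+Z\cdot\nu-2X_\tau\cdot D_\tau(X\cdot\nu)+B_{\pa E}[X_\tau,X_\tau]=(X\cdot\nu)\Div X-\Div_\tau\bigl(X_\tau(X\cdot\nu)\bigr)\,,
\end{equation*}
which is the crux of the paper's proof and is not verified (or even stated) in your plan. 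Your proposal would be complete once you either prove this identity or derive the stated perimeter expansion directly; as written, the step you label as the ``classical'' formula is exactly what must be established.
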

The proof of the theorem is given in the Appendix. 
\begin{remark}\label{NonSiamoCS}
In the case of a critical set $E$ the computation of the second variation was carried out in \cite{CS}. The novelty here is that we deal 
with a general regular set. This explains the presence of the last two terms in the formula.
\end{remark}
\begin{remark}\label{nn}
Notice that if $E$ is also critical, from \eqref{ELJ} it follows that 
$$
\int_{\pa E}(4\gamma v+H_{\pa E})\Div_\tau\bigl(X_\tau(X\cdot\nu)\bigr)\, d\H=0\,.
$$ 
Moreover, if in addition  
\beq\label{recente2}
|\Phi(\cdot,t)(E)|=|E| \qquad\text{for all $t\in [0,1]$},
\eeq
 then  it can be shown (see \cite[equation (2.30)]{CS}) that
$$
0=\frac{d^2 }{dt^2}|E_t|_{\bigl|t=0}=\int_{\pa E}(\Div X)(X\cdot\nu)\, d\H\,.
$$
Hence, again from \eqref{ELJ}, we have
\begin{equation}\label{nn1}
\begin{split}
\frac{d^2 }{dt^2}J(E_t)_{\bigl|t=0}& = \int_{\pa E}\Bigl(|D_\tau(X\cdot \nu)|^2-|B_{\pa E}|^2(X\cdot \nu)^2\Bigr)\, d\H \\
&\quad +8\gamma \int_{\pa E}  \int_{\pa E}G(x,y)\bigl((X\cdot \nu)(x)\bigr)\bigl((X\cdot \nu)(y)\bigr)d\H(x)\,d\H(y)\\
&\quad  +4\gamma\int_{\pa E}\partial_{\nu}v(X\cdot \nu)^2\,d\H\,.
\end{split}
\end{equation}
Note that this formula coincides exactly with the one given in \cite[equation (2.20)]{CS}, where it was obtained using a particular family of asymptotically volume preserving diffeomorphisms.
\end{remark}
The previous remark motivates the following definition. Given any sufficiently smooth open set $E\subset\T$ we denote by $\Ht(\pa E)$ the set of all functions $\varphi\in H^1(\pa E)$ such that $\int_{\pa E}\varphi\,d\H=0$, endowed with the norm
$\|\nabla \vphi\|_{L^2(\pa E)}$. To  $E$ we then associate the quadratic form $\pa^2J(E):\Ht(\pa E)\to\R$ defined as
\begin{equation}\label{pa2J}
\begin{split}
\pa^2J(E)[\varphi]& = \int_{\pa E}\bigl(|D_\tau\varphi|^2-|B_{\pa E}|^2\varphi^2\bigr)\, d\H \\
& \quad+8\gamma \int_{\pa E}  \int_{\pa E}G(x,y)\varphi(x)\varphi(y)d\H(x)\,d\H(y)
 +4\gamma\int_{\pa E}\partial_{\nu}v\varphi^2\,d\H\,.
\end{split}
\end{equation}
If $E$ is a regular critical set and the flow $\Phi$ satisfies \eqref{recente2}, then 
$$
\frac{d|E_t|}{dt}\Bigr|_{t=0}=\int_{\pa E}X\cdot \nu\, d\H=0\,.
$$
Hence, $\pa^2J(E)[X\cdot\nu]$ coincides with the second variation of $J$ at $E$ with respect to $\Phi$.

Notice that, setting $\mu:=\varphi\H\lfloor\pa E$, the nonlocal term
$$
\int_{\pa E} \int_{\pa E}G(x,y)\varphi(x)\varphi(y)d\H(x)\,d\H(y)
$$
can be rewritten as
\beq\label{nonloc}
 \int_{\T} \int_{\T}G(x,y)d\mu(x)\,d\mu(y)=\int_\T|\nabla z|^2\,dx\,,
\eeq
where $z\in H^1(\T)$ is a weak solution to the equation
$$
-\Delta z=\mu\qquad\text{in}\,\,\T\,.
$$
Thus the nonlocal term \eqref{nonloc}  is equivalent to the square of the $H^{-1}$-norm of the measure $\mu$.
\par
As a consequence of Remark~\ref{nn}  we have the following Corollary.
\begin{corollary}\label{co:J''}
Let $E$ be a regular local minimizer of $J$ according to {\rm Definition~\ref{def:locmin}}.Then
$$
\pa^2J(E)[\varphi]\geq0\qquad\qquad\text{for all $\varphi\in\Ht(\pa E)$.}
$$
\end{corollary}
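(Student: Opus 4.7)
The plan is to reduce the inequality to the standard fact that at a local minimum of a smooth one-parameter family the second derivative is nonnegative, applied to the family $t \mapsto J(E_t)$ arising from a volume-preserving flow whose initial normal velocity on $\partial E$ equals the test function $\varphi$. By Remark \ref{nn}, the value of this second derivative at $t=0$ is exactly $\partial^2 J(E)[\varphi]$ provided the flow preserves $|E|$, so the only real work is the construction of such a flow together with a density argument.

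First I would reduce to the case $\varphi \in C^\infty(\partial E)$. Since $\partial E$ is $C^{3,\alpha}$ by Remark~\ref{rem:critic}, smooth functions with zero mean are dense in $\Ht(\partial E)$ in the $H^1$-norm, and the quadratic form $\partial^2 J(E)$ is continuous with respect to this norm (the tangential-derivative term is continuous by definition; the curvature term by $L^\infty$-boundedness of $|B_{\partial E}|^2$ and $\partial_\nu v$, the latter being controlled by Remark~\ref{rem:univ}; and the nonlocal term by the representation \eqref{nonloc} together with continuity of the trace $H^1(\T) \hookrightarrow H^{1/2}(\partial E)$). So it suffices to prove the inequality for smooth $\varphi$.

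Next I would construct the volume-preserving flow. Extend $\varphi$ to a $C^\infty$ function $\tilde \varphi$ in a tubular neighborhood of $\partial E$ via the signed distance projection, and extend $\nu = \nabla d$ in the same neighborhood, then multiply by a cut-off to globalize on $\T$. Fix a smooth function $\eta \in C^\infty(\partial E)$ with $\int_{\partial E} \eta \, d\H = 1$ and extend it similarly. For a scalar parameter $\lambda$ and time $t$, consider the autonomous vector field
\[
X_\lambda(x) := \bigl(\tilde \varphi(x) + \lambda \, \tilde \eta(x)\bigr)\, \nu(x)
\]
and let $\Phi^\lambda(\cdot, t)$ be its flow. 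Set $V(\lambda, t) := |\Phi^\lambda(\cdot,t)(E)|$. Then $V(0,0) = |E|$ and $\partial_t V(0,0) = \int_{\partial E} \varphi \, d\H = 0$, while $\partial_\lambda \partial_t V(0,0) = \int_{\partial E} \eta \, d\H = 1 \ne 0$. Hence the implicit function theorem yields a $C^2$ function $\lambda(t)$ with $\lambda(0)=\lambda'(0)=0$ such that $V(\lambda(t),t) \equiv |E|$ for $|t|$ small. Let $\Phi(x,t) := \Phi^{\lambda(t)}(x,t)$ and $E_t := \Phi(\cdot,t)(E)$; by construction $|E_t| = |E|$ and the initial velocity is $X = (\tilde\varphi + \lambda(0)\tilde\eta)\nu = \varphi\nu$ on $\partial E$, so $X \cdot \nu = \varphi$ there.

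Finally I would conclude. Since $\Phi(\cdot,t) \to \mathrm{Id}$ uniformly and $|E_t \triangle E| \to 0$, we have $\alpha(E,E_t) < \delta$ for $|t|$ small, and hence $J(E_t) \geq J(E)$ by Definition \ref{def:locmin}. Consequently the $C^2$ function $t \mapsto J(E_t)$ has a local minimum at $t=0$, so $\frac{d^2}{dt^2} J(E_t)|_{t=0} \geq 0$. Because the flow is volume preserving and $E$ is critical, Remark \ref{nn} gives
\[
\frac{d^2}{dt^2} J(E_t)\Big|_{t=0} = \partial^2 J(E)[\varphi],
\]
proving the corollary for smooth $\varphi$ and hence, by density, for all $\varphi \in \Ht(\partial E)$. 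The main technical obstacle is the construction step: ensuring the perturbation gives a genuinely volume-preserving $C^2$ family whose initial normal velocity is exactly $\varphi$ (rather than only approximately so), which is why I route it through the implicit function theorem rather than a naive renormalization of $\varphi$.
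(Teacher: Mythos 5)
Your overall strategy --- reduce to smooth $\varphi$, build a volume-preserving flow with initial normal velocity $\varphi$, and invoke Remark~\ref{nn} together with the sign of the second derivative at a local minimum --- is the right framework and is close in spirit to the paper's proof. However, there are two related gaps in the flow construction.

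\textbf{The implicit function theorem step is degenerate, and the stated conclusion $\lambda'(0)=0$ is false.} Since $\Phi^\lambda(x,0)=x$ for \emph{every} $\lambda$, we have $V(\lambda,0)\equiv|E|$, so $\partial_\lambda V(0,0)=0$; the standard IFT applied to $V(\lambda,t)-|E|=0$ therefore fails at $(0,0)$. The natural fix is the Hadamard trick: write $V(\lambda,t)-|E|=t\,M(\lambda,t)$ with $M(\lambda,0)=\partial_tV(\lambda,0)=\int_{\partial E}(\varphi+\lambda\eta)\,d\H=\lambda$, apply IFT to $M=0$ using $\partial_\lambda M(0,0)=1$, and get $\lambda(t)$ with $\lambda(0)=0$. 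But then $\lambda'(0)=-\partial_t M(0,0)=-\tfrac12\,\partial_t^2V(0,0)=-\tfrac12\int_{\partial E}(\Div X_0)(X_0\cdot\nu)\,d\H$, and for $X_0=\tilde\varphi\nu$ with a normal extension this equals $-\tfrac12\int_{\partial E}H_{\partial E}\varphi^2\,d\H$, which is generically nonzero.

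\textbf{Consequently, Remark~\ref{nn} does not apply as you invoke it.} Theorem~\ref{th:J''} and Remark~\ref{nn} are proved for the flow of a \emph{fixed autonomous} vector field $X$, where the acceleration $Z:=\partial_t^2\Phi(x,0)$ is automatically $DX[X]$, and this is used explicitly in the proof (see \eqref{claimJ''}). Your composite family $\Phi(x,t)=\Phi^{\lambda(t)}(x,t)$ has
\[
\partial_t^2\Phi(x,0)=DX_0[X_0](x)+2\lambda'(0)\,\tilde\eta(x)\nu(x),
\]
so once $\lambda'(0)\neq0$ the $2$-jet at $t=0$ no longer matches that of the autonomous flow of $X_0$, and the identity $\tfrac{d^2}{dt^2}J(E_t)\big|_{t=0}=\partial^2J(E)[\varphi]$ requires a separate derivation for non-autonomous (or merely asymptotically volume-preserving) flows in which the $Z$-dependent contributions are shown to cancel against the Lagrange multiplier term via $\tfrac{d^2}{dt^2}|E_t|\big|_{t=0}=0$. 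This cancellation is standard for constrained second variations, but it is not what the paper establishes, and as written your proof silently assumes it.

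By contrast, the paper avoids both issues by choosing the extension $X:=\nabla u$ with $\Delta u=0$ off $\partial E$ and $\partial_\nu u=\varphi$ on $\partial E$ (solvable precisely because $\int_{\partial E}\varphi\,d\H=0$). Then $\Div X=0$ distributionally, and after mollification one gets a genuine autonomous divergence-free $C^\infty$ field whose flow preserves volume for free, so Remark~\ref{nn} applies directly; one then passes to the limit in $\e$ and concludes by density. Your approach can be repaired by either adopting this divergence-free construction, or by proving the needed second-variation identity for the non-autonomous volume-preserving family, but as stated the argument has a genuine gap at the ``By Remark~\ref{nn}'' step.
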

\begin{proof}
Let $\varphi\in C^\infty\cap \Ht(\pa E)$. We set   $X:=\nabla u$ where $u$ solves
$$
\begin{cases}
\Delta u=0 & \text{in $\T\setminus \pa E$,}\\
\pa_\nu u=\vphi & \text{on $\pa E$,}
\end{cases}
$$
where $\nu$ is the outer normal to $\pa E$. Note that $\Div X=0$ and, by elliptic regularity and recalling Remark~\ref{rem:critic},  $X\cdot \nu$ is of class $C^{2,\alpha}$ separately 
in $\bar E\cap \mathcal{N}(\pa E)$ and $\overline{\T\setminus E}\cap \mathcal{N}(\pa E)$ and globally Lipschitz continuous in $ \mathcal{N}(\pa E)$
 for a suitable tubular neigborhood $\mathcal{N}(\pa E)$ of $\pa E$. Here by $\nu$ we denote a $C^{2,\alpha}$ extension of the outer unit normal field $\nu^E$ from $\pa E$ to $\mathcal{N}(\pa E)$. With a slight abuse of notation we still denote by $D_{\tau}$ the 
 extension of the tangential gradient on $\pa E$ given by $D_{\tau}:=D-\nu\pa_\nu$ in $\mathcal{N}(\pa E)$.  Observe that $D_\tau(X\cdot\nu)$ is continuous in $\mathcal{N}(\pa E)$.
We now set 
$$
X_\e(x):=\int_\T\rho_\e(z)X(x+z)\, dz\,,
$$
where $\rho_\e$ is the standard mollifier. Notice that $\Div X_\e=0$. Hence, the associated flow is volume preserving and by the local minimality together with \eqref{nn1} we have 
$\pa^2J(E)[\vphi_\e]\geq 0$, where $\vphi_\e:=X_\e\cdot \nu$. 
We claim that $\vphi_\e\to \vphi$ in $C^1(\pa E)$. Indeed, observing that we can write
$$
(X_\e\cdot \nu)(x)=(X\cdot \nu)_\e(x)-
\int_\T\rho_\e(z)X(x+z)\cdot[\nu(x+z)-\nu(x)]\, dz=:
(X\cdot \nu)_\e(x)-R_\e(x)
$$
and recalling that $X\cdot\nu$ is continuous in $\mathcal N(\pa E)$, one easily gets that $X_\e\cdot \nu\to X\cdot \nu$ uniformly in  $\mathcal N(\pa E)$. In
particular, $\vphi_\e\to \vphi$ uniformly on $\pa E$. To show that 
$D_\tau (X_\e\cdot \nu)\to D_\tau (X\cdot \nu)$ uniformly in  $\mathcal N(\pa E)$, it is enough to check (by a lengthy but straightforward computation)   that $\nabla R_\e\to 0$ 
 uniformly in $\mathcal N(\pa E)$. Hence, the claim follows recalling the continuity of  $D_\tau (X\cdot \nu)$. It is now easy to check that the claim implies that $\pa ^2J(E)[\vphi]=\lim_\e\pa ^2J(E)[\vphi_\e]\geq 0$. If  now $\vphi$ is any function in $\Ht(\pa E)$, we construct  a
sequence $\vphi_n$ of functions in $C^{\infty}(\pa E)\cap \Ht(\pa E)$ such that $\vphi_n\to \vphi$ in $H^1(\pa E)$. Then the conclusion follows
by observing that all the terms appearing in the expression of $\pa^2J( E)$ are continuous with respect to the $H^1$-convergence.
\end{proof}

We now switch to the search for a sufficient condition for local minimality. Observe that if $E\subset\T$ is of class $C^2$ and $\Phi(x,t)=x+t\eta e_i$ for some $\eta\in\R$ and some element $e_i$ of the canonical basis in $\R^N$, we clearly have $J\bigl(\Phi(\cdot,t)(E)\bigr)= J(E)$, by the translation invariance of $J$. Hence,
$$
\frac{d^2 }{dt^2}J(E_t)_{\bigl|t=0}=\pa^2J(E)[\eta\nu_i]=0\,.
$$
In view of this it is convenient to introduce the  subspace $T(\pa E)\subset \Ht(\pa E)$ generated by the functions
$\nu_i$, $i=1,\dots, N$. Note that we can then write 
\beq\label{tperp}
\Ht(\pa E)=T^\perp(\pa E)\otimes T(\pa E)\,,
\eeq
where 
$$
T^\perp(\pa E):=\Bigl\{\vphi\in\Ht(\pa E):\, \int_{\pa E}\vphi\nu_i\, d\H=0\,, i=1,\dots, N\Bigr \}
$$
is the orthogonal set, in the $L^2$-sense, to the space of infinitesimal translations $T(\pa E)$.

We observe that there exists an orthonormal frame $\{\e_1, \dots, \e_N\}$ such that 
\beq\label{giro}
\int_{\pa E}(\nu\cdot \e_i)(\nu\cdot \e_j)\, d\H=0 \qquad\text{for all $i\neq j$.}
\eeq
The existence of such orthonormal frame can be proved by observing that, denoting by $A$ the matrix with coefficients 
$a_{ij}:=\int_{\pa E}\nu_i\nu_j\, d\H$, we have for every $O\in SO(N)$
$$
\int_{\pa E}(O\nu)_i(O\nu)_j\, d\H=\bigl(OAO^{-1}\bigr)_{ij}\,.
$$
Choose $O$ so that $OAO^{-1}$ is diagonal and set $\e_i=O^{-1}e_i$. In view of this remark, the functions
$\nu\cdot\e_i$ are orthogonal and generate   $T(\pa E)$. Notice however that the dimension of $T(\pa E)$ can be strictly smaller than $N$, since it may happen that $\nu\cdot\e_i=0$ for some $i$, as in the case when $E$ is translation invariant  along some direction. Therefore, given $\vphi\in \Ht(\pa E)$, its projection on $T^\perp(\pa E)$ is 
\beq\label{projtperp}
\pi_{T^\perp(\pa E)}(\vphi)=\vphi-\sum\left(\int_{\pa E}\vphi \nu\cdot \e_i\, d\H\right)\frac{\nu\cdot \e_i}{\|\nu\cdot \e_i\|^2_2}\,,
\eeq
where it is understood that the sum runs over all indices $i$ such that $\|\nu\cdot \e_i\|_{L^2(\pa E)}\neq 0$.
\begin{definition}\label{def:psv}
In the following we say that the functional $J$ has \emph{positive second variation} at the critical set $E$ if 
$$
\pa^2J(E)[\vphi]>0 \qquad \text{for all $\vphi\in \Ht(\pa E)\setminus T(\pa E)$}
$$
or, equivalently, for all $\vphi\in T^\perp(\pa E)\setminus\{0\}$.
\end{definition}
\begin{lemma}\label{coerc}
Assume that  $J$ has positive second variation at the critical set $E$. Then 
\beq\label{c0}
m_0:=\inf\Bigl\{\pa^2J(E)[\vphi]:\, \vphi\in T^\perp(\pa E)\,, \|\vphi\|_{H^1}=1\Bigr\}>0\,,
\eeq
and 
$$
\pa^2J(E)[\vphi]\geq m_0 \|\vphi\|^2_{H^1} \qquad \text{for all $\vphi\in T^\perp(\pa E)$.}
$$
\end{lemma}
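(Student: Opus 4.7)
The plan is to prove $m_0>0$ by a standard compactness/contradiction argument, and then to deduce the quadratic bound from the $2$-homogeneity of $\pa^2J(E)$.

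Suppose by contradiction that $m_0=0$, and choose $\vphi_n\in T^\perp(\pa E)$ with $\|\vphi_n\|_{H^1}=1$ and $\pa^2J(E)[\vphi_n]\to 0$. Since $\pa E$ is smooth and compact by Remark~\ref{rem:critic}, the embedding $H^1(\pa E)\hookrightarrow L^2(\pa E)$ is compact; hence, up to a subsequence, $\vphi_n\wto\vphi$ in $H^1(\pa E)$ and $\vphi_n\to\vphi$ in $L^2(\pa E)$. The zero-mean condition together with the orthogonality conditions $\int_{\pa E}\vphi_n\,\nu\cdot\e_i\,d\H=0$ are continuous under $L^2$-convergence, so the limit still satisfies $\vphi\in T^\perp(\pa E)$. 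I then pass to the limit in each of the three terms of \eqref{pa2J}. The Dirichlet term $\int_{\pa E}|D_\tau\vphi_n|^2\,d\H$ is weakly lower semicontinuous. The zero-order contributions $\int_{\pa E}|B_{\pa E}|^2\vphi_n^2\,d\H$ and $\int_{\pa E}\pa_\nu v\,\vphi_n^2\,d\H$ converge to their counterparts with $\vphi$ by strong $L^2$-convergence, since both $|B_{\pa E}|^2$ and $\pa_\nu v$ are bounded (the former by $C^{3,\alpha}$-regularity of $\pa E$ via Remark~\ref{rem:critic}, the latter by Remark~\ref{rem:univ} together with Sobolev embedding). For the nonlocal term I would invoke the reformulation \eqref{nonloc}: the double integral equals $\int_\T|\nabla z_n|^2\,dx$, where $-\Delta z_n=\vphi_n\H\lfloor\pa E$ in $\T$. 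The $L^2(\pa E)$-convergence $\vphi_n\to\vphi$ implies convergence of the corresponding measures in $H^{-1}(\T)$, whence $z_n\to z$ in $H^1(\T)$, so this term passes to the limit. Combining these observations,
$$
0\;=\;\lim_{n}\pa^2J(E)[\vphi_n]\;\geq\;\pa^2J(E)[\vphi].
$$

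Two cases arise. If $\vphi\neq 0$, then $\vphi\in T^\perp(\pa E)\setminus\{0\}$ and the positivity hypothesis gives $\pa^2J(E)[\vphi]>0$, contradicting the inequality above. If instead $\vphi=0$, then the zero-order and nonlocal terms all vanish in the limit, so necessarily $\int_{\pa E}|D_\tau\vphi_n|^2\,d\H\to 0$; by the definition of the norm on $\Ht(\pa E)$ this forces $\|\vphi_n\|_{H^1}\to 0$, contradicting $\|\vphi_n\|_{H^1}=1$. In either case we reach a contradiction, so $m_0>0$. The inequality $\pa^2J(E)[\vphi]\geq m_0\|\vphi\|^2_{H^1}$ for arbitrary nonzero $\vphi\in T^\perp(\pa E)$ then follows by applying the definition of $m_0$ to $\vphi/\|\vphi\|_{H^1}$ and exploiting the $2$-homogeneity of the quadratic form $\pa^2J(E)$. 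The only non-routine step is the continuity (and not merely lower semicontinuity) of the nonlocal term under weak $H^1$-convergence on $\pa E$; this is precisely what the $H^{-1}$-interpretation \eqref{nonloc} delivers, via strong convergence of the associated measures.
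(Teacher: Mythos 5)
Your proof is correct and follows essentially the same route as the paper: extract a minimizing/contradictory sequence, pass to a weak $H^1$-limit $\vphi_0\in T^\perp(\pa E)$ using compactness, and split into the cases $\vphi_0\neq 0$ (where strict positivity gives a contradiction) and $\vphi_0=0$ (where the compact terms vanish, forcing the Dirichlet term, i.e.\ the $\Ht$-norm, to go to zero, contradicting the normalization). The paper's version is more terse and simply cites \eqref{pa2J} for the requisite convergence, whereas you spell out why the $|B_{\pa E}|^2$, $\pa_\nu v$, and nonlocal terms pass to the limit under strong $L^2(\pa E)$-convergence, the last via the $H^{-1}$-reformulation \eqref{nonloc}; this is the same reasoning the paper implicitly relies on, made explicit.
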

\begin{proof}
Let $\vphi_h$ be a minimizing sequence for the infimum in \eqref{c0} and assume that $\vphi_h\wto \vphi_0\in T^\perp(\pa E)$ weakly in $H^1(\pa E)$. If $\vphi_0\neq 0$, by \eqref{pa2J} it follows that 
$$
m_0=\lim_h \pa^2 J(E)[\vphi_h]\geq \pa^2 J(E)[\vphi_0]>0\,.
$$
 If $\vphi_0= 0$, then
$$
m_0=\lim_h\pa^2 J(E)[\vphi_h]=\lim_h\int_{\pa E}|D_\tau \vphi_h|^2 \, d\H=1\,.
$$ 
\end{proof}
We now show how to construct a flow satisfying \eqref{recente2} connecting any two sufficiently regular and close sets in $\T$. If $E\subset\T$ is at least of class $C^2$, we denote by $\mathcal N_{r}(\pa E)$ the tubular neighborhood of $\pa E$ of thickness $2r$.
We shall always assume $r$ to be so small that the signed distance $d$ from $\pa E$ and the projection $\pi$ on $\pa E$ are well defined and regular on
$\mathcal N_{r}(\pa E)$; when $r$ is irrelevant, we shall omit it.
\begin{theorem}\label{prop:connect}
Let $E\subset\T$ be a set of class $C^3$ and let $p>N-1$. For all  $\e>0$ there exist a  tubular neighborhood $\mathcal N_{r}(\pa E)$ and   two positive constants $\delta$, $C$ with the following properties: If $\psi\in C^{2}(\pa  E)$ and $\|\psi\|_{W^{2,p}(\pa E)}\leq \de$ then
there exists a field $X\in C^{2}$ with $\Div X=0$  in 
$\mathcal N_{r}(\pa E)$ such that
\begin{equation}\label{connect0}
\|X-\psi\nu\|_{L^2(\pa E)}\leq \e\|\psi\|_{L^2(\pa E)}\,.
\end{equation}
Moreover,  the associated flow
\beq\label{campo-1}\Phi(x,0)=x\;,\qquad \frac{\pa\Phi}{\pa t}=X(\Phi)\eeq
satisfies $\Phi(\pa E,1)=\{x+\psi(x)\nu:\, \in\pa E\}$, and for every $t\in[0,1]$
\begin{equation}\label{campo0}
 \|\Phi(\cdot, t)-{\rm Id}\|_{W^{2,p}(\pa E)}\le C\|\psi\|_{W^{2,p}(\pa E)}\,,
\end{equation}
where ${\rm Id}$ denotes the identity map.
If in addition $E_1$ has the same volume as $E$, then for every $t$ we have $|E_t|=|E|$ and 
$$
\int_{\pa E_t}X\cdot \nu^{ E_t}\, d\H=0\,.
$$
\end{theorem}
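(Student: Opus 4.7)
The plan is to work in normal coordinates near $\pa E$ and construct $X$ as a purely normal field of a special form that is automatically divergence-free. Parametrize $\mathcal N_r(\pa E)$ by $y=x+s\nu(x)$ with $x=\pi(y)$ and $s=d(y)$, whose Jacobian is $J(x,s)=\prod_{i=1}^{N-1}(1-s\kappa_i(x))$, with $\kappa_i$ the principal curvatures of $\pa E$. Observe $J(x,0)=1$, $\nabla d(y)=\nu(\pi(y))$, and the identity $\Delta d=-\pa_s\log J$. For a field $X=f(y)\nabla d(y)$ one then has $\Div X=\pa_s f+f\Delta d=\pa_s f-fJ'/J$, which vanishes in $\mathcal N_r$ if and only if $f(x,s)=C(x)J(x,s)$ for some scalar $C$ depending only on the base point $x\in\pa E$. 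I would therefore look for $X$ in the ansatz
$$
X(y):=C(\pi(y))\,J(\pi(y),d(y))\,\nu(\pi(y)),\qquad y\in\mathcal N_r(\pa E),
$$
whose integral curves are the straight normal lines of $\pa E$.

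To fix $C$, along the normal line through $x\in\pa E$ the flow obeys $\dot\sigma=C(x)J(x,\sigma)$ with $\sigma(0)=0$, integrating implicitly to $\int_0^{\sigma(t)}d\tau/J(x,\tau)=t\,C(x)$. Imposing $\sigma(1)=\psi(x)$ forces
$$
C(x):=\int_0^{\psi(x)}\frac{d\tau}{J(x,\tau)},
$$
so that $\Phi(\pa E,1)=\{x+\psi(x)\nu(x):x\in\pa E\}$ automatically. Since $J(x,0)=1$, a Taylor expansion gives $C(x)=\psi(x)+O(\psi(x)^2)$ uniformly in $x$, so on $\pa E$, where $X\cdot\nu=C$,
$$
\|X-\psi\nu\|_{L^2(\pa E)}=\|C-\psi\|_{L^2(\pa E)}\le C'\|\psi\|_{L^\infty(\pa E)}\|\psi\|_{L^2(\pa E)}.
$$
Because $p>N-1$, the Sobolev embedding $W^{2,p}(\pa E)\hookrightarrow L^\infty(\pa E)$ and $\|\psi\|_{W^{2,p}(\pa E)}\le\delta$ yield \eqref{connect0} once $\delta$ is chosen small enough in terms of $\e$. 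The bound \eqref{campo0} follows by twice implicitly differentiating $\int_0^{\sigma(t,x)}d\tau/J(x,\tau)=t\,C(x)$ in the tangential variables: the prefactor $1/J(x,\sigma(t,x))$ stays within $1+O(\delta)$ of $1$, while the derivatives of $C$ up to order two are controlled by the corresponding derivatives of $\psi$ via the chain rule.

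For the volume-preservation statement I would extend $X$ globally by multiplying by a smooth cutoff in $d(y)$ that equals $1$ on a slightly smaller tubular neighborhood (renamed $\mathcal N_r$) and vanishes outside some larger $\mathcal N_R$, adding in the collar $r<|d|<R$ a small tangential correction that absorbs the divergence produced by the cutoff of the normal part. The resulting $X$ is $C^2$, compactly supported in $\mathcal N_R$, and still divergence-free throughout $\mathcal N_r$; its flow $\Phi(\cdot,t)$ is therefore a globally measure-preserving $C^2$-diffeomorphism of $\T$. Consequently $|E_t|=|E|$ for every $t$ and, by the first-variation formula for volume, $\int_{\pa E_t}X\cdot\nu^{E_t}\,d\H=\tfrac{d}{dt}|E_t|=0$.

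The main obstacle will be this last step: producing the tangential correction while keeping $\Div X=0$ on all of $\mathcal N_r$. On each parallel surface $\pa E_s$ with $s$ in the cutoff annulus the requirement is a scalar equation $\Div_{\pa E_s}W=g$ with $g$ of order $\|\psi\|_{L^\infty}$; its solvability requires the compatibility $\int_{\pa E_s}g\,d\H=0$, which can be imposed by adjusting $C$ by an additive constant of size $O(\|\psi\|_{L^\infty})$ that is absorbed into the previous estimates. Standard elliptic theory on the closed hypersurface $\pa E_s$ then supplies a $C^2$ solution $W$ depending smoothly on $s$, completing the construction.
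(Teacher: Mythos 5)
Your ansatz---a purely normal field of the form $X(y)=C(\pi(y))\,J(\pi(y),d(y))\,\nu(\pi(y))$ with the Jacobian factor absorbing the divergence---is morally close to the paper's construction (the paper's $\zeta$ in \eqref{connect1.5} and $t_\sigma\circ G$ play the roles of your $J$ and $C$), but there are two genuine gaps.

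\textbf{Regularity.} With $E\in C^3$ the signed distance is only $C^3$, so the second fundamental form and hence $J(x,s)$ are only $C^1$ in the tangential variable $x$. Your field $X$ is therefore only $C^1$, not $C^2$ as the statement requires, and the implicit equation $\int_0^{\sigma(t,x)}d\tau/J(x,\tau)=tC(x)$ cannot be differentiated twice in $x$ (second tangential derivatives of $J$ do not exist), so the estimate \eqref{campo0} in $W^{2,p}(\pa E)$ is out of reach. This is precisely why the paper mollifies $d$ to $d_\sigma=\rho_\sigma*d$ and works with the smooth flow of $\nabla d_\sigma$: all subsequent objects ($\zeta$, $\pi_\sigma$, $t_\sigma$, $G$) are then as regular as one needs, at the price of the error estimate \eqref{connectone}.

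\textbf{Volume preservation.} Your final step cannot work. If you succeeded in extending $X$ to a compactly supported, globally divergence-free $C^2$ field, the flow would be measure preserving on all of $\T$, forcing $|E_1|=|E|$ unconditionally. But $E_1$ is the set bounded by $\{x+\psi(x)\nu(x)\}$, so $|E_1|$ is determined by $\psi$ alone, and the theorem makes no assumption that $\psi$ preserves volume: the equality $|E_1|=|E|$ is an \emph{extra hypothesis}. Concretely, the obstruction is the flux $\int_{\pa E_s}X\cdot\nu\,d\H$, which by $\Div X=0$ inside $\mathcal N_r$ is constant in $s$ and approximately equal to $\int_{\pa E}\psi\,d\H\neq 0$ in general; this is exactly what prevents the compatibility condition $\int_{\pa E_s}g\,d\H=0$ for your tangential correction $W$, and adjusting $C$ by a constant either destroys $\Phi(\pa E,1)=\{x+\psi\nu\}$ or cannot cancel $\chi'(s)\int_{\pa E}\psi\,d\H$ for every $s$. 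The paper's argument is much weaker and thus correct: using only $\Div X=0$ in $\mathcal N_r(\pa E)$ and the identity $\frac{d^2}{dt^2}|E_t|=\int_{\pa E_t}(\Div X)(X\cdot\nu^{E_t})\,d\H=0$, the function $t\mapsto |E_t|$ is affine, and it is \emph{then} the hypothesis $|E_0|=|E_1|$ that makes it constant, giving $\int_{\pa E_t}X\cdot\nu^{E_t}\,d\H=\frac{d}{dt}|E_t|=0$.
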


\begin{proof} For $\sigma>0$ set $d_\sigma:=\rho_\sigma*d$, where $\rho_\sigma$ is the standard mollifier. Since $E$ is of class $C^3$ there exist a neighborhood 
$\mathcal N_{r}(\pa E)$ and $\sigma_\e$ such that if $0<\sigma<\sigma_\e$ 
\beq\label{connectone}
\|d_\sigma-d\|_{C^3(\mathcal N_{r}(\pa E))}\leq \e\,.
\eeq
For such $\sigma$ let $\Psi$ be the flow associated to $\nabla d_{\sigma}$, i.e.
$$
\Psi(x,0)=x\;,\qquad \frac{\pa\Psi}{\pa t}=\nabla d_{\sigma}(\Psi)\,.
$$
Then, there exists $t_0>0$ such that $\Psi_{|\pa E\times(-t_0,t_0)}$ is a 
$C^\infty$-diffeomorphism onto some neighborhood $U$ of $\pa E$. 
We start by constructing a $C^\infty$ vector field $\tilde X:U\to\R^N$  such that 
$$
\Div \tilde X=0\quad\text{in $U$},\qquad\quad \tilde X=\nabla d_\sigma\quad\text{on $\pa E$}.
$$
To this aim, for every $y\in U$ we set
\beq\label{connect1.5}
\zeta(y)=\zeta (\Psi(x,t)):=\exp\Bigl(-\int_0^t\Delta d_\sigma(\Psi(x,s))\, ds\Bigr)\,. 
\eeq
By construction we have that $\Div(\zeta\nabla d_\sigma)=0$ in $U$.
We define $\tilde X$ to be any $C^{\infty}$-vector field which coincides with $\zeta\nabla d_\sigma$ on $U$, and denote by $\tilde \Phi$ the associated flow. Note that
$\tilde \Phi$ and $\Psi$ have the same trajectories. 
Let us consider the two functions $\pi_\sigma: U\to \pa E$, $t_\sigma: U\to \R$ implicitly defined by
$$
\tilde \Phi (\pi_\sigma(y), t_\sigma(y))=y\,.
$$
If $t$ is small, for all $x\in \pa E$ we have $t_\sigma(\tilde \Phi(x, t))=t$. Hence, 
$\nabla t_\sigma(\tilde \Phi(x,t))\cdot \frac{\pa}{\pa t}\tilde\Phi(x,t)=1$ and in particular
$\nabla t_\sigma\cdot \nabla d_\sigma=1$ on $\pa E$. Therefore, since $t_\sigma=0$ on $\pa E$, we have
$$
\nabla t_\sigma=\frac{\nabla d}{\nabla d\cdot \nabla d_\sigma}\qquad\text{on $\pa E$.}
$$
Therefore, for $\sigma<\sigma_\e$ sufficiently small $\|\nabla t_\sigma-\nabla d_\sigma\|_{L^{\infty}(\pa E)}\leq \e$. Thus, taking $r$ smaller if needed, we may assume that $\mathcal N_{r}(\pa E)\subset U$ and for all $y\in \mathcal N_{r}(\pa E)$
$$
|t_\sigma(y)-d_\sigma(y)|\leq 2\e d(y)\,.
$$
In other words, there exists a function $a_\sigma\in C^3(\mathcal N_{r}(\pa E))$, with
$\|a_\sigma\|_{L^{\infty}(\mathcal N_{r}(\pa E))}\leq 2\e$ such that
\beq\label{connecttwo}
t_\sigma(y)=d(y)(1+a_\sigma(y))\,.
\eeq
Let us now take $\psi\in C^2(\pa E)$. If
  $\|\psi\|_{L^\infty(\pa E)}$ is small, we set 
  $$
  S(x):=\pi_{\sigma}(x+\psi(x)\nu(x))
  $$
  for $x \in \pa E$. Since $E$ is of class $C^3$ we have that $S$ is of class $C^2$. Moreover, 
  $$
  D_\tau S(x)=(D_\tau\pi_\sigma)(x+\psi(x)\nu
(x))+R(x)\,,
  $$
  where $|R(x)|\leq C\|\psi\|_{C^1(\pa E)}$. Therefore, since $\pi_\sigma(x)=x$ on $\pa E$, we deduce that $S$ is a $C^2$-diffeomorphism, provided that  $\|\psi\|_{C^1(\pa E)}$ is small. Moreover, it is easily checked if $\|D_{\tau\tau}\psi\|_{L^p(\pa E)}\leq 1$, then 
  \beq\label{connect3}
  \|S^{-1}\|_{W^{2,p}(\pa E)}\leq C
\eeq
for some positive constant $C$ independent of $\psi$. Note also that
\beq\label{connect4}
|S^{-1}(x)-x|= |S^{-1}(x)-S^{-1}(\pi_{\sigma}(x+\psi(x)\nu
(x)))|
\leq C |x-\pi_{\sigma}(x+\psi(x)\nu
(x))|\leq C|\psi(x)|\,.
\eeq
Now for $y\in \mathcal N_{r}(\pa E)$ we set 
\beq\label{connect5}
G(y):=(S^{-1}\circ\pi_\sigma)(y)+\nu
((S^{-1}\circ\pi_\sigma)(y))\psi((S^{-1}\circ\pi_\sigma)(y))\,.
\eeq
Thus, $G(y)$ is the unique point of the trajectory of $\tilde \Phi$ passing through $y$ that belongs to the graph of $\psi$.  Finally, we may define
\beq\label{connect6}
X(y):=t_\sigma(G(y))\tilde X(y)
\eeq
for $y\in \mathcal N_{r}(\pa E)$. Note that
 $X\in C^2(\mathcal N_{r}(\pa E); \T)$. We shall still denote by $X$ any $C^2$-extension of the vector field to $\T$.

Since $t_\sigma\circ G$ is constant along the trajectories of $\tilde \Phi$, we have
$\Div X=0$ in $\mathcal N_{r}(\pa E)$. Let us denote by $\Phi$ the flow associated to $X$. Since $t_\sigma(G(x))$ is the time needed to go from $x$ to $G(x)$
along the trajectory of $\tilde \Phi$, we have $\Phi(x,1)=G(x)$.
Thus, we may conclude that $\Phi(\pa E, 1)$ is the graph of $\psi$. 
Note that from \eqref{connecttwo} and \eqref{connect5}
\beq\label{connect7}
X(y)=\psi((S^{-1}\circ\pi_\sigma)(y))(1+a_\sigma(G(y)))\zeta(y)\nabla d_\sigma(y)\,.
\eeq
Thus, from \eqref{connect3} we have
\beq\label{campo3}
\|X\|_{W^{2,p}({\mathcal N}_{r}(\pa E))}\leq C \|\psi\|_{W^{2,p}(\pa E)}
\eeq
for a constant $C>0$ independent of $\psi$.

We now show \eqref{connect0}. From \eqref{connect7}, \eqref{connect3}, and \eqref{connect4}, we have for every $x\in \pa E$
\begin{align*}
|X(x)&-\psi(x)\nu
(x)|=|\psi((S^{-1}\circ\pi_\sigma)(x))(1+a_\sigma(G(x)))\zeta(x)\nabla d_\sigma(x)-\psi(x)\nabla d(x)|\\
&\leq |\psi(S^{-1}(x))| |(1+a_\sigma(G(x)))\zeta(x)\nabla d_\sigma(x)-\nabla d(x)|
+|(\psi(S^{-1}(x))-\psi(x))\nabla d(x)|\\
&\leq C\e |\psi(S^{-1}(x))|+|\psi(S^{-1}(x))-\psi(x)|\\
&\leq  C\e |\psi(S^{-1}(x))|+\|\psi\|_{C^{1}(\pa E)}|S^{-1}(x)-x|\\
&\leq C\e(|\psi(S^{-1}(x))|+|\psi(x)|)
 \end{align*}
provided that $\|\psi\|_{C^{1}(\pa E)}$ is small. Hence, \eqref{connect0} follows.

To establish  \eqref{campo0}, observe that 
 the closeness of $\Phi$ to ${\rm Id}$ in $L^\infty$ follows from \eqref{campo-1} and \eqref{campo3}. 
 By differentiating  \eqref{campo-1} and solving the resulting equation,  and since $p>N-1$,  one easily gets
 $$
 \|\nabla_x\Phi-I\|_{L^{\infty}({\mathcal N}_{\e_0}(\pa E))}\leq C(\e_0)\|\nabla X\|_{L^\infty({\mathcal N}_{\e_0}(\pa E))}\leq C(\e_0)\|\psi\|_{W^{2,p}(\pa E)}\leq C(\e_0)\e\,.
 $$
 In particular, if $\e$ is small enough the $(N-1)$-dimensional Jacobian of $\Phi(\cdot, t)$ on $\pa E$ is uniformly close to $1$. 
 Using this information and by differentiating again \eqref{campo-1}, we have
 $$
 \|\nabla^2_x\Phi(\cdot, t)\|_{L^p(\pa E)}\leq C(\e_0)\|\nabla^2X\|_{L^p({\mathcal N}_{\e_0}(\pa E))}\,,
 $$ 
 whence \eqref{campo0} follows.

Assume now that $|E_1|=|E|$ and recall that by \cite[equation (2.30)]{CS}
$$
\frac{d^2}{dt^2}|E_t|=\int_{\pa E_t}(\Div X)(X\cdot\nu^{ E_t})=0\,\qquad\text{for all $t\in [0,1]$.}
$$
Hence the function $t\mapsto |E_t|$ is affine in $[0,1]$ and since $|E_0|=|E|=|E_1|$ it is constant. Therefore
$$
0=\frac{d}{dt}|E_t|=\int_{E_t}\Div X\, dx=\int_{\pa E_t}X\cdot\nu^{ E_t}\, d\H\, \qquad\text{for all $t\in [0,1]$.}
$$
This concludes the proof of the theorem.
\end{proof}

Before proving the main result of the section we need the following key lemma, which shows that any set $F$ sufficiently close to $E$ can be translated in such a way that the resulting set $\widetilde F$ satisifies
$\pa \widetilde F=\{x+\psi(x)\nu(x):\,x\in\partial E\}$, with $\psi$ having a suitably small projection on $T(\pa E)$.
\begin{lemma}\label{lm:italiano} Let $E\subset\T$ be of class $C^3$ and let  $p>N-1$. For any $\delta>0$ there exist  $\eta_0$, $C>0$  such that if $F\subset \T$ satisfies $\partial F=\{x+\psi(x)\nu(x):\,x\in\partial E\}$ for some $\psi\in C^{2}(\pa E)$ with $\|\psi\|_{W^{2,p}(\partial E)}\leq\eta_0$, then there exist $\sigma\in\R^N$ and $\varphi\in W^{2,p}(\partial E)$  with the properties that  
$$
|\sigma|\leq C\|\psi\|_{W^{2,p}(\partial E)},\qquad \|\varphi\|_{W^{2,p}(\partial E)}\leq C\|\psi\|_{W^{2,p}(\partial E)}
$$
and 
$$
\partial F-\sigma=\{x+\varphi(x)\nu(x):\,x\in\partial E\},\qquad \Bigl|\int_{\partial E}\varphi\nu\,d{\cal H}^{N-1}\Bigr|\leq\delta\|\varphi\|_{L^2(\partial E)}\,.
$$ 
\end{lemma}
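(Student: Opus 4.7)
The strategy is to apply the inverse function theorem to the finite-dimensional map that sends a translation parameter $\sigma\in\R^N$ to the vector $\int_{\pa E}\varphi_\sigma\,\nu\,d\H\in\R^N$, where $\varphi_\sigma$ is the graph function of $\pa F-\sigma$ over $\pa E$. An exact cancellation of this integral is both possible and, once achieved, trivially implies the desired bound.

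First I parametrize. For $\sigma\in\R^N$ with $|\sigma|$ sufficiently small (depending on $\eta_0$ through $\|\psi\|_{C^1(\pa E)}$, which is controlled since $p>N-1$), the set $\pa F-\sigma$ remains a normal graph over $\pa E$: there is a unique $\varphi_\sigma\in W^{2,p}(\pa E)$ with
$$
\pa F-\sigma=\{x+\varphi_\sigma(x)\nu(x):\,x\in\pa E\}.
$$
The assignment $\sigma\mapsto \varphi_\sigma$ is $C^1$ into $W^{2,p}(\pa E)$ (by a pointwise implicit function argument on the smooth defining relation), with $\varphi_0=\psi$ and first-order expansion
$$
\varphi_\sigma(x)=\psi(x)-\sigma\cdot\nu(x)+R_\psi(\sigma)(x),\qquad \|R_\psi(\sigma)\|_{W^{2,p}(\pa E)}\leq C\bigl(|\sigma|^2+|\sigma|\,\|\psi\|_{W^{2,p}(\pa E)}\bigr).
$$

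Next I reduce to a finite-dimensional system. Let $\{\e_1,\dots,\e_N\}$ be the orthonormal frame furnished by \eqref{giro} and let $V\subseteq\R^N$ be the span of those $\e_i$ for which $\nu\cdot\e_i\not\equiv 0$ on $\pa E$. Define $T\colon B_{\eta_1}(0)\cap V\to V$ by
$$
T(\sigma):=\int_{\pa E}\varphi_\sigma\,\nu\,d\H;
$$
the integral really takes values in $V$, since its $\e_j$-component vanishes automatically whenever $\nu\cdot\e_j\equiv 0$. By the parametrization, $T$ is $C^1$ with $T(0)=\int_{\pa E}\psi\,\nu\,d\H$, so $|T(0)|\leq C\|\psi\|_{L^2(\pa E)}$, and using the expansion above together with the orthogonality \eqref{giro} one sees that $DT(0)$ equals $-\,\mathrm{diag}\bigl(\|\nu\cdot\e_i\|_{L^2(\pa E)}^2\bigr)_{\e_i\in V}$ modulo a perturbation of size $O(\|\psi\|_{W^{2,p}(\pa E)})$; hence $DT(0)$ is invertible and $\|DT(0)^{-1}\|$ is bounded uniformly in $\psi$ by a constant depending only on $E$.

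Finally I invoke the quantitative inverse function theorem: since $DT$ depends continuously on $(\sigma,\psi)$ and $DT_0(0)$ is invertible, there exist $\eta_0,\,r>0$ such that for every $\psi$ with $\|\psi\|_{W^{2,p}(\pa E)}\leq\eta_0$ the map $T$ is a diffeomorphism of $B_r(0)\cap V$ onto a neighborhood of $T(0)$ that contains $0$, so one can solve $T(\sigma)=0$ with $\sigma\in V$ and $|\sigma|\leq C|T(0)|\leq C\|\psi\|_{W^{2,p}(\pa E)}$. Setting $\varphi:=\varphi_\sigma$, the Step-1 expansion and this bound on $|\sigma|$ give $\|\varphi\|_{W^{2,p}(\pa E)}\leq C\|\psi\|_{W^{2,p}(\pa E)}$, while the choice of $\sigma$ forces $\int_{\pa E}\varphi\,\nu\,d\H=0$, trivially implying the required bound for any $\delta>0$. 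The delicate technical point is the \emph{uniformity in $\psi$} of this inverse function theorem application---the domain and image radii must be chosen independently of $\psi$ in the ball $\{\|\psi\|_{W^{2,p}(\pa E)}\leq\eta_0\}$---which follows from the joint continuous dependence of $DT$ on $(\sigma,\psi)$ carried by the implicit equation defining $\varphi_\sigma$.
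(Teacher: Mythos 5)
Your proposal takes a genuinely different route from the paper. The paper constructs the shift iteratively: it picks an explicit first guess $\sigma^1$ via the projection formula \eqref{uno}, derives the interpolation estimate \eqref{dieci} bounding $\bigl|\int_{\pa E}\varphi_1\nu\,d\H\bigr|$, and then either stops (if $\|\varphi_1\|_{L^2}$ is not too small compared with $\|\psi\|_{L^2}$) or repeats the construction, with the dichotomy controlling the geometric decay of $\|\varphi_n\|_{L^2}$. You instead set up the finite-dimensional map $T(\sigma)=\int_{\pa E}\varphi_\sigma\nu\,d\H$ and solve $T(\sigma)=0$ exactly by the quantitative inverse function theorem, using \eqref{giro} to recognize $DT(0)$ as a negative-definite diagonal matrix on $V$ plus a perturbation of size $O(\|\psi\|_{W^{2,p}})$. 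Your conclusion is strictly stronger (the integral vanishes identically rather than being $\leq\delta\|\varphi\|_{L^2}$), and the argument replaces the paper's hand-coded fixed-point iteration with a standard black-box. Both approaches ultimately rest on the same reparametrization machinery (the estimates \eqref{transone}--\eqref{quattro}): you need them to bound $\|\varphi_\sigma\|_{W^{2,p}}$ at the end, and the paper needs them at each step of the iteration. One small caveat: your claim that $\sigma\mapsto\varphi_\sigma$ is $C^1$ as a map \emph{into $W^{2,p}(\pa E)$} is more than is needed and may be delicate for a merely $C^3$ boundary (differentiating in $\sigma$ and then twice tangentially brings in one derivative more of the projection $\pi$ than the paper ever uses); what you actually require is only that the finite-dimensional map $T\colon\R^N\to\R^N$ be $C^1$ with uniformly continuous derivative, which holds under weaker regularity — $DT$ is just an integral whose integrand depends on $\nabla d$, $\nabla^2\pi$, $\nabla\nu$ (smooth) and $\nabla\psi$ (bounded by Sobolev embedding since $p>N-1$). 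The $W^{2,p}$ bound on $\varphi$ should then be taken from \eqref{quattro} directly rather than from the expansion of $R_\psi(\sigma)$. You correctly flag the uniformity-in-$\psi$ of the inverse function theorem radii as the key technical point; with the corrections above this does go through, and the argument is sound.
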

\noindent
\begin{proof} 
In the following $\nu$  stands for $\nabla d$, where $d$ is the signed distance from
 $\pa E$. Throughout the proof the various constants will be independent of $\psi$.
Set 
$$
\eta:=\|\psi\|_{W^{2,p}(\partial E)}+\|\psi\|_{L^2(\partial E)}\,.
$$
We recall  that there exists an orthonormal frame $\{\e_1, \dots, \e_N\}$ satisfying \eqref{giro}.
   
Let $I$ be the set of all $i\in \{1,\dots, N\}$ such that 
$\|\nu\cdot\e_i\|_{L^2(\pa E)}>0$. We define $\sigma=\sum_{i=1}^N\sigma_i\e_i$, where 
\begin{equation}\label{uno}
\sigma_i:=\frac{1}{\|\nu\cdot\e_i\|^2_{L^2(\pa E)}}\int_{\pa E}\psi(x)(\nu(x)\cdot \e_i)\,d\H\quad\text{if $i\in I$}\,,\qquad
\sigma_i=0\quad\text{otherwise.}
\end{equation}
Note that
\begin{equation}\label{due}
|\sigma|\leq C_1\|\psi\|_{L^2(\pa E)}\,.
\end{equation}

\noindent{\it Step 1.}  Let $T_\psi:\pa E\mapsto\pa E$ be the map
$$
T_\psi(x):=\pi(x+\psi(x)\nu(x)-\sigma)\,.
$$ 
It is easily checked that there exists $\e_0>0$ such that if 
\beq\label{transone}
\|\psi\|_{W^{2,p}(\pa E)}+|\sigma|\leq \e_0\leq 1\,,
\eeq
then $T_{\psi}$ is a diffeomorphism of class $C^2$.  Moreover, 
\beq\label{transtwo}
\bigl\|J_{N-1}d^{\pa E}T_\psi-1\bigr\|_{L^\infty(\pa E)}\leq C\|\psi\|_{C^1(\pa E)}
\eeq
and
\beq\label{trans3}
\|T_\psi^{-1}\|_{W^{2,p}(\pa E)}\leq C(\|\psi\|_{W^{2,p}(\pa E)}+|\sigma|)\,.
\eeq
Therefore, setting $\widehat F:=F-\sigma$, we have 
$$
\pa\widehat F=\{x+\varphi(x)\nu(x):\,x\in\partial E\}
$$
for some function $\varphi$, which is linked to $\psi$ by the following relation: for all $x\in\pa E$   
$$
x+\psi(x)\nu(x)-\sigma=y+\varphi(y)\nu(y)\,,
$$
where $y=T_\psi(x)$ and $\vphi(y)=d(T_\psi(x))$.
Thus, using \eqref{trans3}
 \begin{equation}\label{quattro}
\|\varphi\|_{W^{2,p}(\pa E)}\leq C_2\bigl(\|\psi\|_{W^{2,p}(\pa E)}+|\sigma|\bigr)
\end{equation}
for some $C_2\geq1$.
We now estimate
\begin{eqnarray}
\int_{\pa E}\varphi(y)\nu(y)\,d\H(y)&=&\int_{\pa E}\varphi(T_{\psi}(x))\nu(T_{\psi}(x))J_{N-1}d^{\pa E}T_\psi(x)\,d\H(x)\label{cinque}\\
&=& \int_{\pa E}\varphi(T_{\psi}(x))\nu(T_{\psi}(x))\,d\H(x)+R_1,\,\nonumber
\end{eqnarray}
where
\begin{equation}\label{sei}
|R_1|=\biggl|\int_{\pa E}\varphi(T_{\psi}(x))\nu(T_{\psi}(x))\bigl[J_{N-1}d^{\pa E}T_\psi(x)-1\bigr]\,d\H(x)\biggr|\leq C_3\|\psi\|_{C^1(\pa E)}\|\varphi\|_{L^2(\pa E)}\,.
\end{equation}
On the other hand 
\begin{eqnarray}\label{sette}
\int_{\pa E}\varphi(T_{\psi}(x))\nu(T_{\psi}(x))\,d\H&=&\int_{\pa E}\bigl[x+\psi(x)\nu(x)-\sigma-T_{\psi}(x)\bigr]\,d\H \\
&=&\int_{\pa E}\bigl[x+\psi(x)\nu(x)-\sigma-\pi(x+\psi(x)\nu(x)-\sigma)\bigr]\,d\H \nonumber\\
&=&\int_{\pa E}\bigl\{\psi(x)\nu(x)-\s+\bigl[\pi(x)-\pi(x+\psi(x)\nu(x)-\sigma)\bigr]\bigr\}\,d\H \nonumber\\
&=&\int_{\pa E}(\psi(x)\nu(x)-\s)\,d\H+R_2\,, \nonumber
\end{eqnarray}
where
\begin{eqnarray}\label{otto}
R_2&=&\int_{\pa E}\bigl[\pi(x)-\pi(x+\psi(x)\nu(x)-\sigma)\bigr]\,d\H\\
&=&-\int_{\pa E}d\H\int_0^1\nabla\pi\bigl(x+t(\psi(x)\nu(x)-\sigma)\bigr)(\psi(x)\nu(x)-\sigma)\,dt \nonumber \\
&=&-\int_{\pa E}\nabla\pi(x)(\psi(x)\nu(x)-\sigma)\,d\H+R_3\,. \nonumber
\end{eqnarray}
In turn, recalling \eqref{due}
\begin{equation}\label{nove}
|R_3|\leq\int_{\pa E}d\H\int_0^1\bigl|\nabla\pi\bigl(x+t(\psi(x)\nu(x)-\sigma)\bigr)-\nabla\pi(x)\bigr||\psi(x)\nu(x)-\sigma|\,dt \leq C_4\|\psi\|^2_{L^2(\pa E)}\,.
\end{equation}
If $x$ is sufficiently  close to $\pa E$, then $\pi(x)=x-d(x)\nu(x)=x-d(x)\nabla d(x)$ and  
$$
\frac{\pa\pi_i}{\pa x_j}(x)=\delta_{ij}-\frac{\pa d}{\pa x_i}(x)\frac{\pa d}{\pa x_j}(x)-d(x)\frac{\pa d^2}{\pa x_i\pa x_j}(x)
$$
and, thus, for all  $x\in\pa E$
$$
\frac{\pa\pi_i}{\pa x_j}(x)=\delta_{ij}-\nu_i(x)\nu_j(x)\,.
$$
From this identity and  \eqref{cinque}, \eqref{sette},  \eqref{otto} we get
$$
\int_{\pa E}\varphi(y)\nu(y)\,d\H(y)=\int_{\pa E}\bigl[\psi(x)\nu(x)-\bigl(\sigma\cdot\nu(x)\bigr)\nu(x)\bigr]\,d\H(x)+R_1+R_3\,.
$$
But the integral at the right-hand side vanishes  by \eqref{uno} and \eqref{giro}.  Therefore, \eqref{sei} and  \eqref{nove} imply
\begin{eqnarray}\label{dieci}
\Bigl|\int_{\pa E}\varphi(y)\nu(y)\,d\H(y)\Bigr|&\leq& C_3\|\psi\|_{C^1(\pa E)}\|\varphi\|_{L^2(\pa E)}+C_4\|\psi\|^2_{L^2(\pa E)} \\ \nonumber
&\leq& C\|\psi\|_{C^1(\pa E)}\bigl(\|\varphi\|_{L^2(\pa E)}+\|\psi\|_{L^2(\pa E)}\bigr) \\ \nonumber
&\leq& C_5 \|\psi\|_{W^{2,p}(\pa E)}^{1-\vartheta}\|\psi\|_{L^2(\pa E)}^{\vartheta}\bigl(\|\varphi\|_{L^2(\pa E)}+\|\psi\|_{L^2(\pa E)}\bigr) \,,
\end{eqnarray}
with $\vartheta\in(0,1)$  depending only on $p>N-1$. In the last inequality we used a well-known interpolation result, see for instance \cite[Theorem 5.2]{AdamsFournier}. 

\noindent {\it Step 2.} The  previous estimate does not allow to conclude directly, but we have to rely on the
 following iteration procedure.
Fix any number
\begin{equation}\label{K}
K>2
\end{equation}
 and assume that  $\eta\in (0,1)$ is such that 
\begin{equation}\label{eta}
C_2\eta(1+2C_1)\leq\e_0,\qquad2C_5\eta^\vartheta K\leq\delta\,.
\end{equation}
Given $\psi$, we set $\varphi_0=\psi$  and we denote by  $\sigma^1$ the vector defined as in  \eqref{uno}.  We set $F_1:=F-\sigma^1$  and denote by $\varphi_1$  the function   such that $\pa F_1=\{x+\varphi_1(x)\nu(x):x\in \pa E\}$.  As before, $\varphi_1$  satsfies 
$$
x+\varphi_0(x)\nu(x)-\sigma^1=y+\varphi_1(y)\nu(y)\,.
$$
Since $\|\psi\|_{W^{2,p}(\pa E)}\leq\eta$ and  $\|\psi\|_{L^{2}(\pa E)}\leq\eta$, by  \eqref{due}, \eqref{quattro},   and \eqref{eta} we have
\begin{equation}\label{undici}
\|\varphi_1\|_{W^{2,p}(\pa E)}\leq C_2\eta(1+C_1)\leq 1\,.
\end{equation}
Using again that  $\|\psi\|_{W^{2,p}(\pa E)}\leq\eta\leq1$,  by \eqref{dieci}  we obtain
$$
\Bigl|\int_{\pa E}\varphi_1(y)\nu(y)\,d\H(y)\Bigr|\leq C_5\|\varphi_0\|_{L^2(\pa E)}^{\vartheta}\bigl(\|\varphi_1\|_{L^2(\pa E)}+\|\varphi_0\|_{L^2(\pa E)}\bigr)\,.
$$
As for the last term we have  $\|\varphi_0\|_{L^2(\pa E)}\leq\eta$. 
 We now distinguish two cases. 
 
 \noindent If  $\|\varphi_0\|_{L^2(\pa E)}\leq K\|\varphi_1\|_{L^2(\pa E)}$,  from the previous inequality and \eqref{eta} we get 
$$
\Bigl|\int_{\pa E}\varphi_1(y)\nu(y)\,d\H(y)\Bigr|\leq C_5\eta^{\vartheta}\bigl(\|\varphi_1\|_{L^2(\pa E)}+\|\varphi_0\|_{L^2(\pa E)}\bigr)\leq 2C_5\eta^{\vartheta}K\|\varphi_1\|_{L^2(\pa E)}\leq\delta\|\varphi_1\|_{L^2(\pa E)}
$$
and, thus, the conclusion  follows with $\sigma=\sigma^1$. 

\noindent In the other case
\begin{equation}\label{dodici}
\|\varphi_1\|_{L^2(\pa E)}\leq\frac{\|\varphi_0\|_{L^2(\pa E)}}{K}\leq\frac{\eta}{K}\leq \eta\,.
\end{equation}
We repeat the whole procedure: denote by $\sigma^2$  the vector defined as in  \eqref{uno} with $\psi$ replaced by $\varphi_1$,  set $F_2:=F_1-\sigma^2=F-\sigma^1-\sigma^2$ and  consider the corresponding  $\varphi_2$. Then $\varphi_2$  satisfies
$$
z+\varphi_2(z)\nu(z)=y+\varphi_1(y)\nu(y)-\sigma^2=
x+\varphi_0(x)\nu(x)-\sigma^1-\sigma^2\,.
$$

Since 
\begin{align*}
\|\vphi_0\|_{W^{2,p}(\pa E)}+|\sigma_1+\sigma_2|& \leq \eta+C_1\eta+C_1\|\vphi_1\|_{L^2(\pa E)}\\
&\leq \eta+C_1\eta\Bigl(1+\frac1K\Bigr)\leq C_2\eta(1+2C_1)\leq\e_0\,,
\end{align*}
the map $T_{\vphi_0}(x):=\pi(x+\vphi_0(x)\nu(x)-(\sigma^1+\sigma^2))$ is a diffeomorphism thanks to \eqref{transone}.
Thus, by applying  \eqref{quattro} with $\sigma=\sigma_1+\sigma_2$, and \eqref{due}, \eqref{dodici}, \eqref{K},  \eqref{eta} implies
$$
\|\varphi_2\|_{W^{2,p}(\pa E)}\leq C_2\bigl(\|\varphi_0\|_{W^{2,p}(\pa E)}+|\sigma^1+\sigma^2|\bigr)\leq C_2\eta\Bigl(1+C_1+\frac{C_1}{K}\Bigr)\leq 1\,,
$$
analogously to \eqref{undici}. On the other hand, since by \eqref{undici}, \eqref{dodici}, and \eqref{due} 
$$
\|\vphi_1\|_{W^{2,p}(\pa E)}+\sigma_2\leq C_2\eta(1+C_1)+C_1\frac{\eta}{K}\leq 
C_2\eta(1+2C_1)\leq \e_0\,,
$$
also the map $T_{\vphi_1}(x):=\pi(x+\vphi_1(x)\nu(x)-\sigma^2)$ is a diffeomorphism
satisfying \eqref{transone} and \eqref{transtwo}. Therefore,
arguing as before, we obtain
$$
\Bigl|\int_{\pa E}\varphi_2(y)\nu(y)\,d\H(y)\Bigr|\leq C_5\|\varphi_1\|_{L^2(\pa E)}^{\vartheta}\bigl(\|\varphi_2\|_{L^2(\pa E)}+\|\varphi_1\|_{L^2(\pa E)}\bigr)\,.
$$
Since $\|\varphi_1\|_{L^2(\pa E)}\leq\eta$  by \eqref{dodici}, if $\|\varphi_1\|_{L^2(\pa E)}\leq K\|\varphi_2\|_{L^2(\pa E)}$  the conclusion follows  with $\sigma=\sigma^1+\sigma^2$. Otherwise,  we iterate the procedure observing that 
$$
\|\varphi_2\|_{L^2(\pa E)}\leq\frac{\|\varphi_1\|_{L^2(\pa E)}}{K}\leq\frac{\|\varphi_0\|_{L^2(\pa E)}}{K^2}\leq\frac{\eta}{K^2}\,.
$$
This construction leads to  three (possibly finite) sequences  $\sigma_n$,  $F_n$,  and $\varphi_n$  such that
$$
\begin{cases}
F_n=F-\s^1-\dots-\s^n, \qquad |\sigma^n|\leq\frac{C_1\eta}{K^{n-1}}\,, & \cr
\|\varphi_n\|_{W^{2,p}(\pa E)}\leq C_2\bigl(\|\varphi_0\|_{W^{2,p}(\pa E)}+|\sigma^1+\dots+\sigma^n|\bigr)\leq C_2\eta(1+2C_1)\,, & \cr
\|\varphi_n\|_{L^2(\pa E)} \leq\frac{\eta}{K^n}\,,  & \cr
\partial F_n=\{x+\varphi_n(x)\nu(x):\,x\in\partial E\}\,.
\end{cases}
$$
If for some $n$ we have $\|\varphi_{n-1}\|_{L^2(\pa E)}\leq K\|\varphi_n\|_{L^2(\pa E)}$,
the construction stops, since, arguing as before,  
$$
\Bigl|\int_{\pa E}\varphi_n(y)\nu(y)\,d\H(y)\Bigr|\leq\delta\|\varphi_n\|_{L^2(\pa E)}
$$
 and conclusion follows with  $\sigma=\s^1+\dots+\s^n$ and $\varphi=\varphi_n$. 
 Otherwise,  the  iteration continues indefinitely and we reach the conclusion with 
$$
\sigma=\sum_{n=0}^\infty\sigma^n, \qquad\varphi=0\,, 
$$
which means that  $F=E+\sigma$.

\end{proof}

We are now ready to prove the main result of this section.

\begin{theorem}\label{th:c2min}
Let $p>\max\{2, N-1\}$ and let $E$ be a regular  critical set for $J$  with positive second variation. Then there exist $\delta>0$, $C_0>0$ such that 
$$
J(F)\geq J(E)+C_0\bigl(\alpha(E,F)\bigr)^2\,,
$$
whenever  $F\subset \T$ satisifes $|F|=|E|$ and $\pa F=\{x+\psi(x)\nu(x):\, x\in \pa E\}$ for some $\|\psi\|_{W^{2,p}(\pa E)}\leq \delta$.
 \end{theorem}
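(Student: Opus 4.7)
The strategy is to reduce \eqref{quantit.area} (for sets $F$ close to $E$ in $W^{2,p}$) to a coercivity estimate for $\pa^2J$, integrated in a Taylor expansion of $J$ along a suitable volume-preserving flow from $E$ to a translate of $F$.

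\noindent\textbf{Setup and reduction.}  Given $F$ with $\pa F=\{x+\psi(x)\nu(x):x\in\pa E\}$ and $\|\psi\|_{W^{2,p}(\pa E)}\leq\delta$, I first apply Lemma~\ref{lm:italiano} with a small parameter $\eta>0$ (to be fixed at the end) to obtain $\sigma\in\R^N$ and $\vphi\in W^{2,p}(\pa E)$ with
$$
\pa(F-\sigma)=\{x+\vphi(x)\nu(x):x\in\pa E\},\quad \|\vphi\|_{W^{2,p}}\leq C\|\psi\|_{W^{2,p}},\quad \Bigl|\int_{\pa E}\vphi\nu\,d\H\Bigr|\leq\eta\|\vphi\|_{L^2}.
$$
Set $\widetilde F:=F-\sigma$, so $|\widetilde F|=|E|$ and, by translation invariance, $J(F)=J(\widetilde F)$. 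A first-order expansion of the symmetric difference gives $\alpha(E,F)\leq|E\triangle\widetilde F|\leq C\|\vphi\|_{L^1(\pa E)}\leq C'\|\vphi\|_{L^2(\pa E)}$, so it suffices to prove the stronger estimate $J(\widetilde F)-J(E)\geq c\|\vphi\|_{L^2(\pa E)}^2$ with $c>0$ depending only on $E$.

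\noindent\textbf{Flow and Taylor expansion.}  Since $|\widetilde F|=|E|$, Theorem~\ref{prop:connect} yields a $C^2$ divergence-free field $X$ whose flow $\Phi(\cdot,t)$ satisfies $\Phi(\cdot,1)(E)=\widetilde F$, $|\Phi(\cdot,t)(E)|=|E|$ for all $t\in[0,1]$, $\|X-\vphi\nu\|_{L^2(\pa E)}\leq\e\|\vphi\|_{L^2(\pa E)}$, and $\|\Phi(\cdot,t)-\Id\|_{W^{2,p}(\pa E)}\leq C\|\vphi\|_{W^{2,p}(\pa E)}$, with $\e>0$ at our disposal. Define $f(t):=J(\Phi(\cdot,t)(E))$ and $E_t:=\Phi(\cdot,t)(E)$. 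Since $E$ is critical and $\int_{\pa E}X\cdot\nu\,d\H=0$ (volume preservation), $f'(0)=\lambda\int_{\pa E}X\cdot\nu\,d\H=0$, so Taylor's formula gives
$$
J(\widetilde F)-J(E)=\int_0^1(1-t)\,f''(t)\,dt,
$$
while Theorem~\ref{th:J''} expresses $f''(t)$ as $\pa^2J(E_t)[X\cdot\nu^{E_t}]$ plus the two error terms in \eqref{eq:J''} (nonzero for $t>0$ because $E_t$ is not critical).

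\noindent\textbf{Uniform coercivity of $f''$.}  This is the main obstacle. I argue in three stages. First, the two error terms in \eqref{eq:J''} involve $4\gamma v_{E_t}+H_{\pa E_t}$; since volume preservation gives $\int_{\pa E_t}(\Div X)(X\cdot\nu^{E_t})\,d\H=0$ and the divergence theorem on the closed manifold $\pa E_t$ gives $\int_{\pa E_t}\Div_\tau(X_\tau(X\cdot\nu^{E_t}))\,d\H=0$, I may replace the factor $4\gamma v_{E_t}+H_{\pa E_t}$ by $4\gamma v_{E_t}+H_{\pa E_t}-\lambda$; by $W^{2,p}$-closeness of $E_t$ to $E$, Schauder regularity applied to \eqref{ELJ}, and Remark~\ref{rem:univ}, this quantity is $o_\delta(1)$ in $L^\infty$, producing an error of size $o_\delta(1)\|\vphi\|_{H^1}^2$. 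Second, pulling back the quadratic form $\pa^2J(E_t)[X\cdot\nu^{E_t}]$ to $\pa E$ via $\Phi(\cdot,t)$ and using continuity of curvatures, normals, $v_{E_t}$ and $G$ under $W^{2,p}$-perturbations of the set, together with $\|X\cdot\nu-\vphi\|_{L^2}\leq\e\|\vphi\|_{L^2}$, yields $\pa^2J(E_t)[X\cdot\nu^{E_t}]=\pa^2J(E)[\vphi]+o(1)\|\vphi\|_{H^1}^2$ where the $o(1)$ is controlled by $\e+\delta$. Third, by Step 1 and the decomposition \eqref{tperp}–\eqref{projtperp}, write $\vphi=\vphi^\perp+\vphi^T$ with $\|\vphi^T\|_{H^1}\leq C\eta\|\vphi\|_{H^1}$; Lemma~\ref{coerc} then gives
$$
\pa^2J(E)[\vphi]\geq m_0\|\vphi^\perp\|_{H^1}^2-C\eta\|\vphi\|_{H^1}^2\geq\tfrac{m_0}{2}\|\vphi\|_{H^1}^2
$$
once $\eta$ is chosen small enough. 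Fixing $\eta$, then $\e$, then $\delta$ so that all the $o(1)$ contributions are at most $m_0/8$, I obtain $f''(t)\geq(m_0/4)\|\vphi\|_{H^1(\pa E)}^2$ uniformly in $t\in[0,1]$; integrating in the Taylor formula gives $J(\widetilde F)-J(E)\geq(m_0/8)\|\vphi\|_{L^2(\pa E)}^2$, which by the reduction in Step 1 implies \eqref{quantit.area}. The crux of the whole argument is thus a quantitative continuity analysis in $W^{2,p}$ that absorbs both the non-critical corrections and the residual $T(\pa E)$-component of $\vphi$ into the spectral gap $m_0$ supplied by strict positivity of the second variation.
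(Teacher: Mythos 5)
Your proposal follows the same overall route as the paper: translate $F$ via Lemma~\ref{lm:italiano} so that the boundary function $\vphi$ has small component in $T(\pa E)$, connect $E$ to $F-\sigma$ by the volume-preserving flow of Theorem~\ref{prop:connect}, Taylor-expand $J$ along the flow, kill the $\lambda$-part of the error terms via $\Div X=0$ and the tangential divergence theorem, and close with the spectral gap $m_0$ from Lemma~\ref{coerc}. The one genuine organizational difference is that you pull the quadratic form $\pa^2J(E_t)[X\cdot\nu^{E_t}]$ back to $\pa E$ and use coercivity only at $E$, whereas the paper first proves a uniform spectral-gap estimate for $\pa^2J(F)$ at all sets $F$ which are $W^{2,p}$-close to $E$ (their Step~1, via a compactness/contradiction argument whose hardest point is the $H^{-1}$-continuity of the nonlocal term) and then verifies that the test function $X\cdot\nu^{E_t}$ satisfies the needed approximate orthogonality on $\pa E_t$ (their \eqref{s21}). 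The two strategies require the same continuity estimates, so this is a presentational choice, not a simplification.

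There is, however, one real imprecision. You claim that $4\gamma v_{E_t}+H_{\pa E_t}-\lambda$ is $o_\delta(1)$ in $L^\infty$. This is too strong: the flow in Theorem~\ref{prop:connect} only guarantees $\Phi(\cdot,t)-{\rm Id}$ small in $W^{2,p}(\pa E)$, so $\pa E_t$ is $W^{2,p}$ and $H_{\pa E_t}$ is controlled in $L^p$, not in $L^\infty$. The correct statement (and the one the paper proves in \eqref{c2min8}) is $\|4\gamma v_{E_t}+H_{\pa E_t}-\lambda\|_{L^p(\pa E_t)}\leq\e$, and then the error term must be estimated by H\"older pairing with the $L^{p/(p-1)}$-norm of $\Div_{\tau_t}\bigl(X_{\tau_t}(X\cdot\nu^{E_t})\bigr)$. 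Bounding this last factor by $\|X\cdot\nu^{E_t}\|_{H^1(\pa E_t)}\|X\cdot\nu^{E_t}\|_{L^{2p/(p-2)}(\pa E_t)}$ requires the comparison inequalities of Lemma~\ref{lm:contazzi} (controlling $X_\tau$ and $D_\tau X_\tau$ by $X\cdot\nu^{E_t}$, exploiting that $X$ is nearly normal) together with the Sobolev embedding $H^1(\pa E_t)\hookrightarrow L^{2p/(p-2)}(\pa E_t)$ — this is precisely where the hypothesis $p>\max\{2,N-1\}$ enters, and it is not accounted for in your sketch. Your final estimate $o_\delta(1)\|\vphi\|_{H^1}^2$ is correct, but the argument you give for it does not quite reach it; the fix is exactly the paper's Lemma~\ref{lm:contazzi}.
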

\begin{proof} 
Since all estimates will depend only on  $\|\psi\|_{W^{2,p}(\pa E)}$, by an approximation argument we may assume that $\psi$ is of class $C^{\infty}$.
 Moreover, since different sets are involved we employ the full notation for the normal vectors.
 
  \noindent {\it Step 1 .}  We claim that there exists $\delta_1>0$ such that
 if $F=\{x+\psi(x)\nu
(x):\, x\in \pa E\}$ with $|F|=|E|$ and $\|\psi\|_{W^{2,p}(\pa E)}\leq \delta_1$, then
\beq\label{c2unif}
\inf\Bigl\{\pa^2J(F)[\vphi]:\, \vphi\in \Ht(\pa F)\,, \|\vphi\|_{H^1(\pa F)}=1\,, \Bigl|\int_{\pa F}\vphi\nu^F\, d\H\Bigr|\leq\delta_1\Bigr\}\geq\frac{m_0}2\,,
\eeq
where $m_0$ is defined in \eqref{c0}. 
We argue by contradiction assuming that there exist a sequence $F_h=\{x+\psi_h(x)\nu
(x):\, x\in \pa E\}$ with $|F_h|=|E|$ and $\|\psi_h\|_{W^{2,p}(\pa E)}\to 0$ and a sequence  $\vphi_h\in \Ht(\pa F_h)$, with $\|\vphi_h\|_{H^1(\pa F_h)}=1$ and 
$$
\int_{\pa F_h}\vphi_h\nu^{F_h}\, d\H\to 0
$$
such that 
\beq\label{liminf}
\pa^2J(F_h)[\vphi_h]<\frac{m_0}{2}\,.
\eeq
Consider a family $\Phi_h$ of diffeomorphisms from $E$ to $F_h$ converging  to the identity in $W^{2,p}(\pa E)$, which exists by the convergence of $\psi_h$ to $0$. Set 
$$
a_h:=\medint_{\pa E}\vphi_h\circ \Phi_h\, d\H\qquad\text{and}\qquad
\tilde\vphi_h:=\vphi_h\circ \Phi_h-a_h:
$$
 since $\nu^{ F_h}\circ \Phi_h\to \nu$ in $C^{0,\alpha}(\pa E)$ and a similar convergence holds for the tangential vectors, one easily checks that for those $i$ for which $\nu
\cdot \e_i\not\equiv0$ we have
 $$
 \int_{\pa E}\tilde\vphi_h\nu\cdot\e_i\, d\H\to 0\,,
 $$ 
 so that, using \eqref{projtperp}, 
\beq\label{proj1}
\| \pi_{T^\perp (\pa E)}(\tilde \vphi_h)\|_{H^1(\pa E)}\to 1\,.
\eeq
Moreover, the second fundamental forms and the functions $v_{F_h}$ (see \eqref{vE}) satisfy
\beq\label{proj2}
B_{\pa F_h}\circ \Phi_h\to B_{\pa E}\ \text{in  $L^p(\pa E)$},\qquad v_{F_h}\to v_E\ \text{in $C^{1,\beta}(\T)$ for all $\beta<1$.}
\eeq
Indeed, the first convergence follows immediately by the $W^{2,p}$ convergence of $F_h$ to $E$, while the second one is implied by \eqref{stimauniv}.

We now show that
\beq\label{proj3}
\int_{\partial F_h}\!\int_{\partial F_h}\!\!\!G(x,y)\vphi_h(x)\vphi_h(y)\, d\H d\H-
\int_{\partial E}\!\int_{\partial E}\!\!G(x,y)\tilde\vphi_h(x)\tilde\vphi_h(y)\, d\H d\H \to 0
\eeq
as $h\to\infty$, which in turn is equivalent to proving that 
\beq\label{differenze}
\int_\T\bigl(|\nabla z_{h}|^2-|\nabla \tilde z_{h}|^2\bigr)\, dx\to 0\,,
\eeq
where 
$$
-\Delta z_h=\mu_h:=\vphi_h\H\lfloor\pa F_h\,,\qquad -\Delta \tilde z_h=\tilde\mu_h:=\tilde\vphi_h\H\lfloor\pa E\,,
$$
see \eqref{nonloc}.
Clearly, it is enough to show that $\mu_h-\tilde \mu_h\to 0$ strongly in $H^{-1}(\T)$. 
Indeed, from this convergence it would follow that $z_h-\tilde z_h\to 0$ in $H^1(\T)$ and, in turn, that  \eqref{differenze} holds, since both sequences $z_h$ and 
$\tilde z_h$ are bounded in $H^1(\T)$.    To prove that $\mu_h-\tilde \mu_h\to 0$ strongly in $H^{-1}(\T)$
we fix $w\in H^1(\T)\cap C^1(\T)$,  with $\|w\|_{H^1(\T)}\leq 1$. Then, denoting by $J_{N-1}(d^{\pa E}\Phi_h)$ the Jacobian of $\Phi_h$ on $\pa E$, 
\begin{align*}
\langle \mu_h-\tilde \mu_h, w\rangle&= \int_\T w\, d(\mu_h-\tilde \mu_h)\\
&=
\int_{\pa E}\Bigl[w(\Phi_h(x))\tilde\vphi_h(x)J_{N-1}(d^{\pa E}\Phi_h)(x)-w(x)\tilde\vphi_h(x)\Bigr]\, d\H+a_h\int_{\pa F_h}w\, d\H\\
&=
\int_{\pa E}\tilde\vphi_h(x)\bigl[w(\Phi_h(x))-w(x)\bigr]J_{N-1}(d^{\pa E}\Phi_h)(x)\, d\H\\
&\quad+
\int_{\pa E}\bigl[J_{N-1}(d^{\pa E}\Phi_h)(x)-1\bigr]w(x)\tilde\vphi_h(x)\, d\H+a_h\int_{\pa F_h}w\, d\H\,.
\end{align*}
Therefore we can estimate
\begin{align*}
|\langle \mu_h-\tilde \mu_h, w\rangle|& \leq \|J_{N-1}(d^{\pa E}\Phi_h)\|_{L^\infty(\pa E)}\cdot \|\tilde \vphi_h\|_{L^2(\pa E)}\cdot \|w\circ\Phi_h-w\|_{L^2(\pa E)}\\
&\quad+
c\|J_{N-1}(d^{\pa E}\Phi_h)-1\|_{\infty}\cdot \|\tilde\vphi_h\|_{L^2(\pa E)}\cdot \|w\|_{H^1(\T)}+c|a_h|\|w\|_{H^1(\T)}\,.
\end{align*}
Arguing as in the proof of \eqref{come}, we have
\begin{align*}
\|w\circ\Phi_h-w\|_{L^2(\pa E)}^2& = \int_{\pa E}|w(x+\psi_h(x)\nu
(x))-w(x)|^2 d\H\\
&\leq \int_{\pa E}|\psi_h|^2\int_0^1|\nabla w(x+t\psi_h(x)\nu
(x))|^2\, dtd\H\leq C\|\psi_h\|^2_{\infty}\|\nabla w\|^2_{L^2(\T)}\,.
\end{align*}
Combining all the above estimates, we may conclude that 
$$
\|\mu_h-\tilde \mu_h\|_{H^{-1}(\T)}\leq C\Bigl(\|\psi_h\|_{L^\infty(\pa E)}+\|J_{N-1}(d^{\pa E}\Phi_h)-1\|_{L^\infty(\pa E)}+|a_h|\Bigr)\to 0
$$
thus proving \eqref{proj3}.
 From \eqref{proj1}, \eqref{proj2}, and \eqref{proj3}, recalling that $p>\max\{2, N-1\}$ and using the Sobolev Embedding to
 show that 
 $$
 \int_{\pa F_h}|B_{\pa F_h}|^2\vphi_h^2\, d\H-\int_{\pa E}|B_{\pa E}|^2\tilde \vphi_h^2\, d\H\to 0\,,
 $$
   it follows that all terms 
in the expression \eqref{pa2J} of $\pa^2J(F_h)[\vphi_h]$ are asympotically close to the corresponding terms of $\pa^2J(E)[\tilde\vphi_h]$. Hence $\pa^2J(F_h)[\vphi_h]-\pa^2J(E)[\tilde\vphi_h]\to 0$. Since 
$\pa^2J(E)[\tilde \vphi_h]-\pa^2J(E)[(\tilde \vphi_h)^\perp]\to 0$ and $\|(\tilde \vphi_h)^\perp\|_{H^1(\pa E)}\to 1$,  from Lemma~\ref{coerc} we get a  contradiction to \eqref{liminf}.

\noindent{\it Step 2 .} Let us fix $F$ so that $\|\psi\|_{W^{2,p}(\pa E)}\leq\delta_2<\delta_1$, where $\delta_2$ is to be chosen, and consider the field $X$ and the flow $\Phi$ constructed in Theorem~\ref{prop:connect}. Replacing $F$ by a $F-\sigma$ for some $\sigma\in \R^N$, if needed, thanks to   
Lemma~\ref{lm:italiano} we may assume 
\beq\label{s20}
\Bigl|\int_{\pa E}\psi\nu
\, d\H\Bigr|\leq \frac{\delta_1}2\|\psi\|_{L^2(\pa E)}\,.
\eeq
We claim that 
\beq\label{s21}
\Bigl|\int_{\pa E_t}(X\cdot \nu^{E_t})\nu^{E_t}\, d\H\Bigr|\leq \delta_1\|X\cdot \nu^{E_t}\|_{L^2(\pa E_t)}
\eeq
for all $t\in [0,1]$.  To this aim, we write
\begin{eqnarray*}
&&\int_{\pa E_t}(X\cdot \nu^{E_t})\nu^{E_t}\, d\H \\
&&\quad=\int_{\pa E}(X(\Phi(x,t))\cdot \nu^{E_t}(\Phi(x,t)))\nu^{E_t}(\Phi(x,t))J_{N-1}(d^{\pa E}\Phi(\cdot, t))(x)\, d\H \\
&&\quad= \int_{\pa E}(X(\Phi(x,t))\cdot \nu(x))\nu(x)\, d\H+R_1\\
&&\quad =\int_{\pa E}(X(x)\cdot \nu(x))\nu(x)\, d\H+R_1+R_2\\
&&\quad  =\int_{\pa E}\psi(x)\nu(x)\, d\H+R_1+R_2+R_3\,.
\end{eqnarray*}
Fix $\e>0$. Recalling \eqref{connect7}, \eqref{connecttwo}, \eqref{connect1.5}, and \eqref{connect3},  we have 
$$
\int_{\pa E}|X(\Phi(x,t))|\, d\H\leq C\|\psi\|_{L^2(\pa E)}\,.
$$
From this inequality, observing that by \eqref{campo0}
$$
\|\nu
-\nu^{E_t}(\Phi(\cdot, t))\|_{L^{\infty}(\pa E)}\,, \qquad
\|J_{N-1}(d^{\pa E}\Phi(\cdot, t))-1\|_{L^{\infty}(\pa E)} 
$$
are arbitrarily small, and recalling \eqref{connect0} and \eqref{campo3}  we deduce that
$$
|R_1|+|R_2|+|R_3| \leq \e\|\psi\|_{L^2(\pa E)}\,,
$$
provided that $\delta_2$ is sufficiently small.
 This proves that  
$$
\Bigl|\int_{\pa E_t}(X\cdot \nu^{E_t})\nu^{E_t}\, d\H \Bigr| \leq \Bigl|\int_{\pa E}\psi\nu
\, d\H\Bigr|+ \e\|\psi\|_{L^2(\pa E)}\leq
 \Bigl(\frac{\delta_1}2+\e\Bigr)\|\psi\|_{L^2(\pa E)}\,,
$$
where we used also \eqref{s20}. A similar argument shows that
\beq\label{s22}
\|X\cdot \nu^{E_t}\|_{L^2(\pa E_t)}\geq (1-\e)\|\psi\|_{L^2(\pa E)}\,,
\eeq
and thus  \eqref{s21} follows, if $\e$ and, in turn, $\delta_2$ are suitably chosen.

Recalling \eqref{eq:J''}, \eqref{pa2J},  the fact that $E$ is a critical set for $J$ and that $\Div X=0$ in a neighborhood of $\pa E$, we can write
\begin{align*}
&J(F)-J(E) =J(E_1)-J(E)= \frac12\int_0^1(1-t)\frac{d^2}{dt^2}J( E_t)\, dt\\
&=\frac12\int_0^1(1-t)\biggl(\pa^2J( E_t)[ X\cdot \nu^{E_t}]
-\int_{\pa  E_t}(4\gamma v_{ E_t}+H_t)\Div_{\tau_t}\bigl(X_{\tau_t}(X\cdot \nu^{E_t})\bigr)\,d\H\biggr)\, dt\,,
\end{align*}
where $\Div_{\tau_t}$ stands for the tangential divergence on $\pa E_t$, we set
$ X_{\tau_t}:= X-( X\cdot\nu^{ E_t})\nu^{ E_t}$, and $H_t$ is the sum of principal 
curvatures of $\pa E_t$.
By \eqref{s21} and \eqref{c2unif}, we obtain that
\begin{align}
J(F)-J(E)& \geq \frac{m_0}{4}\int_0^1(1-t)\|X\cdot \nu^{E_t}\|^2_{H^1(\pa  E_t )}\, dt\nonumber \\
&\quad -\frac12
\int_0^1(1-t)\int_{\pa  E_t}(4\gamma v_{E_t}+H_t)\Div_{\tau_t}\bigl(X_{\tau_t}(X\cdot \nu^{ E_t})\bigr)\, d\H dt\,.\label{c2min4}
\end{align}
  We claim that 
\beq\label{c2min7}
I_t:=\biggl|\int_{\pa E_t}(4\gamma v_{ E_t}+H_t)\Div_{\tau_t}\bigl(X_{\tau_t}(X\cdot \nu^{ E_t})\bigr)\, d\H\biggr|\leq \frac{m_0}{4} \|X\cdot \nu^{ E_t}\|^2_{H^1(\pa E_t )}
\eeq
for all $t\in [0,1]$, provided that $\delta_2$ is sufficiently small.

Indeed, recalling that $E$ satisfies \eqref{ELJ}, we get
\begin{align}
I_t&=\biggl|\int_{\pa  E_t}\bigl[(4\gamma v_{E_t}+H_t)-\lambda\bigr]\Div_{\tau_t}\bigl( X_{\tau_t}( X\cdot \nu^{ E_t})\bigr)\, d\H\biggr|\nonumber\\
&\leq \|(4\gamma v_{ E_t}+H_t)-\lambda\|_{L^p(\pa  E_t)}
\|\Div_{\tau_t}\bigl( X_{\tau_t}(X\cdot \nu^{E_t})\bigr)\|_{L^{\frac{p}{p-1}}(\pa  E_t)} \,.  
  \label{c2min8}
\end{align}
Observe that, given $\e>0$, if $\delta_2$ is sufficiently small the first norm on the right-hand side of \eqref{c2min8} can be taken smaller than $\e$. Hence, using Lemma~\ref{lm:contazzi} below we get 
\begin{align*}
I_t & \leq c\e\Bigl[\|D_{\tau_t}\bigl(X_{\tau_t}\bigr)\|_{L^2(\pa E_t)}\|X\cdot \nu^{ E_t}\|_{L^{\frac{2p}{p-2}}(\pa E_t)}+\|X_{\tau_t}\|_{L^{\frac{2p}{p-2}}(\pa E_t)}\|D_{\tau_t}\bigl(X\cdot \nu^{ E_t}\bigr)\|_{L^{2}(\pa E_t)}\Bigr]\\
& \leq c\e\|X\cdot \nu^{E_t}\|_{H^{1}(\pa E_t)}\| X\cdot \nu^{ E_t}\|_{L^{\frac{2p}{p-2}}(\pa E_t)}\,.
\end{align*}
Since $p>\max\{2, N-1\}$, from the Sobolev Embedding Theorem we obtain 
$$
I_t\leq c\e \|X\cdot \nu^{ E_t}\|^2_{H^{1}(\pa E_t)}\,,
$$ 
hence \eqref{c2min7} follows.

We now  observe that from \eqref{c2min4} and \eqref{c2min7} we have 
\begin{align*}
J(F)& \geq J(E)+\frac{m_0}{8}\int_0^1(1-t)\| X\cdot \nu^{E_t}\|^2_{H^{1}(\pa  E_t)}\, dt\geq
J(E)+
\frac{m_0}{8}\int_0^1(1-t)\| X\cdot \nu^{ E_t}\|^2_{L^2(\pa E_t)}\, dt\,. 
\end{align*}
Recalling \eqref{s22}, we finally get 
$$
J(F) \geq J(E)+\frac{m_0}{32}\|\psi\|^2_{L^2(\pa E)}\geq J(E)+C_0|E\triangle F|^2\,.
$$
This concludes the proof of the theorem.
\end{proof}


\section{$W^{2,p}$-local minimality implies  $L^1$-local minimality}\label{sec:locmin}

We start by proving the following simple lemma.
\begin{lemma}\label{lm:noname}
Let $E\subset\T$ be of class $C^2$ and let $F\subset\T$ be a set of finite perimeter. Then there exists $C=C(E)>0$ such that 
$$
P_\T(F)-P_\T(E)\geq -C|E\triangle F|\,.
$$
\end{lemma}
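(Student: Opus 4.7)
\medskip
\noindent\textbf{Proof plan.} The idea is to calibrate $E$ by a compactly supported vector field that extends the outer normal, and then compare perimeters via the divergence theorem.

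\emph{Step 1: Construction of a calibration.} Since $E$ is of class $C^2$, the signed distance function $d$ from $\partial E$ is of class $C^2$ in some tubular neighborhood $\mathcal{N}_{2r}(\partial E)$, with $|\nabla d|\equiv 1$ there and $\nabla d=\nu^E$ on $\partial E$. Pick a cutoff $\eta\in C^1_c(\mathcal{N}_{2r}(\partial E))$ with $\eta\equiv 1$ on $\mathcal{N}_r(\partial E)$ and $0\leq\eta\leq 1$, and set
$$
X(x):=\eta(x)\nabla d(x)\qquad\text{for }x\in\T,
$$
extended by $0$ outside $\mathcal{N}_{2r}(\partial E)$. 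Then $X\in C^1(\T;\R^N)$, $|X|\leq 1$, $X=\nu^E$ on $\partial E$, and
$$
\|\Div X\|_{L^\infty(\T)}\leq \|\eta\,\Delta d\|_{L^\infty}+\|\nabla\eta\cdot\nabla d\|_{L^\infty}=:C(E)<\infty.
$$

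\emph{Step 2: Comparison of perimeters.} By the very definition of the total variation, since $|X|\leq 1$ and $X\in C^1(\T;\R^N)$, one has for every set of finite perimeter $A\subset\T$
$$
\int_A \Div X\,dx\leq P_\T(A).
$$
Applied to $A=F$, this gives $\int_F\Div X\,dx\leq P_\T(F)$. On the other hand, $E$ is of class $C^2$ and $X\cdot\nu^E=1$ on $\partial E$, so the divergence theorem yields
$$
\int_E \Div X\,dx=\int_{\partial E} X\cdot\nu^E\,d\H=\H(\partial E)=P_\T(E).
$$

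\emph{Step 3: Conclusion.} Combining the two relations,
$$
P_\T(F)-P_\T(E)\geq \int_F\Div X\,dx-\int_E\Div X\,dx=\int_\T \Div X\,(\chi_F-\chi_E)\,dx\geq -\|\Div X\|_{L^\infty(\T)}|E\triangle F|,
$$
which is the claimed inequality with $C=C(E)$ from Step~1.

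\emph{Main obstacle.} There is no real obstacle: the only point requiring some care is the existence of a $C^1$ extension of $\nu^E$ to a tubular neighborhood with norm bounded by $1$, which is immediate from the $C^2$-regularity of the signed distance function. Everything else is a direct application of the divergence theorem together with the $L^1$-lower bound $\int_F\Div X\,dx\leq P_\T(F)$ valid for any smooth subunit vector field.
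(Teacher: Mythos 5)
Your proof is correct and follows the same calibration argument as the paper's own proof: choose a $C^1$ vector field $X$ with $|X|\le 1$ on $\T$ agreeing with $\nu^E$ on $\partial E$, compare $P_\T(F)\ge\int_F\Div X\,dx$ with the exact identity $P_\T(E)=\int_E\Div X\,dx$, and bound the difference by $\|\Div X\|_\infty|E\triangle F|$. The only difference is that you make the construction of $X$ explicit via the signed distance function and a cutoff, whereas the paper simply asserts the existence of such a field.
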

\begin{proof}
Let   $X\in C^1(\T; \R^N)$ be a  vector field such that $\|X\|_{\infty}\leq 1$ and $X=\nu^E$ on $\pa E$. Then,
\begin{align*}
P_\T(F)-P_\T(E)&\geq \int_{\pa^* F}X\cdot\nu^F\, d\H- \int_{\pa E}X\cdot\nu^E\, d\H\\
&=\int_F\Div X\, dx-\int_E\Div X\, dx\geq
-C|E\triangle F|\,,\end{align*}
where $C:=\|\Div X\|_{\infty}$.
\end{proof}
Theorem~\ref{th:inftymin} below shows that if $E$ is a smooth isolated $W^{2,p}$-local minimizer of $J$, in the sense of Theorem~\ref{th:c2min}, then
$E$ is also a minimizer among all competitors which are sufficiently close in the Hausdorff distance. Some points  in the proof are adapted from \cite{white}, see also \cite{FM}. In the proof the theorem we will make use of  an important regularity result concerning sequences of $\omega$-minimizers of the area functional. This is essentially contained in \cite{Alm} (see also \cite{SS, Ta2, white}).  
\begin{theorem}\label{th:white}
Let $E_h\subset\T$ be  a sequence of $\omega$-minimizers of the area functional such that
$$
\sup_h\P(E_h)<\infty \qquad\text{and}\qquad \chi_{E_h}\to \chi_E\quad\text{in $L^1(\T)$}
$$
for some set $E$   of class $C^2$. Then,  for $h$ large enough $E_h$ is of class $C^{1, \frac12}$  and 
$$
\pa E_h=\{x+\psi_h(x)\nu(x):x\in \pa E\}\,,
$$
with $\psi_h\to 0$ in $C^{1, \alpha}(\pa E)$ for all $\alpha\in (0, \frac12)$.
\end{theorem}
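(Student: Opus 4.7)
The plan is to combine the classical density estimates valid for $\omega$-minimizers with Almgren's $\varepsilon$-regularity theorem to obtain uniform $C^{1,1/2}$ regularity of $E_h$, and then to upgrade this to $C^{1,\alpha}$ convergence of the graph representations by a simple compactness/interpolation argument.

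First I would establish uniform density estimates. Comparing $E_h$ with the competitors $E_h\cup B_r(x)$ and $E_h\setminus B_r(x)$ and exploiting the $\omega$-minimality inequality, one finds constants $c_0,r_0>0$ independent of $h$ such that
$$
c_0 r^N \leq |E_h\cap B_r(x)|\leq (\omega_N-c_0)r^N, \qquad \H(\partial E_h\cap B_r(x))\geq c_0 r^{N-1}
$$
for every $x\in\partial E_h$ and every $r\in(0,r_0)$. Combined with the $L^1$-convergence $\chi_{E_h}\to\chi_E$, these density estimates force Kuratowski (hence Hausdorff) convergence of $\partial E_h$ to $\partial E$.

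Next, since $E$ is of class $C^2$, at every point of $\partial E$ and at every sufficiently small scale the spherical excess of $E$ is as small as we wish. By Hausdorff convergence of the boundaries together with the uniform density bounds, the excess of $E_h$ at any point $x\in\partial E_h$ and at a scale $r\in(0,r_0)$ depending only on $E$ becomes smaller than Almgren's universal threshold $\varepsilon_0$, uniformly in $x\in\partial E_h$, provided $h$ is large. The $\varepsilon$-regularity theorem for $\omega$-minimizers (\cite{Alm, SS, Ta2, white}) then yields, for $h$ large, that $\partial E_h$ is everywhere $C^{1,1/2}$ with a uniform bound on the $C^{1,1/2}$ norm; in particular the singular set is empty.

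By the $C^2$-regularity of $E$ there is a tubular neighborhood $\mathcal N_r(\partial E)$ on which the nearest-point projection $\pi$ is smooth. For $h$ large, Hausdorff convergence forces $\partial E_h\subset \mathcal N_{r/2}(\partial E)$, and the uniform $C^{1,1/2}$ bound ensures that the unit normals of $E_h$ are close to those of $E$ at corresponding points, so that $\pi|_{\partial E_h}$ is a $C^{1,1/2}$-diffeomorphism onto $\partial E$. Inverting it produces $\partial E_h=\{x+\psi_h(x)\nu(x):x\in\partial E\}$ with $\|\psi_h\|_{C^{1,1/2}(\partial E)}$ uniformly bounded and $\|\psi_h\|_{L^\infty(\partial E)}\to 0$. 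The conclusion $\psi_h\to 0$ in $C^{1,\alpha}(\partial E)$ for every $\alpha\in(0,1/2)$ then follows by Arzel\`a--Ascoli applied to any subsequence, equivalently by interpolation between the bounded $C^{1,1/2}$ norms and the vanishing $L^\infty$ norms. The main obstacle is the uniform $\varepsilon$-regularity step: one must check the flatness hypothesis of Almgren's theorem at every point of $\partial E_h$, uniformly in $h$ and at a scale bounded away from zero. This is exactly where the $C^2$ hypothesis on $E$ meets the Kuratowski convergence of $\partial E_h$ to $\partial E$, and it is the only non-trivial input beyond classical regularity theory for almost-minimizers.
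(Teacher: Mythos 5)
The paper does not actually prove Theorem~\ref{th:white}: it cites \cite{Alm} (see also \cite{SS,Ta2,white}) as containing it, so there is no internal proof to compare against. Your outline is the standard compactness-and-$\varepsilon$-regularity argument underlying those references, and it is correct in overall structure: uniform density estimates give Kuratowski convergence of the boundaries, then $\varepsilon$-regularity at a fixed scale yields uniform $C^{1,1/2}$ bounds for $h$ large, and compactness/interpolation upgrades the $L^\infty$-closeness to $C^{1,\alpha}$-convergence for $\alpha<\tfrac12$.

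The one step that remains genuinely incomplete is exactly the one you flag: deducing, from Hausdorff convergence of $\partial E_h$ to the $C^2$ hypersurface $\partial E$, that the \emph{excess} of $E_h$ drops below the $\varepsilon$-regularity threshold at a fixed scale, uniformly in $h$ large and $x\in\partial E_h$. Hausdorff convergence gives a flatness (height) bound, whereas the excess measures the oscillation of the measure-theoretic normal; the passage from one to the other does use $\omega$-minimality and must be spelled out. Two further ingredients close the gap. First, $\omega$-minimality combined with the $L^1$-convergence forces $\P(E_h)\to\P(E)$ (lower semicontinuity plus a comparison with $E$ in large balls), hence $\H\lfloor\partial E_h\wto\H\lfloor\partial E$ weakly-$*$. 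Second, one needs a height-controls-excess lemma for $\omega$-minimizers: if $\partial E_h\cap B_r(x)$ is trapped in a slab of thickness $\e r$ around a hyperplane $H$ through $x$, then comparing $E_h$ with the half-ball cut by the slab gives $\P(E_h;B_r(x))\leq\omega_{N-1}r^{N-1}(1+C\e)+\omega r^N$, while the projected area $\int_{\partial^*E_h\cap B_r(x)}\nu^{E_h}\cdot\nu_H\,d\H\geq\omega_{N-1}r^{N-1}(1-C\e)$ once the boundary is trapped in the slab; so the excess is $O(\e)+\omega r$. Choosing $r$ small first (so that $\omega r$ is below half the threshold) and then $h$ large (so the slab thickness is small) gives the claim. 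A similar caveat applies to the assertion that ``the uniform $C^{1,1/2}$ bound ensures that the unit normals of $E_h$ are close to those of $E$''; this too is a consequence of Arzel\`a--Ascoli plus uniqueness of the subsequential limit (forced by the density estimates and Hausdorff convergence), rather than a formal consequence of the uniform H\"older bound. With these points filled in, the remaining steps (graph representation via nearest-point projection, interpolation) go through as you describe.
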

Recalling that $d$ denotes the signed distance to a set $E$, we define for all $\de\in \R$
$$\mathcal{I}_{\de}(E)=\{x:d(x)<\de\}\;.$$
We are now ready to state the $L^{\infty}$-local minimality result. 
\begin{theorem}\label{th:inftymin}
Let $E\subset \T$ be a smooth set and $p>1$. Assume that there exists $\delta>0$ such that 
\beq\label{infty1}
J(F)\geq J(E)
\eeq
for all  $F\subset \T$, with  $|F|=|E|$ and such that 
$\pa F=\{x+\psi(x)\nu(x):\, x\in \pa E\}$ for some function $\psi$ with $\|\psi\|_{W^{2,p}(\pa E)}\leq \delta$.
Then there exists $\delta_0>0$ such that \eqref{infty1} holds for all $F\subset \T$ of finite perimeter, with $|F|=|E|$ and
$ \mathcal I_{-\delta_0}(E)\subset F\subset \mathcal I_{\delta_0}(E)$.
\end{theorem}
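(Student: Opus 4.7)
The plan is to argue by contradiction. Suppose no $\delta_0$ works; then for every $h\in\N$ there exists $F_h\subset\T$ with $\mathcal{I}_{-1/h}(E)\subset F_h\subset \mathcal{I}_{1/h}(E)$, $|F_h|=|E|$ and $J(F_h)<J(E)$. In particular $F_h\to E$ in $L^1$. Setting $\eta_h:=|F_h\triangle E|$, the idea is to replace $F_h$ by a far more regular competitor $\tilde F_h$ that still satisfies $J(\tilde F_h)<J(E)$. Following the penalization strategy announced in the introduction, I would fix $\Lambda>0$ large (independent of $h$) and $\Lambda_h\to\infty$, and let $\tilde F_h$ be a minimizer of
\[
\mathcal{J}_h(G)\,:=\,J(G)+\Lambda\,\lvert|G|-|E|\rvert+\Lambda_h\bigl(|G\triangle E|-2\eta_h\bigr)_+,\qquad G\subset\T.
\]
Since $F_h$ itself is a competitor, $J(\tilde F_h)\leq\mathcal{J}_h(\tilde F_h)\leq J(F_h)<J(E)$; a standard cutting/barrier argument, with $\Lambda_h$ chosen large enough, shows that the last penalty is \emph{strictly inactive} at $\tilde F_h$, so $|\tilde F_h\triangle E|<2\eta_h\to 0$ and $\tilde F_h$ is a \emph{free} (unconstrained) local minimizer of the volume-penalized functional $J(\cdot)+\Lambda\,\lvert|\cdot|-|E|\rvert$.

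From here the argument parallels Proposition~\ref{prop:EF}\,--\,Theorem~\ref{th:regularity}: Lemma~\ref{lm:v} absorbs the nonlocal term's variation and $\Lambda$ absorbs the volume term, so that $\tilde F_h$ is an $\omega$-minimizer of the perimeter with uniform $\omega$. Since $\tilde F_h\to E$ in $L^1$ and $E$ is smooth, Theorem~\ref{th:white} yields, for $h$ large, that $\pa\tilde F_h=\{x+\psi_h(x)\nu(x):x\in\pa E\}$ with $\psi_h\to 0$ in $C^{1,\alpha}(\pa E)$. Writing the Euler--Lagrange equation $H_{\pa\tilde F_h}+4\gamma v_{\tilde F_h}=\lambda_h$ in local coordinates on $\pa E$ as a uniformly elliptic quasilinear PDE for $\psi_h$, whose right-hand side is bounded in $C^{0,\alpha}(\pa E)$ thanks to the universal estimate \eqref{stimauniv} and the $C^{1,\alpha}$-convergence just obtained, Schauder estimates and a standard bootstrap upgrade the convergence to $\psi_h\to 0$ in $W^{2,p}(\pa E)$ for every $p<\infty$.

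A final volume correction is needed since the penalty only forces $|\tilde F_h|\to|E|$, not equality. Translating the graph $\psi_h$ by a constant $c_h=O\bigl(\lvert|\tilde F_h|-|E|\rvert\bigr)$ yields a set $F_h^\ast$ with $|F_h^\ast|=|E|$, $\|\psi_h^\ast\|_{W^{2,p}(\pa E)}\to 0$, and $J(F_h^\ast)\le J(\tilde F_h)+C\,\lvert|\tilde F_h|-|E|\rvert<J(E)$ provided $\Lambda>C$ is chosen from the outset. For $h$ large, $\|\psi_h^\ast\|_{W^{2,p}(\pa E)}<\delta$, so hypothesis~\eqref{infty1} forces $J(F_h^\ast)\ge J(E)$, the desired contradiction. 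The main obstacle I foresee is the first step: verifying that the soft $L^1$-penalty is strictly inactive at every minimizer $\tilde F_h$. This reduces to a density/cutting estimate comparing the perimeter cost of a local modification against the penalty $\Lambda_h(|G\triangle E|-2\eta_h)_+$, and should follow from the $L^1$ vs.\ $L^{(N-1)/N}$ scaling in the relative isoperimetric inequality once $\Lambda_h$ is taken sufficiently large.
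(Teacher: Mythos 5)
Your overall skeleton matches the paper's: argue by contradiction, replace the bad sequence $E_h$ by minimizers of a penalized problem, upgrade their convergence to $W^{2,p}$ via $\omega$-minimality and Theorem~\ref{th:white}, and conclude against the $W^{2,p}$-local minimality hypothesis. The structural parallel with Proposition~\ref{prop:EF}, Lemma~\ref{lm:v}, and Theorem~\ref{th:white} is exactly what the paper exploits, and the final volume correction by a constant shift of the graph, using the bound from $\Lambda\bigl||\tilde F_h|-|E|\bigr|$, is a workable way to substitute for the paper's Step~1 (which instead proves $|F_h|=|E|$ directly).

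However, the step you flag as the ``main obstacle'' is a genuine gap, and it is precisely where your route diverges from the paper's. You penalize with the soft term $\Lambda_h\bigl(|G\triangle E|-2\eta_h\bigr)_+$ and need it to be \emph{strictly inactive} at every minimizer. This is not a cosmetic issue: if the penalty is active, the functional you are minimizing has Lipschitz constant of order $\Lambda_h\to\infty$ with respect to the $L^1$-distance, so the resulting $\omega$-minimality constant blows up and Theorem~\ref{th:white} does not apply; moreover, the Euler--Lagrange equation picks up a term of size $\Lambda_h$, spoiling the Schauder bootstrap. Your sketched density/cutting argument does not obviously give strict inactivity: a lower-bound estimate of the type $J(G)\geq J(E)-C|G\triangle E|$ (Lemma~\ref{lm:noname} plus Lemma~\ref{lm:v}) combined with $\mathcal{J}_h(\tilde F_h)\leq J(F_h)<J(E)$ only shows that the excess $|\tilde F_h\triangle E|-2\eta_h$ is small of order $\eta_h/\Lambda_h$, not zero; and the relative isoperimetric inequality does not obviously forbid saturation $|\tilde F_h\triangle E|=2\eta_h$, since reducing $|G\triangle E|$ by a local modification can simultaneously convert part of $E\cap\tilde F_h$ into $E\setminus\tilde F_h$, leaving the symmetric difference unchanged. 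The paper sidesteps this issue entirely by using the \emph{hard} obstacle constraint $\mathcal{I}_{-\delta_h}(E)\subset F\subset\mathcal{I}_{\delta_h}(E)$ and then showing (its Step~2) that the constrained minimizer also minimizes the unconstrained functional
$J(F)+\Lambda\bigl||F|-|E|\bigr|+2\Lambda|F\triangle\mathcal{K}_h(F)|$,
where $\mathcal{K}_h(F):=(F\cup\mathcal{I}_{-\delta_h}(E))\cap\mathcal{I}_{\delta_h}(E)$. The penalty $2\Lambda|F\triangle\mathcal{K}_h(F)|$ has Lipschitz constant $2\Lambda$ \emph{uniformly} in $h$, because projecting $F$ onto the obstacle region costs at most $\|\Div\nu\|_\infty|F\triangle\mathcal{K}_h(F)|$ in perimeter (the boundaries of the tubular neighborhoods have bounded mean curvature). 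That geometric structure is exactly what your soft penalty on $|G\triangle E|$ lacks, and it is the key reason the paper's formulation yields the uniform $\omega$-minimality you need.

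So to repair your argument you would either need a genuine proof that the soft penalty is strictly inactive (which I do not see how to carry out), or you should switch to the paper's hard obstacle formulation and the $\mathcal{K}_h$ reformulation, which is tailor-made to produce an $h$-independent $\omega$.
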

\begin{proof}
We argue by contradiction assuming that there exist  two sequences $\de_h\to 0$ and  $E_h\subset\T$ such that $|E_h|=|E|$, 
$\mathcal I_{-\de_h}(E)\subset E_h\subset \mathcal{I}_{\de_h}(E)$, and
$$
J(E_h)<J(E)
$$
for all $h$. For every $h$ let $F_h$ be a minimizer of the  penalized obstacle problem
\beq\label{infty3}
\min\{J(F)+\Lambda \bigl||F|-|E|\bigr|:\,\mathcal{I}_{-\de_h}(E)\subset F\subset \mathcal{I}_{\de_h}(E)\}\,,
\eeq
where $\Lambda>1$ will be chosen later. Clearly,
\beq\label{infty9}
J(F_h)\leq J(E_h)<J(E)\,.
\eeq
We split the proof into four steps.

\noindent{\it Step 1.}  We claim that for $\Lambda>0$ sufficiently large
\beq\label{infty8}
|F_h|=|E|\,. 
\eeq
Indeed, assume by contradiction that
$|F_h|\neq|E|$. We consider the case $|F_h|<|E|$.  We define 
$$
\widetilde F_h=F_h\cup \mathcal I_{\tau_h}(E)
$$
for some $\tau_h\in (-\de_h, \de_h)$ such that $|\widetilde F_h|=|E|$. Set $\nu:=\nabla d$. Since $\pa^*\widetilde F_h$ can be decomposed in three disjoint parts, one contained in $\pa^* F_h\setminus \partial I_{\tau_h}(E)$, another contained in 
$\partial I_{\tau_h}(E)\setminus \pa^* F_h$, and the third one given by 
$\{x\in \pa^*F_h\cap \pa  I_{\tau_h}(E):\, \nu^{F_h}(x)=\nu^{\mathcal I_{\tau_h}(E)}(x)\}$, and since $\nu^{\mathcal I_{\tau_h}(E)}= \nu$, we have 
$$
 \P(\widetilde F_h)-\P(F_h)\leq \int_{\pa^*\widetilde F_h}\nu\cdot \nu^{\widetilde F_h}\, d\H-
 \int_{ \pa^*F_h}\nu\cdot \nu^{ F_h}\, d\H\,.
$$
Hence, also by Lemma~\ref{lm:v},
\begin{align}
J(\widetilde F_h)&+\Lambda \bigl||\widetilde F_h|-|E|\bigr|-
J(F_h)-\Lambda \bigl||F_h|-|E|\bigr|\nonumber\\
&= \P(\widetilde F_h)-\P(F_h)
+\gamma\int_{\T}\bigl(|\nabla v_{\widetilde F_h}|^2-|\nabla v_{ F_h}|^2\bigr)\, dx-
\Lambda \bigl(|\widetilde F_h|-|F_h|\bigr)\nonumber\\
&\leq \int_{\pa^*\widetilde F_h}\nu\cdot \nu^{\widetilde F_h}\, d\H-
 \int_{ \pa^*F_h}\nu\cdot \nu^{ F_h}\, d\H+(\gamma C-\Lambda)\bigl(|\widetilde F_h|-|F_h|\bigr)\nonumber\\
 &\leq \int_{\widetilde F_h\triangle F_h}|\Div\nu|\, dx+(\gamma C-\Lambda)\bigl(|\widetilde F_h|-|F_h|\bigr)\nonumber\\
 &\leq (\|\Div \nu\|_{\infty}+\gamma C-\Lambda)\bigl(|\widetilde F_h|-|F_h|\bigr)\,. \label{infty3bis}
\end{align}
Thus, if 
\beq\label{infty4}
\Lambda>\|\Div \nu\|_{\infty}+\gamma C
\eeq
 the  last term of the previous inequality is negative, thus contradicting the minimality of $F_h$. If $|F_h|>|E|$, we argue similarly.

\noindent{\it Step 2.} For any set $F$, we set $\mathcal K_h(F):=(F\cup \mathcal I_{-\delta_h}(E))\cap \mathcal I_{\delta_h}(E)$. We claim that $F_h$ solves the penalized problem (without obstacle)
\beq\label{infty5}
\min\{J(F)+\Lambda\bigl||F|-|E|\bigr|+2\Lambda|F\triangle \mathcal K_{h}(F)|:\, F\subset\T\}\,.
\eeq
Indeed, let $\widetilde J$ denote the functional in \eqref{infty5}. Writing $\mathcal K_h$ for $\mathcal K_h(F)$,  using Lemma~\ref{lm:v} and arguing as in 
\eqref{infty3bis}, we obtain by the minimality of $F_h$
\begin{align*}
\widetilde J(F)&-\widetilde J(F_h)=J( \mathcal K_h)+
\Lambda\bigl| |\mathcal K_h|-|E|\bigr|- \bigl[J(F_h)+\Lambda\bigl||F_h|-|E|\bigr|\bigr]\\
&\quad +\bigl[\P(F)- \P(\mathcal K_h)]+
\gamma\int_\T|\nabla v_F|^2\, dx-\gamma\int_\T|\nabla v_{\mathcal K_h}|^2\, dx\\
&\quad +\Lambda\bigl(\bigl||F|-|E|\bigr|- \bigl||\mathcal K_h|-|E|\bigr|\bigr) +2\Lambda|F\triangle \mathcal K_h|\\
&\geq -\|\Div \nu\|_{\infty}|F\triangle \mathcal K_h|-\gamma C|F\triangle \mathcal K_h|+\Lambda|F\triangle \mathcal K_h|>0\,,
\end{align*}
where in the last inequality we used \eqref{infty4}.

\noindent{\it Step 3.} We claim that for $h$ large enough $F_h$ is of class $C^{1,\frac12}$ and 
$$
\pa F_h=\{x+\psi_h(x)\nu(x):x\in \pa E\}\,,
$$
for some $\psi_h$ such that  $\psi_h\to 0$ in $C^{1, \alpha}(\pa E)$ for all $\alpha\in (0, \frac12)$. To this aim we observe that $F_h$  solves 
\eqref{infty5}, thus it is a $4\Lambda$-minimizer of the area functional. By Theorem~\ref{th:white} the claim follows.

\noindent{\it Step 4.} We claim that $\psi_h\to 0$ in $W^{2, p}(\pa E)$ for all $p>1$. To this aim, we first observe that since $F_h$ is a $C^1$ solution of the minimum problem \eqref{infty5}, a standard variation argument (see Step 2 of the proof of Proposition 7.41 in 
\cite{AFP}) yields 
\beq\label{infty5bis}
\sup_h\|H_{\pa F_h}\|_{L^{\infty}(\pa F_h)}\leq 4\Lambda\,,
\eeq
where, we recall, $H_{\pa F_h}$  denotes the sum of the principal curvatures of $\pa F_h$. Since the functions $\psi_h$ are equibounded in $C^{1,\alpha}$, the above estimate on the curvatures implies that for all $p>1$ the functions $\psi_h$ are equibounded in $W^{2,p}(\pa E)$, thanks to Remark~\ref{rm:lemmino}.
Recall now that, due to \eqref{infty8}, each $F_h$ is a  solutions of the obstacle problem \eqref{infty3} under the volume constraint. Since $F_h$ is of class $W^{2,p}$, we have that $H_{\pa F_h}=f_h$, where
\beq\label{infty5ter}
f_h:=
\begin{cases}
\lambda_h-4\gamma v_{F_h} & \text{in $A_h:=\pa F_h\cap \mathcal N_{\delta_h}(\pa E)$,}\\
\lambda-4\gamma v_E+\rho_h & \text{otherwise}\,,
\end{cases}
\eeq
 $\lambda_h$ and $\lambda$ are the volume constraint Lagrange multipliers  corresponding to $F_h$ and $E$, respectively, 
and  $\rho_h$ is a remainder term converging uniformly to  $0$.

We claim that
\beq\label{infty6}
 H_{\pa F_h}\bigl(\cdot+\psi_h(\cdot)\nu(\cdot)\bigr)\to H_{\pa E}(\cdot)\qquad\text{in $L^{p}( \pa E)$ for all $p>1$.}
 \eeq
To this aim, first observe  that  
\beq\label{recente3}
v_{F_h}\to v_{E} \quad\text{ in $C^{1}(\T)$}
\eeq
 by \eqref{stimauniv} and Lemma~\ref{lm:v}.
Moreover, from \eqref{infty5bis} we have that the sequence $\lambda_h$ is bounded. 

If $\H(A_h)\to 0$, where $A_h$ is defined in \eqref{infty5ter}, then \eqref{infty6} follows immediately.
Otherwise, (with no loss of generality) we have  $\H(A_h)\geq c>0$ and we argue as follows. By a compactness argument we may find a cylinder 
$C=B'\times(-L,L)$, where $B'\subset\R^{N-1}$ is a ball centered at the origin,  and functions $g_h$, $g\in W^{2,p}(B'; (-L,L))$ such that, upon rotating and relabeling the coordinate axes if necessary, we have $E\cap C=\{(x',x_n)\in B'\times(-L,L):\, x_n<g(x')\}$, 
\beq\label{infty7}
 F_h\cap C=\{(x',x_n)\in B'\times(-L,L):\, x_n<g_h(x')\}\,, \qquad\text{and}\quad \H(A_h\cap C)\geq c'>0
\eeq
for all $h$. Moreover, by Step 3 we also have
\beq\label{infty6bis}
g_h\to g \qquad\text{in $C^{1, \alpha}(\overline {B'})$ for all $\alpha\in (0,\tfrac12)$.}
\eeq
Denote by $A'_h$ the projection of $A_h\cap C$ over $B'$.  Then from \eqref{infty5ter} we have
\begin{align*}
&\lambda_h \H(A'_h)-4\gamma\int_{A'_h}v_{F_h}(x', g_h(x'))\, d\H(x')\nonumber \\
&+\lambda  \H(B'\setminus A'_h)-4\gamma \int_{B'\setminus A'_h}v_{E}(x', g(x'))\, d\H(x')+\omega_h\nonumber\\
&\qquad=\int_{B'}\Div\biggl(\frac{\nabla_{x'} g_h}{\sqrt{1+|\nabla_{x'} g_h|^2}}\biggr)\, d\H(x')=
\int_{\pa B'}\frac{\nabla_{x'} g_h}{\sqrt{1+|\nabla_{x'} g_h|^2}}\cdot \frac{x'}{|x'|}\, d\mathcal{H}^{N-2}
\end{align*}
with $\omega_h\to 0$.
Since by \eqref{infty6bis}
\begin{align*}
\int_{\pa B'}\frac{\nabla_{x'} g_h}{\sqrt{1+|\nabla_{x'} g_h|^2}}\cdot \frac{x'}{|x'|}\, d\mathcal{H}^{N-2}&\to 
\int_{\pa B'}\frac{\nabla_{x'} g}{\sqrt{1+|\nabla_{x'} g|^2}}\cdot \frac{x'}{|x'|}\, d\mathcal{H}^{N-2}\\
&=
\int_{B'}\Div\biggl(\frac{\nabla_{x'} g}{\sqrt{1+|\nabla_{x'} g|^2}}\biggr)\, d\H(x')\\
&=\lambda  \H(B')-4\gamma \int_{B'}v_{E}(x', g(x'))\, d\H(x')\,,
\end{align*}
recalling \eqref{recente3}, we conclude that 
$$
(\lambda_h-\lambda)\H(A'_h)\to 0\,.
$$
As   $\H(A'_h)\geq c''>0$ by \eqref{infty7}, we obtain \eqref{infty6}. In turn, by Lemma~\ref{lemmino} we conclude that 
$\psi_h\to 0$ in $W^{2,p}(\pa E)$ for all $p>1$.  Thus, by Theorem~\ref{th:c2min} and recalling \eqref{infty8}, we have that
$J(F_h)\geq J(E)$ for $h$ sufficiently large, a contradiction to \eqref{infty9}.
\end{proof}

We are now ready to prove the main result of the paper.

\begin{proof}[Proof of Theorem~\ref{th:l1min}.]
We argue by contradiction assuming that there exists a sequence $E_h\subset\T$, with $|E_h|=|E|$, such that $\alpha(E_h,E)\to 0$ and 
\beq\label{l1min1}
J(E_h)\leq J(E)+\frac{C_0}{4}\bigl(\alpha(E_h,E)\bigr)^2\,,
\eeq
where $C_0>0$ is the constant appearing in Theorem~\ref{th:c2min}.
By translating the sets if necessary, we may assume that  $\chi_{E_h}\to \chi_{E}$ in $L^1(\T)$. We now replace the sequence
$E_h$ with a sequence of minimizers $F_h$ of the following penalized functional
\beq\label{l1min2}
J(F)+\Lambda_1\sqrt{\bigl(\alpha(F,E)-\e_h\bigr)^2+\e_h}+\Lambda_2\bigl||F|-|E|\bigr|\,,
\eeq
where $\e_h:=\alpha(E_h,E)$, while the constants  $\Lambda_1$, $\Lambda_2$ will be chosen later.
Up to a subsequence we may assume that $\chi_{F_h}\to \chi_{F_0}$ in $L^1$, where   $F_0\subset\T$ is a minimizer of
$$
J(F)+\Lambda_1\alpha(F,E)+\Lambda_2\bigl||F|-|E|\bigr|
$$
and therefore, by translating $F_0$ and $F_h$ if necessary, also of
\beq\label{l1min3}
J(F)+\Lambda_1|E\triangle F|+\Lambda_2\bigl||F|-|E|\bigr|\,.
\eeq
Using Lemma~\ref{lm:noname} and arguing as in the proof of \eqref{infty3bis}, one can prove that if $\Lambda_1$ is sufficiently large (independently of $\Lambda_2$) the unique minimizer of \eqref{l1min3} is $E$. Thus, $F_0=E$. We now observe that the same argument used in the proof of Proposition~\ref{prop:EF} shows that if $\Lambda_2$ is sufficiently large then $|F_h|=|E|$ for all $h$.
Moreover, using Lemma~\ref{lm:v}, it  can be checked that for all $h$ the set $F_h$ is a $\Lambda$-minimizer of the area functional  for some $\Lambda>0$  independent of $h$.
Therefore, Theorem~\ref{th:white} yields that $F_h\to E$ in $C^{1,\alpha}$ for all $\alpha\in (0,\frac12)$. More precisely,
$$
\pa F_h=\{x+\psi_h(x)\nu(x):x\in \pa E\}\,,
$$
where $\psi_h\to 0$ in $C^{1, \alpha}(\pa E)$ for all $\alpha\in (0, \frac12)$. 

We show that $\psi_h\to 0$ in $W^{2, p}(\pa E)$ for all $p>1$. To this aim, observe that since $|F_h|=|E|$, each $F_h$ minimizes
the functional 
$$
J(F)+\Lambda_1\sqrt{\bigl(|F\triangle E|-\e_h\bigr)^2+\e_h}
$$ 
under the volume constraint $|F|=|E|$. 
Arguing as in Step 4 of the proof of Theorem~\ref{th:inftymin} we have that $\|H_{\pa F_h}\|_{L^\infty(\pa F_h)}$ is uniformly bounded,
hence the functions $\psi_h$ are equibounded in $W^{2,p}(\pa E)$ and the following Euler-Lagrange equation holds:
$$
H_{\pa F_h}=
\begin{cases}
\displaystyle \frac{\Lambda_1\bigl(\alpha(F_h,E)-\e_h\bigr)}{\sqrt{\bigl(\alpha(F_h,E)-\e_h\bigr)^2+\e_h}}\,{\rm sign}\,(\chi_{F_h}-\chi_E)
+\lambda_h-4\gamma v_{F_h} & \text{on $\pa F_h\setminus \pa E$,}\vspace{0.25cm}\\
\lambda-4\gamma v_E & \text{on $\pa F_h\cap \pa E$,}
\end{cases}
$$
where $\lambda_h$ and $\lambda$ are the Lagrange multipliers.
We claim that 
\beq\label{l1min5}
\e_h^{-1}\alpha(F_h, E)\to 1\,.
\eeq
 Indeed, if $|\alpha(F_h, E)-\e_h|\geq \sigma \e_h$ for some $\sigma>0$ and for infinitely many $h$, recalling \eqref{l1min1} and the fact that $F_h$ minimizes the functional \eqref{l1min2}, we have 
\begin{eqnarray*}
J(F_h)+\Lambda_1\sqrt{\sigma^2\e_h^2+\e_h} &\leq& J(E_h)+\Lambda_1\sqrt{\bigl(\alpha(E_h, E)-\e_h\bigr)^2+\e_h}=
J(E_h)+\Lambda_1\sqrt{\e_h}\\
&\leq& J(E)+\frac{C_0}{4}\bigl(\alpha(E_h,E)\bigr)^2+\Lambda_1\sqrt{\e_h}=
J(E)+\frac{C_0}{4}\e_h^2+\Lambda_1\sqrt{\e_h}\\
&\leq& J(F_h)+\frac{C_0}{4}\e_h^2+\Lambda_1\sqrt{\e_h}\,,
\end{eqnarray*}
where in the last inequality we have used the local minimality of $E$ with respect to $L^{\infty}$ perturbations proved in 
Theorem~\ref{th:inftymin}.  Since by the previous chain of inequalities we get that 
$$
\Lambda_1\sqrt{\sigma^2\e_h^2+\e_h}\leq \frac{C_0}{4}\e_h^2+\Lambda_1\sqrt{\e_h}\,,
$$
which is impossible for $h$ large, the claim is proved.  
In particular, 
$$
\biggl\|\frac{\Lambda_1\bigl(\alpha(F_h,E)-\e_h\bigr)}{\sqrt{\bigl(\alpha(F_h,E)-\e_h\bigr)^2+\e_h}}\,{\rm sign}\,(\chi_{F_h}-\chi_E)\biggr\|_{L^\infty(\pa F_h)}\to 0\,.
$$
Therefore, arguing  as in Step 4 of the proof of Theorem~\ref{th:inftymin}, we conclude that \eqref{infty6} holds thus proving
that $\psi_h\to 0$ in $W^{2, p}(\pa E)$ for all $p>1$.  We may now conclude: since $J(F_h)\leq J(E_h)$ by the minimality of $F_h$ and recalling \eqref{l1min5}, we have  that
$$
J(F_h)\leq J(E_h)\leq  J(E)+\frac{C_0}{4}\bigl(\alpha(E_h,E)\bigr)^2\leq 
 J(E)+\frac{C_0}{2}\bigl(\alpha(F_h,E)\bigr)^2
$$
for $h$ large.
This contradicts the minimality property proved in Theorem~\ref{th:c2min}.
\end{proof}
\begin{remark}\label{rm:recente1}
It is worth remarking that in the previous proof we did not use the second variation and we have in fact shown that any critical set $E$, for which the conclusion of Theorem~\ref{th:c2min} holds, satisfies also the conclusion of Theorem~\ref{th:l1min}. 
\end{remark}

We conclude this section by sketching the proof of the link between local minimizers of $J$ and of the Ohta-Kawasaki energy.
\begin{proof}[Proof of Theorem~\ref{th:OK}.]
We start by observing that the classical Modica-Mortola result (see \cite{Mo}; see also \cite{DalMaso} for the definition and properties of 
$\Gamma$-convergence) and the continuity of the non-local term with respect to the $L^1$ convergence of $u$ imply the  
$\Gamma$-convergence of 
$\mathcal E_\e$  to $\frac{16}3 \mathcal E$, where $\mathcal E$ is the functional defined in \eqref{egamma}.
The conclusion follows from the $L^1$-local minimality of $E$ proved in Theorem~\ref{th:l1min}, arguing as in 
\cite[Proposition 3.2]{CS2}.
\end{proof}
\begin{remark}
A careful inspection of the proof of \cite[Proposition 3.2]{CS2} shows that the radius $\de$ in the local minimality condition is uniform throughout the family $\{u_\e\}_{\e<\e_0}$ and depends only on the local minimality radius of the set $E$ appearing in
 Definition~\ref{def:locmin}. 
\end{remark}

\section{Application:  minimality of lamellae}\label{appl}
In this section we deal with global (and local) minimality of lamellar configurations. To this aim, for a given volume fraction parameter $m\in (-1,1)$ we denote by $u_L$ the one-strip lamellar configuration corresponding to the set $L:=\mathbb T^{N-1}\times[0,\frac{m+1}{2}]$ and by $\mathcal L_m
$ the collection of all sets which may be obtained from $L$ by translations and relabeling of coordinates.
\begin{theorem}\label{caffe}
Assume that  $L$ is the unique, up to translations and relabeling of coordinates, global minimizer of the periodic isoperimetric problem.  Then the same set is also the unique global minimizer of the non local functional \eqref{J},  provided $\gamma$ is sufficiently small.
\end{theorem}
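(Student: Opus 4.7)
The approach is by contradiction and compactness. Suppose the statement fails: then there exist $\gamma_n\to 0^+$ and global minimizers $E_n$ of $J$ (at coefficient $\gamma_n$, volume $(m+1)/2$) with $E_n\notin\mathcal L_m$. Using $L$ as a competitor yields
$$P_\T(E_n)\le P_\T(L)+\gamma_n\int_\T|\nabla v_L|^2\,dx,$$
so the perimeters are uniformly bounded; by $BV$-compactness, up to a subsequence $\chi_{E_n}\to\chi_{E_\infty}$ in $L^1$ with $|E_\infty|=(m+1)/2$. Lower semicontinuity of the perimeter together with Lemma~\ref{lm:v} (continuity of the nonlocal term along $L^1$-convergent sequences) give $P_\T(E_\infty)\le P_\T(L)$, so $E_\infty$ is isoperimetric in its volume class. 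By the uniqueness hypothesis $E_\infty\in\mathcal L_m$; after translating and relabeling coordinates, I may assume $E_\infty=L$ and $E_n\to L$ in $L^1$.

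I then upgrade this to $W^{2,p}$-convergence of the boundaries. By Theorem~\ref{th:regularity}, each $E_n$ is an $\omega_n$-minimizer of the area functional; since $\gamma_n\to 0$ and the volume-constraint Lagrange multiplier $\lambda_n$ in the Euler--Lagrange equation is uniformly bounded (using that $v_{E_n}$ is bounded via \eqref{stimauniv} and integrating $H_{\pa E_n}=\lambda_n-4\gamma_n v_{E_n}$ against suitable test vector fields), one has $\sup_n\omega_n<\infty$. Theorem~\ref{th:white} then writes $\pa E_n=\{x+\psi_n(x)\nu(x):x\in\pa L\}$ with $\psi_n\to 0$ in $C^{1,\alpha}(\pa L)$ for every $\alpha\in(0,\tfrac12)$. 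Combining the same Euler--Lagrange equation with \eqref{stimauniv} and standard elliptic regularity (argued as in Step~4 of the proof of Theorem~\ref{th:inftymin}), this is upgraded to $\psi_n\to 0$ in $W^{2,p}(\pa L)$ for every $p<\infty$.

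The final ingredient is strict stability of $L$ at small $\gamma$. Since $\pa L$ is totally geodesic, $|B_{\pa L}|^2\equiv 0$, and the $G$-integral in \eqref{pa2J} is non-negative in view of \eqref{nonloc}. For $\vphi\in T^\perp(\pa L)\setminus\{0\}$, the constraints $\int_{\pa L}\vphi\,d\H=0$ and $\int_{\pa L}\vphi\nu\,d\H=0$ force $\vphi$ to have zero mean on each of the two flat faces of $\pa L$, so the Poincar\'e inequality on $\mathbb T^{N-1}$ yields
$$\int_{\pa L}|D_\tau\vphi|^2\,d\H\ge c_0\|\vphi\|_{L^2(\pa L)}^2$$
for some $c_0>0$ independent of $\vphi$; since the last term in \eqref{pa2J} is controlled in modulus by $4\gamma\|\pa_\nu v_L\|_\infty\|\vphi\|_{L^2}^2$, the quadratic form $\pa^2J(L)$ is strictly positive on $T^\perp(\pa L)\setminus\{0\}$ for all sufficiently small $\gamma$. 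Theorem~\ref{th:c2min} applied to $L$ then provides $\delta,C_0>0$ such that $J(F)\ge J(L)+C_0\alpha(L,F)^2$ for every $F$ with $|F|=|L|$ and $\|\psi\|_{W^{2,p}(\pa L)}\le\delta$. Applied to $F=E_n$ for $n$ large and combined with the minimality $J(E_n)\le J(L)$, this forces $\alpha(L,E_n)=0$, i.e.\ $E_n\in\mathcal L_m$, a contradiction.

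The delicate point, and the main obstacle, is the uniformity of the constants $\delta,C_0$ in Theorem~\ref{th:c2min}, equivalently of the coercivity constant $m_0$ of Lemma~\ref{coerc}, as $\gamma_n\to 0$. By inspection of those proofs all relevant quantities depend continuously on $\gamma$ through $v_E$, which is controlled uniformly via \eqref{stimauniv}, so uniformity persists on a neighborhood of $\gamma=0$; but this verification, although not involving any new idea, is the one step where care is required.
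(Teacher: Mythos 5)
Your proposal follows a genuinely different route from the paper's, and while the overall plan is plausible, it contains a gap that you flag but do not close, and moreover slightly misdiagnose. The crux is the uniformity in $\gamma_n$ of the constants produced by Theorem~\ref{th:c2min}. You assert this is ``equivalent'' to the uniformity of the coercivity constant $m_0$ from Lemma~\ref{coerc}, but that is not accurate: $m_0(\gamma)$ controls only the prefactor $C_0$ in Theorem~\ref{th:c2min}, not the radius $\delta$ of the admissible $W^{2,p}$-neighborhood. That radius comes from two separate compactness/contradiction arguments (the proof of \eqref{c2unif} in Step~1, and the subsequent choice of $\delta_2$ in Step~2), which yield constants with no explicit $\gamma$-dependence; making them uniform would require rerunning those contradiction arguments with $\gamma$ allowed to vary along the sequence, which you do not do. A similar issue hides in your intermediate regularity step: the uniform $\omega$-minimality needed to invoke Theorem~\ref{th:white} comes from the penalization parameter of Proposition~\ref{prop:EF}, itself obtained by contradiction, so its uniformity in $n$ also requires an argument.

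The paper's proof avoids all of this with a short monotonicity-in-$\gamma$ observation that eliminates the need to track the $\gamma$-dependence of any constant. After $E_h\to L$ in $L^1$, it fixes one $h_0$ such that $\pa^2 J_{h_0}(L)>0$ on $T^\perp(\pa L)$ and takes the radius $\delta$ from Theorem~\ref{th:l1min} for that single functional $J_{h_0}$. Then the same $\delta$ is shown to work for every $h\geq h_0$: if $J_h(F)\leq J_h(L)$ for some $F$ with $0<\alpha(F,L)<\delta$, the unique minimality of $L$ forces $P_\T(F)>P_\T(L)$, hence $\int_\T|\nabla v_L|^2-\int_\T|\nabla v_F|^2>0$, and replacing $\gamma_h$ by the larger $\gamma_{h_0}$ gives $J_{h_0}(F)<J_{h_0}(L)$, a contradiction. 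This transfers the $L^1$ minimality radius across $\gamma$'s for free, so neither the $W^{2,p}$ upgrade nor any uniformity check is needed. Your route could be completed by reproving Theorem~\ref{th:c2min} (and the regularity inputs) with constants uniform for $\gamma$ in a neighborhood of $0$, which is plausible using \eqref{stimauniv}, but as written the argument has a genuine hole precisely where you say care is required.
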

\begin{proof}
We argue by contradiction assuming that there exist a sequence $\gamma_h\to0$ and a sequence of global minimizers $E_h$ of
$$
\min\{J_h(E):\,E\subset\T,\,|E|=|L|\}\;,
$$
where $J_h(E):=\P(E)+\gamma_h\int_{\T}|\nabla v_E|^2\,dx$, 
such that $E_h\not\in\mathcal L_m
$ for all $h$. Up to a subsequence we have $E_h\to E$ in $L^1$ and by the (easily proved) $\Gamma$-convergence of $J_h$ to the perimeter functional as $\gamma_h\to0$ we have that $E$ is a global minimizer of the periodic isoperimetric problem, thus by assumption $E\in \mathcal L_m
$. Without loss of generality we may assume $E=L$. 

To begin with, the second variation of $J_h$ at $L$ in \eqref{pa2J} reduces to
\[\begin{split}
\pa^2J_h(L)[\varphi]&= \int_{\pa L}|D_\tau\varphi|^2\, d\H+8\gamma_h \int_{\pa L}  \int_{\pa L}G(x,y)\varphi(x)\varphi(y)d\H(x)\,d\H(y) \\
& \quad +4\gamma_h\int_{\pa L}\partial_{\nu}v\varphi^2\,d\H\\
&\geq \int_{\pa L}|D_\tau\varphi|^2\, d\H-4\gamma_h  \|\nabla v_L\|_{L^{\infty}}\int_{\pa L}\vphi^2\,d\H\,.
\end{split}\]
Note that $\varphi\in T^\perp(\pa L)$ if and only if $\varphi\in H^1(\pa L)$ with zero average on each connected component of $\pa L$. Thus, using Poincar\'e inequality on each connected component of $\pa L$ we have that
$$
\pa^2J_h(L)[\varphi]\geq\frac12 \int_{\pa L}|D_\tau\varphi|^2\, d\H\,,
$$
provided that $h$ is large enough, say $h\geq h_0$. By Theorem~\ref{th:l1min}, there exists $\delta>0$ such that 
\begin{equation}\label{contra}
J_{h_0}(F)>J_{h_0}(L), \qquad \text{for all $|F|=|L|$, $0<\alpha(F,L)<\delta$\,.}
\end{equation}
 We claim that the same holds for all $h>h_0$. Indeed, if for some $h>h_0$ and for some set $F$ with $|F|=|L|$ and $0<\alpha(F,L)<\delta$ the above inequality does not hold, then we have 
 \beq\label{contra2}
 J_{h}(F)\leq J_{h}(L)\,.
 \eeq 
 In turn, since $L$ is the unique global minimizer of the perimeter and thus $\P(F)>\P(L)$, we deduce
 $$
\int_{\T}|\nabla v_L|^2\,dx-\int_{\T}|\nabla v_F|^2\,dx>0\,.
 $$
 But then, by \eqref{contra2}, we get
 $$
 \P(F)-\P(L)\leq\gamma_h\biggl(\int_{\T}|\nabla v_L|^2\,dx-\int_{\T}|\nabla v_F|^2\,dx\biggr)<\gamma_{h_0}\biggl(\int_{\T}|\nabla v_L|^2\,dx-\int_{\T}|\nabla v_F|^2\,dx\biggr)\,,
 $$
 which contradicts \eqref{contra}. Thus, we have proved that
$$
J_{h}(F)>J_{h}(L), \qquad \text{for all $|F|=|L|$, $0<\alpha(F,L)<\delta$}
$$
for all $h\geq h_0$.
As $E_h\to L$ in $L^1$, for $h$ large enough we also have $J_h(E_h)>J_h(L)$, which contradicts the minimality of $E_h$.  
\end{proof}
As an immediate consequence of the above theorem we recover the following result, first proved in \cite{ST}.
\begin{corollary}\label{cor:ST}
Let $N=2$. Fix any $m$ such that $|m| <1-\frac{2}{\pi}$ . Then for small $\gamma > 0$, any solution of 
$$
\min\biggl\{P_{\mathbb{T}^2}(E)+\gamma\int_{\mathbb{T}^2}|\nabla v_E|^2\,dx:\, E\subset\mathbb{T}^2,\,|E|=\frac{m+1}2\biggr\}
$$
belongs to $\mathcal L_m
$, that is, it is  lamellar.
\end{corollary}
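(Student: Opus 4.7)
The plan is to deduce Corollary~\ref{cor:ST} as an immediate consequence of Theorem~\ref{caffe}, whose hypothesis requires only that the one-strip configuration $L$ be the unique (up to translations and relabelings of coordinates) global minimizer of the purely geometric periodic isoperimetric problem
\[
\min\bigl\{P_{\mathbb{T}^2}(E):\,E\subset\mathbb{T}^2,\,|E|=\tfrac{m+1}{2}\bigr\}.
\]
So the entire argument reduces to verifying this classical isoperimetric statement on $\mathbb{T}^2$ under the quantitative threshold $|m|<1-\tfrac{2}{\pi}$.

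First, I would invoke the well-known characterization of the periodic isoperimetric problem in two dimensions: the candidate minimizers of prescribed volume $v=\tfrac{m+1}{2}$ are, up to translations and exchange of the two coordinate axes, either a round disk (for $v$ small) or a horizontal strip (for $v$ in a neighborhood of $\tfrac12$), with the strip being uniquely optimal precisely when its perimeter $2$ is strictly less than the perimeter $2\sqrt{\pi v}$ of the disk of the same area. The inequality $2<2\sqrt{\pi v}$ translates into $v>1/\pi$, equivalently $m>\tfrac{2}{\pi}-1$, and by the symmetry $m\mapsto -m$ (exchanging $E$ with $\mathbb T^2\setminus E$) into $|m|<1-\tfrac{2}{\pi}$. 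Uniqueness up to translations and coordinate relabelings in this regime is the content of the classical two-dimensional periodic isoperimetric theorem; I would cite it directly (see e.g.\ the references already invoked in the introduction, notably \cite{H} for the analogous result in $\mathbb T^3$ and the two-dimensional statement discussed in \cite{ST}).

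Second, with this isoperimetric fact in hand, the hypothesis of Theorem~\ref{caffe} is satisfied with $N=2$, so the theorem furnishes $\gamma_0>0$ such that for every $\gamma\in(0,\gamma_0)$ the strip $L$ is the unique global minimizer of $J$ among sets of volume $\tfrac{m+1}{2}$, up to elements of $\mathcal L_m$. This is exactly the conclusion of the corollary.

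The only step with any genuine content is the isoperimetric statement in $\mathbb T^2$ itself; everything else is bookkeeping. Since the corollary is stated as a known result first proved in \cite{ST}, the natural presentation is to observe that Theorem~\ref{caffe} applies verbatim once one knows the two-dimensional periodic isoperimetric inequality in the range $|m|<1-\tfrac{2}{\pi}$, and then conclude. No new estimates, no second variation argument beyond what is already encoded in Theorem~\ref{caffe}, and no additional smallness of $\gamma$ beyond that produced by Theorem~\ref{caffe} is required.
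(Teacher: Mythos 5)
Your proposal is correct and follows exactly the paper's argument: invoke Theorem~\ref{caffe}, after noting that for $|m|<1-\frac{2}{\pi}$ the strips in $\mathcal{L}_m$ are the unique solutions of the periodic isoperimetric problem on $\mathbb{T}^2$ (the paper cites \cite{HHM} for this classical fact, whereas you indicate \cite{H} and \cite{ST}, but the content is the same). Your additional remarks explaining the perimeter comparison between strip and disk that produces the threshold $|m|=1-\frac{2}{\pi}$ are expository only and do not change the logical structure.
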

\begin{proof}
The proof follows from Theorem~\ref{caffe} and from the fact that if  $|m| <1-\frac{2}{\pi}$, then the lamellar sets of $\mathcal{L}_m$ are the unique global minimizers of the periodic isoperimetric problem in $\mathbb{T}^2$, as proved in \cite{HHM} (see also \cite{CS2}).
\end{proof}
The corollary above holds only for $N=2$, where the minimality range of lamellar sets is completely determined. 
For $N=3$, to the best of our knowledge the global (with uniqueness) minimality of
$\mathcal{L}_m$ is known only in the case $m=0$ (see \cite{H}). In the following result we show the result still holds for $m$ sufficiently close to $0$.
\begin{theorem}\label{erice}\
There exists $\e>0$ such that if $m\in (-\e, \e)$ the lamellar sets in $\mathcal{L}_m$ are the unique solutions  to the corresponding periodic  isoperimetric problem in $\mathbb{T}^3$.
\end{theorem}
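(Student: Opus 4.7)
The plan is to argue by contradiction and exploit the quantitative isoperimetric inequality for the lamellar set $L_{m}:=\mathbb{T}^2\times[0,(m+1)/2]$ supplied by Corollary~\ref{cor:area}. Suppose there exist $m_h\to 0$ and periodic isoperimetric minimizers $E_h\subset\mathbb{T}^3$ with $|E_h|=(m_h+1)/2$ but $E_h\notin\mathcal{L}_{m_h}$. The minimality $P_{\T}(E_h)\leq P_{\T}(L_{m_h})$ gives a uniform perimeter bound; $BV$-compactness (after a translation) yields $\chi_{E_h}\to\chi_{E}$ in $L^1(\mathbb{T}^3)$ with $|E|=1/2$. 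A standard modification of competitors to match the volume, combined with lower semicontinuity of the perimeter, shows that $E$ is a global perimeter minimizer at volume $1/2$, hence by Hadwiger's theorem \cite{H} $E\in\mathcal{L}_0$, and we may assume $E=L_0$.

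Each $E_h$ is an $\omega$-minimizer of the area functional (the volume constraint contributes only a bounded Lagrange multiplier, as in Proposition~\ref{prop:EF}), so Theorem~\ref{th:white} gives $\partial E_h=\{x+\psi_h(x)\nu^{L_0}(x):x\in\partial L_0\}$ with $\psi_h\to 0$ in $C^{1,\alpha}(\partial L_0)$ for every $\alpha<1/2$. The constant-mean-curvature equation $H_{\partial E_h}=\lambda_h$ holds with $\{\lambda_h\}$ bounded, and the elliptic bootstrap used in Step~4 of the proof of Theorem~\ref{th:inftymin} upgrades the convergence to $\psi_h\to 0$ in $W^{2,p}(\partial L_0)$ for every $p>1$.

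Next, verify the stability hypothesis of Corollary~\ref{cor:area} at $L_m$: since $|B_{\partial L_m}|^2\equiv 0$, and $\partial L_m$ consists of two flat $\mathbb{T}^2$-sheets of unit area, the subspace $T(\partial L_m)\subset\Ht(\partial L_m)$ is generated by the function taking values $\pm 1$ on the two sheets, so $T^\perp(\partial L_m)$ consists exactly of those $\varphi\in\Ht(\partial L_m)$ with zero average on each sheet. The Poincar\'e inequality on $\mathbb{T}^2$ then gives
$$
\int_{\partial L_m}|D_\tau\varphi|^2\,d\H\geq c_0\|\varphi\|_{L^2(\partial L_m)}^2\qquad\text{for all }\varphi\in T^\perp(\partial L_m),
$$
with $c_0>0$ \emph{independent of $m$}, because the sheets are all isometric to the same flat $\mathbb{T}^2$.

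The main obstacle will be turning this uniform coercivity into uniform constants in the quantitative estimate. Applied at each $L_{m_h}$, Corollary~\ref{cor:area} yields $\delta_h,C_h>0$ such that $P_{\T}(F)\geq P_{\T}(L_{m_h})+C_h\,\alpha(F,L_{m_h})^2$ whenever $|F|=|L_{m_h}|$ and $\alpha(F,L_{m_h})<\delta_h$, and one must show $\delta_h\geq\delta_\ast>0$, $C_h\geq C_\ast>0$. I would handle this by re-running the nested contradiction arguments of Theorems~\ref{th:c2min}, \ref{th:inftymin}, and \ref{th:l1min} simultaneously along $\{L_{m_h}\}$: every ingredient used there (the coercivity constant $m_0$ of Lemma~\ref{coerc}, the $C^3$-norms of $L_{m}$, the Sobolev and trace constants on $\partial L_m$, and the penalization thresholds $\lambda,\Lambda_1,\Lambda_2$) varies continuously in $m$ at $m=0$, so any hypothetical failure of the estimate along $L_{m_h}$ would, via the $W^{2,p}$-convergence obtained above, descend to a failure at $L_0$, contradicting the strict stability just proved. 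Once the uniform estimate is in hand, since $\alpha(E_h,L_{m_h})\leq|E_h\triangle L_{m_h}|\to 0$ and $|E_h|=|L_{m_h}|$, for $h$ large one has $P_{\T}(E_h)\geq P_{\T}(L_{m_h})+C_\ast\,\alpha(E_h,L_{m_h})^2$; minimality forces $\alpha(E_h,L_{m_h})=0$, so $E_h\in\mathcal{L}_{m_h}$, contradicting the standing assumption.
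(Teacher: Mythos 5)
Your outline is correct up through the $W^{2,p}$-convergence $\psi_h\to 0$ (that part coincides with the paper's), and your verification that the stability hypothesis of Corollary~\ref{cor:area} holds at every $L_m$ with a Poincar\'e constant independent of $m$ is accurate — indeed $T(\pa L_m)$ is spanned by $\nu_3=\pm 1$ on the two sheets, so $T^\perp(\pa L_m)$ consists exactly of functions with vanishing mean on each flat $\mathbb{T}^2$-copy, and $|B_{\pa L_m}|\equiv 0$. However, after that point your proof diverges substantially from the paper's and the crucial step is only sketched.

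The paper avoids the quantitative inequality entirely. Once $\pa E_h$ is written as a graph $\psi_h$ over $\pa L_0$, it subtracts the constant $m_h/2$ from $\psi_h$ on the top sheet. This is a rigid vertical shift of one boundary component: it leaves the tangential gradient, hence the perimeter, unchanged, and it decreases the volume by exactly $m_h/2$, producing a competitor $E'_h$ with $|E'_h|=1/2$ and $P(E'_h)=P(E_h)$. If $E_h\notin\mathcal L_{m_h}$ then $\psi_h$ is non-constant on some sheet, hence so is the shifted profile, so $E'_h\notin\mathcal L_0$ and Hadwiger's strict uniqueness gives $P(E'_h)>P(L_0)$. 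Combined with $P(E_h)\le P(L_{m_h})=P(L_0)$ from minimality, this is an immediate contradiction. The whole argument is a few lines once the graphicality is in hand.

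Your route, by contrast, hinges on the claim that the constants $\delta_h,C_h$ in Corollary~\ref{cor:area} applied at $L_{m_h}$ can be taken uniformly positive. You flag this as the main obstacle and propose to ``re-run the nested contradiction arguments simultaneously along $\{L_{m_h}\}$.'' That is precisely the part that is not carried out, and it is not a small matter: the constants in Theorems~\ref{th:c2min}, \ref{th:inftymin}, and \ref{th:l1min} are produced by compactness/contradiction arguments tied to a single fixed reference set $E$, and to make them uniform over a varying family of base sets one would have to rewrite those three theorems (and Lemma~\ref{lm:italiano}, the penalization thresholds, the $\omega$-minimality constants, etc.) allowing the reference set itself to move along the sequence. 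Nothing suggests this would fail for the flat lamellae, but it is a substantial rewrite, not an observation, and the paper deliberately sidesteps it with the sheet-shift trick. If you did carry the uniformity through, you would obtain something strictly stronger than the theorem — a quantitative isoperimetric inequality with constants uniform for $|m|$ small — so the extra labor would not be wasted; but as a proof of the stated uniqueness result your argument has a genuine gap where the paper's has an elementary shortcut.
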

\begin{proof}
We argue by contradiction assuming that there exist $m_h\to 0$ and a sequence $E_h$ of global minimizers of
\beq\label{erice1}
\min\Bigl\{P_{\mathbb{T}^3}(E): E\subset \mathbb{T}^3\,, |E|=\frac{m_h+1}{2}\Bigr\}
\eeq
such that $E_h\not\in\mathcal{L}_{m_h}$. 
As before, we may assume that  the sequence $E_h$ converges in $L^1$ to a global minimizer of  \eqref{erice1} with $m_h$ replaced by $0$. By the result of \cite{H} (see also \cite[Theorem 4.3]{CS2}) we may assume that $E_h\to L=\mathbb{T}^2\times[0,\frac12]$ in $L^1$ (up to translation and relabeling of coordinates). 
Moreover, arguing as in Proposition~\ref{prop:EF}, one can show that there exists 
$\lambda>0$ independent of $h$ such that 
each $E_h$ is also a minimizer of the unconstrained penalized problem
$$
\min\Bigl\{P_{\mathbb{T}^3}(E)+\lambda\Bigl||E|-\frac{m_h+1}{2}\Bigr|: E\subset \mathbb{T}^3\Bigr\}\,.
$$
Thus, in particular, all sets $E_h$ are $\omega$-minimizers of the perimeter with the same $\omega=\frac{3\lambda}{4\pi}$. Therefore, by Theorem~\ref{th:white} we deduce that for $h$ large $\pa E_h=\{x+\psi_h(x)e_3:\, x\in \pa L\}$, with $\psi_h\to 0$ in $C^{1}(\pa L)$. By adding the constant $-\frac{m_h}2$ to $\psi_h$ only on the upper part of $\pa L$, we obtain the boundary of a new set $E'_h$ with the same perimeter as $E_h$ and volume $\frac12$. But then, 
$$P_{\mathbb{T}^3}(E_h)=P_{\mathbb{T}^3}(E'_h)> P_{\mathbb{T}^3}(L)\,,$$
by the minimality of $L$.  On the other hand, using the fact that all strips have the same perimeter, $P_{\mathbb{T}^3}(E_h)\leq P_{\mathbb{T}^3}(L)$ by the minimality of $E_h$. This contradiction  concludes the proof.
\end{proof}
\begin{remark}\label{emi}
Note that the argument used in the proof of the previous theorem shows that in any dimension the values of $m$ such that the corresponding strip is the unique minimizer of the perimeter form an open set.
\end{remark}
As before we have the following corollary.
\begin{corollary}\label{cor:nostro}
Let $N=3$. There exists $m_0>0$ and $\gamma_0$ such that for  $|m| <m_0$ such that  any solution of 
$$
\min\biggl\{P_{\mathbb{T}^3}(E)+\gamma\int_{\mathbb{T}^3}|\nabla v_E|^2\,dx:\, E\subset\mathbb{T}^3,\,|E|=\frac{m+1}2\biggr\}
$$
belongs to $\mathcal L_m
$,  provided that $\gamma\leq \gamma_0$.
\end{corollary}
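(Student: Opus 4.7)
The plan is to combine Theorem~\ref{erice} with a uniform-in-$m$ version of Theorem~\ref{caffe}. Let $m_0>0$ be the value provided by Theorem~\ref{erice}, so that for every $m\in(-m_0,m_0)$ the lamellar set $L_m$ is, up to translations and relabeling of coordinates, the unique global minimizer of the periodic perimeter in $\mathbb{T}^3$ under the volume constraint $(m+1)/2$. For each fixed such $m$, Theorem~\ref{caffe} already produces a threshold $\gamma_0(m)>0$ below which $\mathcal L_m$ consists of the unique global minimizers of $J_\gamma$; the substance of the corollary is the positivity of $\inf_{|m|<m_0}\gamma_0(m)$, possibly after shrinking $m_0$.

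I argue by contradiction. If no uniform $\gamma_0$ exists, one finds sequences $m_h\in(-m_0,m_0)$, $\gamma_h\to 0$, and non-lamellar global minimizers $E_h$ of $J_{\gamma_h}$ with $|E_h|=(m_h+1)/2$. From the comparison $J_{\gamma_h}(E_h)\leq J_{\gamma_h}(L_{m_h})$ together with \eqref{stimauniv} one has $P_\T(E_h)\leq 2+O(\gamma_h)$, so after subsequences and translations $m_h\to m_\infty\in[-m_0,m_0]$ and $E_h\to E_\infty$ in $L^1$. Lower semicontinuity of the perimeter and Lemma~\ref{lm:v} identify $E_\infty$ as a perimeter minimizer of volume $(m_\infty+1)/2$, hence $E_\infty\in\mathcal L_{m_\infty}$, and without loss of generality $E_\infty=L_{m_\infty}$. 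Proposition~\ref{prop:EF} shows in addition that the $E_h$ are $\omega$-minimizers for a common $\omega$, so Theorem~\ref{th:white} yields $\pa E_h=\{x+\psi_h(x)\nu(x):x\in\pa L_{m_\infty}\}$ with $\psi_h\to 0$ in $C^{1,\alpha}$; in particular $\alpha(E_h,L_{m_h})\to 0$.

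The pivotal ingredient is a quantitative isoperimetric inequality at $L_{m_h}$ with constants \emph{independent of $h$}: there exist $\gamma^*,\delta^*,C^*>0$ such that for every $h$, every $\gamma\leq\gamma^*$, and every $F\subset\T$ with $|F|=|L_{m_h}|$ and $\alpha(F,L_{m_h})<\delta^*$,
$$
J_\gamma(F)\geq J_\gamma(L_{m_h})+C^*\alpha(F,L_{m_h})^2.
$$
This is the $m$-uniform version of Theorem~\ref{th:l1min} applied to the lamellar family. The coercivity of $\pa^2 J_\gamma(L_m)$ on $T^\perp(\pa L_m)$ given by Lemma~\ref{coerc} is uniform in $m$, because $\pa L_m$ is the disjoint union of two isometric copies of $\mathbb{T}^2$, so the Poincar\'e constant restricted to $T^\perp$ is universal, while $\|\nabla v_{L_m}\|_\infty$ is uniformly bounded by \eqref{stimauniv}; the remaining constants along the chain Theorems~\ref{th:c2min}--\ref{th:l1min} ultimately depend only on geometric data of the lamellae that are uniform for $m$ in a compact interval.

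Granted the uniform estimate, fix $h_0$ with $\gamma_{h_0}\leq\gamma^*$ and large enough that $\alpha(E_h,L_{m_h})<\delta^*$ for every $h\geq h_0$. The uniqueness of the perimeter minimizer together with $E_h\notin\mathcal L_{m_h}$ gives $P_\T(E_h)>2$, and then the minimality $J_{\gamma_h}(E_h)\leq J_{\gamma_h}(L_{m_h})$ forces $I:=\int_\T|\nabla v_{L_{m_h}}|^2\,dx-\int_\T|\nabla v_{E_h}|^2\,dx>0$. Applying the uniform inequality with $\gamma=\gamma_{h_0}$ and subtracting the minimality, exactly as in the $\gamma$-monotonicity step of the proof of Theorem~\ref{caffe}, yields
$$
(\gamma_h-\gamma_{h_0})\,I\ \geq\ C^*\alpha(E_h,L_{m_h})^2.
$$
For $h\geq h_0$ the left-hand side is strictly negative while the right-hand side is strictly positive (because $E_h\notin\mathcal L_{m_h}$ forces $\alpha(E_h,L_{m_h})>0$), a contradiction. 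The main obstacle is justifying the uniformity of the constants in Theorem~\ref{th:l1min} across the lamellar family: the flatness of the $L_m$'s and their translation symmetry in two of the three periodic directions make this accessible by a careful inspection of the proofs in Section~\ref{sec:locmin}, where all the relevant bounds (second variation coercivity, regularity constants, penalization thresholds) can be tracked as depending only on quantities that are uniformly bounded in $m$.
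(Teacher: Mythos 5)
Your proposal is essentially the same argument the paper intends: the paper's proof is the single line ``the result follows immediately from Theorem~\ref{erice} arguing as in the proof of Theorem~\ref{caffe},'' and you have correctly unpacked what that must mean — a compactness argument in which \emph{both} $m_h\to m_\infty$ and $\gamma_h\to 0$, followed by the $\gamma$-monotonicity trick from Theorem~\ref{caffe}, now applied around the moving lamellae $L_{m_h}$. You also correctly identify (and the paper leaves implicit) the point where the one-line proof requires care: the constants $\delta,C$ produced by Theorem~\ref{th:l1min} at $L_{m_h}$ must be uniform in $h$, which holds because the coercivity constant $m_0$ of $\pa^2J$ on $T^\perp(\pa L_m)$, the curvature bounds, the Poincar\'e constant on the two flat components, and the bound $\|\nabla v_{L_m}\|_\infty$ from \eqref{stimauniv} are all uniform for $m$ in a compact subinterval of the range given by Theorem~\ref{erice}. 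One small remark: for the final contradiction it would suffice to have the quantitative inequality at the single value $\gamma^*$ rather than for all $\gamma\le\gamma^*$, since the $\gamma$-monotonicity step already propagates strict minimality downward; your slightly stronger formulation is not needed, but does no harm.
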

\begin{proof}
The result follows immediately from Theorem~\ref{erice} arguing as in the proof of Theorem~\ref{caffe}. 
\end{proof}

We now conclude this section with a result concerning the local minimality of lamellar configurations with multiple strips.
To this aim, given $m\in (-1,1)$ and an integer $k>1$, we set
 $L_k:=\mathbb T^{N-1}\times\cup_{i=1}^k[\frac{i-1}k, \frac{i-1}k+\frac{m+1}{2k}]$ and denote by $\mathcal L_{m,k}$ the collection of all sets which may be obtained from $L_k$ by translations and relabeling of coordinates.
\begin{proposition}\label{digest}
Fix $m\in (-1,1)$ and $\gamma>0$. Then there exists an integer $k_0$ such that for $k\geq k_0$
all sets in  $\mathcal L_{m,k}$ are isolated local minimizers of \eqref{J}, according to Definition~\ref{def:locmin}.
\end{proposition}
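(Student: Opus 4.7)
The plan is to apply Theorem~\ref{th:l1min}: since each $L_k$ is smooth, it suffices to show that (a) $L_k$ is a regular critical set of $J$, and (b) $\pa^2 J(L_k)[\varphi]>0$ for every $\varphi\in T^\perp(\pa L_k)\setminus\{0\}$ once $k$ is large enough. For (a), the boundary $\pa L_k$ is a disjoint union of $2k$ flat copies of $\mathbb{T}^{N-1}$, so $H_{\pa L_k}\equiv 0$ and $B_{\pa L_k}\equiv 0$. The potential $v_{L_k}$ depends only on $x_N$, is $(1/k)$-periodic, and is invariant under reflection across the midpoint of each strip and each gap; these symmetries force $v_{L_k}$ to take the same constant value on every component of $\pa L_k$, so \eqref{ELJ} holds.

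For (b), an elementary 1D ODE computation on one period gives $\pa_\nu v_{L_k}\equiv -(1-m^2)/(4k)$ on $\pa L_k$. Substituting this and $B\equiv 0$ into \eqref{pa2J} yields
$$\pa^2 J(L_k)[\varphi]=\int_{\pa L_k}|D_\tau\varphi|^2\,d\H+8\gamma\|\mu_\varphi\|_{H^{-1}(\T)}^2-\frac{\gamma(1-m^2)}{k}\|\varphi\|_{L^2(\pa L_k)}^2,$$
with $\mu_\varphi:=\varphi\H\lfloor\pa L_k$. I Fourier-decompose $\varphi=\sum_{\xi'\in\mathbb{Z}^{N-1}}\varphi_{\xi'}$ in the transverse variable $x'\in\mathbb{T}^{N-1}$; by $x'$-translation invariance the quadratic form diagonalizes in $\xi'$. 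On any nonzero-$\xi'$ mode, $\|D_\tau\varphi_{\xi'}\|_2^2\geq 4\pi^2\|\varphi_{\xi'}\|_2^2$, so together with nonnegativity of the nonlocal term this gives strict positivity as soon as $k>\gamma(1-m^2)/(4\pi^2)$.

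The main obstacle is the zero-$\xi'$ sector, in which $\varphi$ is constant on each component $\Sigma_i$ with value $c_i$, and $\varphi\in T^\perp(\pa L_k)$ becomes $\sum_{i\,\text{odd}}c_i=\sum_{i\,\text{even}}c_i=0$. Since $\|\mu_\varphi\|_{H^{-1}(\T)}^2=\|\tilde\mu\|_{H^{-1}(\mathbb{T})}^2$ with $\tilde\mu=\sum_ic_i\delta_{a_i}$ on the $1$-torus, strict positivity here reduces to
$$\|\tilde\mu\|_{H^{-1}(\mathbb{T})}^2>\frac{1-m^2}{8k}\sum_ic_i^2. \qquad(\ast)$$
I would establish $(\ast)$ by diagonalizing the block-circulant Gram matrix $M_{ij}=G_1(a_i-a_j)$ (with $G_1$ the Green's function on the $1$-torus) in the discrete Fourier basis on $\mathbb{Z}/k\mathbb{Z}$ applied to the splitting $(u_i,w_i)=(c_{2i-1},c_{2i})$. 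Poisson summation applied to $G_1(s)=\sum_{l\neq 0}e^{2\pi ils}/(2\pi l)^2$ gives the eigenvalues of the $n$-th $2\times 2$ block as $\hat a(n)\pm|\hat b(n)|$, with $\hat a(n)=\csc^2(\pi n/k)/(4k)$ and $|\hat b(n)|=|S(n/k,\phi)|/(4\pi^2k)$, where $S(x,\phi):=\sum_{s\in\mathbb{Z}}e^{-is\phi}/(x+s)^2$ and $\phi:=\pi(m+1)\in(0,2\pi)$.

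Inequality $(\ast)$ is thus equivalent to the continuum bound $\pi^2\csc^2(\pi x)-|S(x,\phi)|>\pi^2(1-m^2)/2$ at every $x=n/k$, $n\in\{1,\dots,k-1\}$. A Taylor expansion at $x\to 0^+$, using the Fourier series of the Bernoulli polynomial $B_2$ to evaluate $\sum_{s\neq 0}e^{-is\phi}/s^2=\phi^2/2-\pi\phi+\pi^2/3$, gives
$$\pi^2\csc^2(\pi x)-|S(x,\phi)|=\frac{\pi^2(1-m^2)}{2}+c_m x^2+O(x^4),$$
with a constant $c_m=c_m(\phi)$ strictly positive for $m\in(-1,1)$; for $x$ bounded away from $0$ and $1$, the symmetry $x\mapsto 1-x$ together with an analogous expansion around any $x_0\in(0,1)$ produces a positive uniform gap. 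Hence $(\ast)$ holds for every $n$ once $k\geq k_0$, and combining with the nonzero-$\xi'$ estimate gives $\pa^2 J(L_k)>0$ on $T^\perp(\pa L_k)\setminus\{0\}$; Theorem~\ref{th:l1min} then delivers the isolated $L^1$-local minimality claimed. The most delicate point is this uniform-in-$n$ gap control, since the threshold $(1-m^2)/(8k)$ is exactly saturated in the long-wavelength limit $n/k\to 0$ and the margin above it for the worst modes is only $O(1/k^3)$.
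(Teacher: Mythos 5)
Your route is genuinely different from the paper's, and considerably more ambitious. The paper's proof is short: it invokes an unpublished result of Morini--Sternberg (cited as \cite{MS}) asserting that, for these lamellar critical sets, it suffices to verify positivity of $\pa^2 J(L_k)$ on functions with zero average on \emph{each connected component} of $\pa L_k$; the Green's function term is then simply discarded (it is nonnegative), and the Poincar\'e inequality on each flat component, together with $\|\nabla v_{L_k}\|_\infty = C/k$, closes the argument for $k$ large. That reduction removes precisely the ``zero-mode'' sector that you spend most of your proof on, namely $\varphi$ constant on each component subject to $\sum_{i\,\mathrm{odd}}c_i=\sum_{i\,\mathrm{even}}c_i=0$. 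You instead attack positivity on all of $T^\perp(\pa L_k)$ directly, which forces you to estimate the discrete $H^{-1}$ form against the negative $\pa_\nu v$-term on those modes. Your preparatory computations are correct: $\pa_\nu v_{L_k}\equiv -(1-m^2)/(4k)$, the block-circulant Fourier diagonalization and the Poisson summation giving $\hat a(n)=\csc^2(\pi n/k)/(4k)$ and $|\hat b(n)|=|S(n/k,\phi)|/(4\pi^2 k)$ are right, and the reduction of $(\ast)$ to the continuum inequality $\pi^2\csc^2(\pi x)-|S(x,\phi)|>\pi^2(1-m^2)/2$ at the lattice points is correct. Even the delicate cancellation of the $O(x)$ term in $|S|$ (because the linear coefficient of $S$ is purely imaginary) and the evaluation $c_m = 2\pi^4 t^2(1-t)^2 = \pi^4(1-m^2)^2/8 > 0$ with $t=(m+1)/2$ check out.

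However, there is a genuine gap in the final step. The Taylor expansion establishes the strict inequality only in a neighbourhood of $x=0$ (and $x=1$ by symmetry). For $x$ in a fixed compact subinterval of $(0,1)$ you merely assert ``an analogous expansion around any $x_0\in(0,1)$ produces a positive uniform gap''; that is not an argument. What is actually required is a proof that the function $g_\phi(x):=\pi^2\csc^2(\pi x)-|S(x,\phi)|-\pi^2(1-m^2)/2$ is strictly positive on all of $(0,1)$, uniformly in the sense that $\inf_{[\eta,1-\eta]}g_\phi>0$ for each $\eta>0$; this is a nontrivial global estimate for a Lerch-type special function, and it is not supplied. (It is precisely this delicacy that the paper's reliance on the \cite{MS} reduction is designed to sidestep.) Until this global bound is proven --- by a monotonicity/convexity argument, an explicit closed form for $S(x,\phi)$, or a verified numerical certificate --- your proof is incomplete, whereas the remaining steps of your plan (the nonzero-$\xi'$ modes via Poincar\'e, and the application of Theorem~\ref{th:l1min}) are fine.
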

\begin{proof}
First, observe that each $L_k$ is a critical point for $J$.
By Theorem~\ref{th:l1min}, it is enough to show that for $k$ large enough 
$\pa^2J(L_{k})[\vphi]>0$ for all $\vphi\in T^{\perp}(\pa L_k)$.  In fact, by an argument of \cite{MS} it is enough to consider $\varphi\in H^1(\pa L_k)$ with zero average on each connected component of $\pa L_k$. Then 
\[\begin{split}
\pa^2J(L_k)[\varphi]&= \int_{\pa L_k}|D_\tau\varphi|^2\, d\H+8\gamma \int_{\pa L_k}  \int_{\pa L_k}G(x,y)\varphi(x)\varphi(y)d\H(x)\,d\H(y) \\
& \quad +4\gamma\int_{\pa L_k}\partial_{\nu}v\varphi^2\,d\H\\
&\geq \int_{\pa L_k}|D_\tau\varphi|^2\, d\H-4\gamma  \|\nabla v_{L_k}\|_{L^{\infty}}\int_{\pa L_k}\vphi^2\,d\H\,.
\end{split}\]
Note that $v_{L_k}(x)=v_{L_k}(x_N)=\frac{1}{k^2}v_L(k x_N)$ and thus 
 $\|\nabla v_{L_k}\|_{L^{\infty}}=\frac{C}{k}$. Hence, the result follows  using  Poincar\'e inequality. 
\end{proof}

\section{The Neumann problem}\label{secneu}
A variant of our result, which is important in the applications, is the Neumann problem: as before we consider the functional
\beq\label{N.1}
J_N(E):=P_\Omega(E)+\gamma\int_{\Omega}|\nabla v_E|^2\, dx
\eeq
and the function
$$u_E=\chi_E-\chi_{\Omega\setminus E}\;,\qquad m=\medint_\Omega u_E\,dx\;,$$
but the condition on $v_E$ is now
$$\begin{cases}
-\Delta v_E=u_E-m\quad \text{ in }\Omega\\
 \displaystyle \int_\Omega v_E\,dx=0\;,\quad \frac{\pa v_E}{\pa\nu}=0\;,\quad \text{on } \pa\Omega \;.\end{cases}$$
As in \eqref{G1} we have
$$\int_\Omega |\nabla v_E|^2\, dx=\int_\Omega\int_\Om G(x,y)u_E(x)u_E(y)\, dxdy\,,$$
where $G$ is the solution of
$$\begin{cases}
-\Delta_y G(x,y)=\delta_x -\frac1{|\Om|}\quad \text{ in }\Omega\\
 \displaystyle \int_\Omega G(x,y)\,dy=0\;,\quad \nabla_y G(x,y)\cdot \nu(y)=0\;,\quad \text{if } y\in\pa\Omega \;.\end{cases}$$

As in the periodic case, if $E$ is a sufficiently smooth (local) minimizer of the functional \eqref{N.1}, then it satisfies
 the Euler-Lagrange equation 
 $$
 H_{\pa E}(x)+4\gamma v_E(x)=\lambda \qquad\text{for all $x\in \partial E\cap \Om$,}
 $$
and moreover $\pa E$ must meet $\pa \Om$ ortoghonally (if at all), see \cite[Remark 2.8]{CS}.
However, in this paper we shall only deal with case $\pa E\cap \pa \Om=\emptyset$. 
We shall refer to any sufficiently smooth set satisfying the Euler-Lagrange equation  as \emph{a regular critical set} for the functional \eqref{N.1}.

Note that, unlike in the periodic case, the functional $J_N$ is not translation invariant, therefore we don't need to consider the distance $\alpha$ defined in \eqref{recente1}. Precisely, we say that a set $E\CC\Om $ is a \emph{local minimizer} if there exists $\de\geq 0$ such that 
$$
J_N(F)\geq J_N(E)\qquad\text{for all $F\subset \Om$, $|F|=|E|$, and $|E\triangle F|\leq \de$.}
$$ 
If the inequality is strict whenever $|E\triangle F|>0$, we say that $E$ is an \emph{isolated local minimizer}.

 Provided that $\pa E$ does not meet $\pa \Om$,  the regularity result stated in  Theorem~\ref{th:regularity} for the periodic case still holds in the Neumann case, without any change 
in the proof; similarly, if $\Phi:\Om\times(-1,1)\to\Om$ is a  $C^2$-flow, defining the  second variation of $J_N$ at $E$ as in Section~\ref{minlocsecvar}, the representation formula in Theorem~\ref{th:J''} holds. Therefore, we consider the same quadratic form $\pa^2 J_N(E)$ defined in \eqref{pa2J}. 

Since the problem is not translation invariant anymore, the spaces $T(\pa E)$, $T^\perp (\pa E)$, and the decomposition \eqref{tperp} are no longer needed. Therefore, we say that $J_N$ has \emph{positive second variation} at the critical set $E$ if 
$$
\pa^2J_N(E)[\vphi]>0\qquad\text{for all $\vphi\in \Ht(\pa E)\setminus \{0\}$. }
$$
As in Lemma~\ref{coerc}, we immediately have that 
\beq\label{c0neu}
m_0:=\inf\Bigl\{\pa^2J_N(E)[\vphi]:\, \vphi\in\Ht(\pa E)\,, \|\vphi\|_{H^1}=1\Bigr\}>0\,.
\eeq
We now state the main result of the section.
\begin{theorem}\label{th:l1minneu}
Let $E\CC\Om$ be a regular  critical set  with positive second variation. Then there exist $C$, $\delta>0$ such that 
$$
J_N(F)\geq J_N(E)+C|E \triangle F|^2\,,
$$
for all $F\subset\Om$, with   $|F|=|E|$ and $|E \triangle F|<\delta$.
 \end{theorem}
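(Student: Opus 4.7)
The plan is to mimic, with notable simplifications, the three-step strategy developed for the periodic Theorem~\ref{th:l1min}. Two features of the Neumann setting help: $J_N$ is not translation invariant, and the assumption $\pa E\CC\Om$ ensures that all the local constructions on $\pa E$ stay away from $\pa\Om$. Consequently, by \eqref{c0neu} we have the \emph{full} coercivity
$$\pa^2 J_N(E)[\vphi]\geq m_0\|\vphi\|_{H^1(\pa E)}^2\qquad\text{for every }\vphi\in\Ht(\pa E)\,,$$
so the whole translation-adjustment machinery (Lemma~\ref{lm:italiano} and the decomposition \eqref{tperp}) becomes unnecessary. Theorems~\ref{th:J''}, \ref{prop:connect}, \ref{th:regularity}, and \ref{th:white}, as well as Lemma~\ref{lm:v}, Lemma~\ref{lemmino}, and Proposition~\ref{prop:EF}, transfer to the Neumann case with essentially verbatim proofs, since only local computations near $\pa E$ are involved.

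Step~1 ($W^{2,p}$-local minimality). Given $F\subset\Om$ with $|F|=|E|$ and $\pa F=\{x+\psi(x)\nu(x):x\in\pa E\}$ for small $\psi$, use the Neumann version of Theorem~\ref{prop:connect} to construct a divergence-free field $X$ whose volume-preserving flow $\Phi$ satisfies $\Phi(\cdot,1)(E)=F=E_1$, and expand
$$J_N(F)-J_N(E)=\frac12\int_0^1(1-t)\frac{d^2}{dt^2}J_N(E_t)\,dt\,.$$
The Euler--Lagrange equation kills the $\Div_\tau(X_\tau(X\cdot\nu))$ term and reduces the problem to the uniform estimate $\pa^2 J_N(E_t)[X\cdot\nu^{E_t}]\geq\frac{m_0}{2}\|X\cdot\nu^{E_t}\|_{H^1(\pa E_t)}^2$; this follows by a contradiction argument analogous to Step~1 of Theorem~\ref{th:c2min}, but strictly simpler, because the weak $H^1$-limit of any putative minimizing sequence is directly an admissible test function in $\Ht(\pa E)$, without any projection onto $T^\perp$. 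Combining with the remainder estimate on $(4\gamma v_{E_t}+H_t-\lambda)$ from Step~2 of Theorem~\ref{th:c2min} yields $J_N(F)\geq J_N(E)+C|E\triangle F|^2$ whenever $\|\psi\|_{W^{2,p}(\pa E)}\leq\de$.

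Steps~2 and 3 (passage to $L^\infty$- and then $L^1$-minimality). For the $L^\infty$ step, follow Theorem~\ref{th:inftymin}: any contradicting sequence $\de_h\to 0$, $E_h\subset\Om$ with $\mathcal I_{-\de_h}(E)\subset E_h\subset\mathcal I_{\de_h}(E)$ and $J_N(E_h)<J_N(E)$ is replaced by minimizers $F_h$ of the penalized obstacle problem
$$\min\Bigl\{J_N(F)+\Lambda\bigl||F|-|E|\bigr|:\,\mathcal I_{-\de_h}(E)\subset F\subset\mathcal I_{\de_h}(E)\Bigr\}\,,$$
which, for $\Lambda$ sufficiently large (chosen via Lemma~\ref{lm:v}), satisfy $|F_h|=|E|$, are $4\Lambda$-minimizers of the area functional, converge in $C^{1,\alpha}$ to $E$ by Theorem~\ref{th:white}, and in $W^{2,p}$ by Lemma~\ref{lemmino} applied to their Euler--Lagrange equation, contradicting Step~1. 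For the quantitative $L^1$ step, mirror the proof of Theorem~\ref{th:l1min} itself: a putative sequence $E_h\to E$ in $L^1$ violating \eqref{quantit.area} is replaced by minimizers $F_h$ of
$$J_N(F)+\Lambda_1\sqrt{(|F\triangle E|-\e_h)^2+\e_h}+\Lambda_2\bigl||F|-|E|\bigr|\,,\qquad\e_h:=|E_h\triangle E|\,.$$
Lemma~\ref{lm:noname} fixes $\Lambda_1$ so that $F_h\to E$ in $L^1$, Proposition~\ref{prop:EF} fixes $\Lambda_2$ so that $|F_h|=|E|$, Lemma~\ref{lm:v} gives that each $F_h$ is a $\Lambda$-minimizer of the area, and Theorem~\ref{th:white} yields $C^{1,\alpha}$-convergence. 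The ratio $\e_h^{-1}|F_h\triangle E|\to 1$ is extracted from the $L^\infty$-minimality just established, exactly as in \eqref{l1min5}, and Lemma~\ref{lemmino} then upgrades the convergence to $W^{2,p}$, contradicting Step~1.

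The only real difficulty is bookkeeping: one must check that all the quantitative parameters (the flow radius $r$, the $W^{2,p}$-smallness threshold $\de$, and the Lagrange multipliers $\Lambda$, $\Lambda_1$, $\Lambda_2$) can be chosen uniformly in $h$ so that the competitor boundaries $\pa F_h$ stay compactly inside $\Om$ and the Neumann boundary conditions on $v_{F_h}$ remain inactive throughout. Since $\pa E\CC\Om$ and all perturbations are normal displacements of small $W^{2,p}$-norm, this is automatic, and no genuine new analytical obstacle arises with respect to the periodic case.
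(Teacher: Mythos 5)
Your proposal is correct and follows the paper's own sketch of the Neumann case essentially step for step: drop the translation machinery (Lemma~\ref{lm:italiano} and the $T^\perp$ decomposition) because of the full coercivity in \eqref{c0neu}, establish $W^{2,p}$-local minimality as in Theorem~\ref{th:c2minneu}, transfer Theorem~\ref{th:inftymin} verbatim with $\delta_0$ small enough that $\mathcal I_{\delta_0}(E)\CC\Om$, and then run the penalization argument of Theorem~\ref{th:l1min} with $|E\triangle F|$ in place of $\alpha(E,F)$. One small inaccuracy worth noting: in Step~1 the uniform lower bound $\pa^2 J_N(F)[\vphi]\geq \tfrac{m_0}{2}\|\vphi\|^2_{H^1(\pa F)}$ for $F$ close to $E$ is not obtained by extracting a weak $H^1$-limit of minimizing sequences on the fixed boundary $\pa E$, but by the continuity argument of Step~1 of Theorem~\ref{th:c2min} (pull-back $\tilde\vphi_h$, show $\pa^2 J_N(F_h)[\vphi_h]-\pa^2 J_N(E)[\tilde\vphi_h]\to 0$, and then invoke the coercivity from \eqref{c0neu}); the conclusion you draw — that no projection onto $T^\perp$ is needed — is of course correct, only the description of the mechanism is slightly off.
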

The proof of the result is very similar to the one of Theorem~\ref{th:l1min} with several simplifications due to the fact that $J_N$ is not translation invariant. We give only an outline of the proof, indicating  the main changes.
As in the periodic case, we start  by  a local minimality result with respect to  small $W^{2,p}$ perturbations. More precisely, 
we have the following counterpart to Theorem~\ref{th:c2min}: 
\begin{theorem}\label{th:c2minneu}
Let  $p>\max\{2, N-1\}$ and let $E\CC\Om$ be a regular  critical set for $J_N$  with positive second variation. Then there exist $\delta>0$, $C_0>0$ such that 
$$
J_N(F)\geq J_N(E)+C_0|E \triangle F|^2\,,
$$
whenever  $F\subset \T$ satisifes $|F|=|E|$ and $\pa F=\{x+\psi(x)\nu(x):\, x\in \pa E\}$ for some $\psi$ with $\|\psi\|_{W^{2,p}(\pa E)}\leq \delta$.   
 \end{theorem}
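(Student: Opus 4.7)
My plan is to mirror the proof of Theorem~\ref{th:c2min}, exploiting the crucial simplification that $J_N$ has no translation invariance, so $\Ht(\pa E)$ itself plays the role that $T^\perp(\pa E)$ had in the periodic case and Lemma~\ref{lm:italiano} is not needed. Since $\pa E \CC \Om$, all relevant geometric objects localize near $\pa E$, and every estimate from the periodic proof transfers once the periodic Green function is replaced by the Neumann one.

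First I will establish the analog of \eqref{c2unif}: there exists $\delta_1>0$ such that for every $F$ with $\pa F=\{x+\psi(x)\nu(x):x\in\pa E\}$ and $\|\psi\|_{W^{2,p}(\pa E)}\leq\delta_1$,
$$
\pa^2J_N(F)[\vphi]\geq \frac{m_0}{2}\|\vphi\|^2_{H^1(\pa F)}\qquad\text{for all }\vphi\in\Ht(\pa F),
$$
with $m_0>0$ given by \eqref{c0neu}. The argument is by contradiction: given a violating sequence $(F_h,\vphi_h)$ with $F_h\to E$ in $W^{2,p}$ and $\|\vphi_h\|_{H^1}=1$, I pull $\vphi_h$ back by diffeomorphisms $\Phi_h:\pa E\to\pa F_h$ and subtract the average, obtaining $\tilde\vphi_h\in\Ht(\pa E)$. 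The Dirichlet and curvature pieces of $\pa^2J_N(F_h)[\vphi_h]$ converge to the corresponding pieces of $\pa^2J_N(E)[\tilde\vphi_h]$ by $W^{2,p}$-convergence and Sobolev embedding; the nonlocal piece is controlled by showing $\vphi_h\H\lfloor\pa F_h-\tilde\vphi_h\H\lfloor\pa E\to 0$ in $H^{-1}(\Om)$, following the calculations around \eqref{proj3} in Step~1 of Theorem~\ref{th:c2min} verbatim. Extracting a weak limit $\tilde\vphi_0$, either $\tilde\vphi_0\neq 0$ and positive definiteness of $\pa^2J_N(E)$ contradicts the bound $m_0/2$, or $\tilde\vphi_0=0$ and the Dirichlet energies $\|D_\tau\tilde\vphi_h\|^2_{L^2}\to 1$ yield the same contradiction.

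Next, I apply Theorem~\ref{prop:connect} (whose construction is entirely local in $\mathcal N(\pa E)$ and hence unchanged in the Neumann setting) to obtain a volume-preserving flow $\Phi(\cdot,t)$ from $E$ to $F$ through sets $E_t\CC\Om$, generated by a $C^2$ field $X$ with $\Div X=0$ in a tubular neighborhood of $\pa E$. Writing
$$
J_N(F)-J_N(E)=\int_0^1(1-t)\,\frac{d^2}{dt^2}J_N(E_t)\,dt
$$
and invoking Theorem~\ref{th:J''} together with the criticality of $E$, the identity $\Div X=0$ near $\pa E$, and $|E_t|\equiv|E|$, I obtain
$$
\frac{d^2}{dt^2}J_N(E_t)=\pa^2J_N(E_t)[X\cdot\nu^{E_t}]-\int_{\pa E_t}\bigl[(4\gamma v_{E_t}+H_t)-\lambda\bigr]\Div_{\tau_t}\bigl(X_{\tau_t}(X\cdot\nu^{E_t})\bigr)\,d\H,
$$
where $\lambda$ is the Lagrange multiplier associated with $E$. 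Since $X\cdot\nu^{E_t}\in\Ht(\pa E_t)$ by Theorem~\ref{prop:connect}, the first term is bounded below by $\tfrac{m_0}{2}\|X\cdot\nu^{E_t}\|^2_{H^1(\pa E_t)}$ via the coercivity step. The remainder is handled exactly as in \eqref{c2min7}--\eqref{c2min8}: the $L^p$ norm of $(4\gamma v_{E_t}+H_t)-\lambda$ tends to zero as $\delta\to 0$ by the $W^{2,p}$-closeness of $E_t$ to $E$ and Lemma~\ref{lm:v}, and Lemma~\ref{lm:contazzi} together with the Sobolev embedding $H^1(\pa E_t)\hookrightarrow L^{2p/(p-2)}(\pa E_t)$ (available since $p>\max\{2,N-1\}$) bounds the remainder by $\tfrac{m_0}{4}\|X\cdot\nu^{E_t}\|^2_{H^1(\pa E_t)}$ for $\delta$ small. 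Combining these with the lower bound $\|X\cdot\nu^{E_t}\|_{L^2(\pa E_t)}\geq(1-\e)\|\psi\|_{L^2(\pa E)}$ obtained as in \eqref{s22} and the elementary graph estimate $|E\triangle F|\leq C\|\psi\|_{L^2(\pa E)}$ yields $J_N(F)-J_N(E)\geq C_0|E\triangle F|^2$.

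The main obstacle I anticipate is the nonlocal part of the uniform coercivity step: checking that the $H^{-1}$ pushforward argument used in the periodic case adapts to the Neumann Green function, which satisfies $\int_\Om G(x,\cdot)\,dy=0$ and a zero Neumann boundary condition. Once that essentially technical transcription is in place, the remainder of the argument is notably simpler than its periodic counterpart because the absence of translation modes eliminates Lemma~\ref{lm:italiano} and the entire bookkeeping on $T^\perp(\pa E)$.
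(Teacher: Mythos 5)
Your proposal is correct and follows exactly the route the paper takes: drop the $T^\perp(\pa E)$ projection since $J_N$ is not translation invariant, prove the analog of \eqref{c2unif} over all of $\Ht(\pa F)$ by the same contradiction/$H^{-1}$ pushforward argument, and re-run Step~2 of Theorem~\ref{th:c2min} skipping \eqref{s20} and the translation by $\sigma$ while keeping \eqref{s22}. One small cosmetic remark: the $\lambda$ that you insert directly into the second-derivative formula is there because $\int_{\pa E_t}\Div_{\tau_t}\bigl(X_{\tau_t}(X\cdot\nu^{E_t})\bigr)\,d\H=0$ (divergence on a closed hypersurface), independently of the criticality of $E$; the criticality of $E$ only enters when choosing $\lambda$ as the Lagrange multiplier of $E$ so that $\|(4\gamma v_{E_t}+H_t)-\lambda\|_{L^p(\pa E_t)}$ is small. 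Your argument is otherwise the one in the paper.
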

\begin{proof}[Sketch of the proof.] Since the functional is not translation invariant we don't need Lemma~\ref{lm:italiano},  and
inequality \eqref{c2unif} proved in Step 1 of the proof of Theorem~\ref{th:c2min} simplifies to 
$$
\inf\Bigl\{\pa^2J_N(F)[\vphi]:\, \vphi\in \Ht(\pa F)\,, \|\vphi\|_{H^1(\pa F)}=1\Bigr\}\geq\frac{m_0}2\,,
$$
where $m_0$ is the constant defined in \eqref{c0neu}. The proof of this inequality goes exactly as before.

Coming to Step 2 of the proof of Theorem~\ref{th:c2min}, we don't need \eqref{s20} and thus we don't need to replace $F$ by  a suitable translated $F-\sigma$. Instead, we only need to observe that \eqref{s22} is still satisfied and that the rest of the proof remains unchanged.
\end{proof}

Next we observe that the statement of Theorem~\ref{th:inftymin} and its proof remain unchanged, provided  we choose $\delta_0$ such that $\mathcal{I}_{\de_0}(E)\CC\Om$.

We are now left with the  last step of the proof of Theorem~\ref{th:l1minneu}.
\begin{proof}[Proof of Theorem~\ref{th:l1minneu}.] 
 We want to pass from the $L^\infty$-local minimality to the $L^1$-local minimality.
This can be done arguing by contradiction as in the proof of  Theorem~\ref{th:l1min}, by assuming that 
 there exists a sequence $E_h\subset\Om$, $|E_h|=|E|$, such that $\e_h:=|E_h\triangle E|\to 0$ and 
$$
J_N(E_h)\leq J_N(E)+\frac{C_0}{4}|E_h\triangle E|^2\,,
$$
where $C_0$ is as in the statement of Theorem~\ref{th:c2minneu}. 
Then, one replaces the sequence
$E_h$ with a sequence of minimizers $F_h$ of the  penalized functionals
$$
J_N(F)+\Lambda_1\sqrt{\bigl(|F\triangle E|-\e_h\bigr)^2+\e_h}+\Lambda_2\bigl||F|-|E|\bigr|\,,
$$
for suitable   $\Lambda_1$ and $\Lambda_2$ chosen as in the proof of Theorem~\ref{th:l1min}. 

Exactly as before,  one can prove that $\chi_{F_h}\to \chi_{E}$ in $L^1$. An obvious modification of the argument used to prove 
\eqref{infty5} shows that the sets $F_h$ are $\Lambda$-minimizers of the area functional for some $\Lambda>0$ independent of $h$.
Then, Theorem~\ref{th:white} yields that $F_h\to E$ in $C^{1,\alpha}$ for all $\alpha\in (0, \frac12)$. In particular, 
$F_h\CC\Om$ for $h$ large enough. The rest of the proof goes unchanged.
\end{proof}

As  in the previous section we may consider the Ohta-Kawasaki energy, rewritten from \eqref{in.OK} in terms of $u,v$ as
$$
\mathcal E_\e(u)=\e\int_{\Om}|\nabla u|^2\, dx+\frac1\e\int_\Om(u^2-1)^2\, dx
+\gamma_0\int_\Om |\nabla v|^2\,dx$$
where $\gamma_0=16\gamma/3$ and $v$ is the solution to 
 $$
 \begin{cases}
-\Delta v=u-m\quad \text{ in }\Omega\\
 \displaystyle \int_\Omega v\,dx=0\;,\quad \frac{\pa v}{\pa\nu}=0\;,\quad \text{on } \pa\Omega \;.
 \end{cases}
 $$
Fix $m\in (-1, 1)$. We say that a function $u\in H^1(\Om)$ is an \emph{isolated local minimizer} for the functional $\mathcal E_\e$ with prescribed volume fraction $m$, if $\medintinrigo_\Om u\, dx =m$ and there exists $\de>0$ such
that 
$$
\mathcal E_\e(u)<\mathcal E_\e(w) \qquad \text{for all $w\in H^1(\Om)$ s.t. }\medint_\Om w\, dx=m\quad\text{and}\quad 
0<\|u-w\|_{L^1(\Om)}\leq \de\,.
$$
Using \cite[Theorem 2.1]{KS} in place of \cite[Proposition 3.2]{CS2} and the minimality result of Theorem~\ref{th:l1minneu} we get:
\begin{theorem}\label{th:OKneu}
Let $E$ be a regular  critical set  for the functional $J_N$ with positive second variation and $u_E=\chi_E-\chi_{\Om\setminus E}$. 
Then there exist  $\e_0>0$ and a family $\{u_\e\}_{\e<\e_0}$ of isolated local minimizers of $\mathcal E_\e$ with prescribed volume 
$m=\medintinrigo_\Om u_E\, dx$ such that 
$u_\e\to u_E$ in $L^1(\Om)$ as $\e\to 0$.
\end{theorem}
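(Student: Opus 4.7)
The plan is to mimic the proof of Theorem~\ref{th:OK} verbatim, swapping in the Neumann analogues at each step. The starting point is the $\Gamma$-convergence of $\mathcal E_\e$, in the $L^1(\Om)$ topology, to a suitable multiple of the sharp-interface energy $J_N$ on $BV(\Om;\{-1,1\})$. The classical Modica-Mortola theorem applies unchanged under homogeneous Neumann conditions (no boundary contribution appears in the diffuse energy) and handles the first two terms, producing $\frac{16}{3}\cdot\frac12|Du|(\Om)$. The nonlocal Dirichlet piece $\gamma_0\int_\Om|\nabla v|^2\,dx$ is continuous under $L^1$-convergence of $u$: a literal adaptation of Lemma~\ref{lm:v} to the Neumann boundary value problem (via standard elliptic regularity for $-\Delta v=u-m$ with $\partial_\nu v=0$ and zero average) gives the $L^1$-Lipschitz estimate on the nonlocal term, so adding it to the Modica-Mortola $\Gamma$-limit yields $\mathcal E_\e\stackrel{\Gamma}{\to}\frac{16}{3}J_N$ on the relevant volume-constrained class.

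Next I would invoke the abstract Kohn-Sternberg selection result \cite[Theorem 2.1]{KS}, the Neumann counterpart of \cite[Proposition 3.2]{CS2} used for Theorem~\ref{th:OK}: whenever the $\Gamma$-limit admits an isolated $L^1$-local minimizer under a prescribed average, the approximating family inherits a sequence of isolated $L^1$-local minimizers converging to it in $L^1$. Theorem~\ref{th:l1minneu} is precisely the input required, since it exhibits $u_E=\chi_E-\chi_{\Om\setminus E}$ as an isolated (and in fact quantitatively isolated) $L^1$-local minimizer of $J_N$ at volume fraction $m$.

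Concretely, I would fix $\de$ as in Theorem~\ref{th:l1minneu} and solve, for each small $\e>0$, the obstacle problem
\[
\min\Bigl\{\mathcal E_\e(u):\,u\in H^1(\Om),\ \medintinrigo_\Om u\,dx=m,\ \|u-u_E\|_{L^1(\Om)}\leq\de\Bigr\}.
\]
Existence of a minimizer $u_\e$ follows from the direct method. A standard Modica-Mortola recovery sequence for $u_E$ gives $\limsup_\e\mathcal E_\e(u_\e)\leq\frac{16}{3}J_N(E)$; compactness in $L^1$ (from the equiboundedness of the Modica-Mortola energies) combined with the $\Gamma$-liminf inequality then forces any $L^1$-cluster point $u^*=\chi_{F^*}-\chi_{\Om\setminus F^*}$ to satisfy $J_N(F^*)\leq J_N(E)$ with $|F^*|=|E|$ and $|F^*\triangle E|\leq\de$. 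The isolated local minimality from Theorem~\ref{th:l1minneu} then forces $F^*=E$, so $u_\e\to u_E$ in $L^1(\Om)$; in particular, for $\e$ small, $u_\e$ sits in the strict interior of the obstacle and is thus a genuine local minimizer of $\mathcal E_\e$ under the volume constraint alone.

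The isolation of $u_\e$ is inherited from the strict quadratic lower bound of Theorem~\ref{th:l1minneu}: a competitor $w$ sufficiently close to $u_\e$ in $L^1$ is still close to $u_E$, so applying the $\Gamma$-liminf inequality together with the quadratic gap $J_N(F_w)\geq J_N(E)+C|F_w\triangle E|^2$ (after passing to any limit point) rules out equality. The only non-automatic ingredient is the $L^1$-continuity of the nonlocal term, which has already been established, so no serious obstacle arises beyond assembling the pieces.
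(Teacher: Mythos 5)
Your proposal is correct and takes essentially the same route as the paper: the paper's own proof is simply the one-line replacement of \cite[Proposition 3.2]{CS2} (used for Theorem~\ref{th:OK}) by its Neumann analogue \cite[Theorem 2.1]{KS}, combined with the $L^1$-local isolated minimality supplied by Theorem~\ref{th:l1minneu}. Your additional sketch of the $\Gamma$-convergence of $\mathcal E_\e$ and of the obstacle-problem selection argument simply unpacks what the Kohn--Sternberg citation packages, and is consistent with it.
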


\begin{remark}[Existence of many droplet  stable configurations for the Ohta-Kawasaki energy in two and three dimensions]
Let $\Om$ be a bounded smooth open set  in $\R^3$. In \cite[Theorems 2.1 and 2.2]{RW4} the authors construct a stable critical configuration for the sharp interface functional \eqref{N.1} with a many droplet pattern. More precisely, for any $k\in \N$  they show the existence of a parameter  range
 for $\gamma$ (see \cite[equation (2.12)]{RW4}) such that $J_N$ admits critical set, with positive second variation, made up of
 $k$ connected components that are close to small balls  compactly contained in $\Om$. Theorem~\ref{th:OKneu} then applies and yields the existence of isolated local minimizers of the Ohta-Kawasaki energy $\mathcal E_\e$ with a many droplet pattern for small values of $\e$. A similar two-dimensional construction, to which  Theorem~\ref{th:OKneu} applies as well, has been carried out in \cite{RW3}.
\end{remark}

\section{Appendix}
This section is devoted to the computation of the second variation and to the proof of two auxiliary results.
\begin{proof}[Proof of Theorem~\ref{th:J''}]
We set 
\begin{equation}\label{EF}
E(t):=P_\T(E_t)\quad\text{and}\quad F(t):=\int_\T|\nabla v_t|^2\, dx\,. 
\end{equation}
Arguing as in Step 3 of the proof of \cite[Proposition 3.9]{CMM} (see also \cite[Section 9]{Simon}) we get
\begin{equation}\label{E''}
\begin{split}
E''(0)&= \int_{\pa E}\Bigl(|D_\tau(X\cdot \nu)|^2-|B_{\pa E}|^2(X\cdot \nu)^2\Bigr)\, d\H\\
&+\int_{\pa E}H\Bigl(H(X\cdot \nu)^2+Z\cdot\nu-2X_\tau\cdot D_\tau(X\cdot\nu)+
B_{\pa E}[X_\tau, X_\tau]\Bigr)\, d\H\,,
\end{split}
\end{equation}
where $Z:=\displaystyle\frac{\partial^2 \Phi}{\partial t^2}(x,0)=DX[X]$ and $H$ stands for $H_{\pa E}$.
On the other hand, by \cite[equation (2.67)]{CS}
\begin{equation}\label{F''}
\begin{split}
F''(0)=&8\gamma \int_{\pa E}  \int_{\pa E}G(x,y)\bigl(X\cdot \nu\bigl)(x)\bigl(X\cdot \nu\bigr)(y)d\H(x)\,d\H(y)\\
&+4\gamma\int_{\pa E}\Div(v X)(X\cdot \nu)\, d\H\,.
\end{split}
\end{equation}
Note that
\begin{equation}\label{J''1}
\begin{split}
&\int_{\pa E}\Div(vX)(X\cdot \nu)\, d\H = \int_{\pa E} \nabla v\cdot X(X\cdot \nu)\, d\H+\int_{\pa E}v(\Div X)X\cdot\nu\, d\H\\
&\quad=\int_{\pa E}\pa_\nu v(X\cdot\nu)^2\, d\H+\int_{\pa E}\bigl(D_\tau v \cdot X_\tau\bigr) X\cdot\nu\, d\H+
\int_{\pa E}v(\Div X)X\cdot\nu\, d\H\\
&\quad=\int_{\pa E}\pa_\nu v(X\cdot\nu)^2\, d\H-\int_{\pa E}v\Div_\tau\bigl(X_\tau(X\cdot \nu)\bigr)\, d\H+
\int_{\pa E}v(\Div X)X\cdot\nu\, d\H\,,
\end{split}
\end{equation}
where in the last equality we integrated by parts.
Finally, we claim that 
\begin{equation}\label{claimJ''}
H(X\cdot \nu)^2+Z\cdot\nu-2X_\tau\cdot D_\tau(X\cdot\nu)+
B_{\pa E}[X_\tau, X_\tau]=(X\cdot \nu)\Div X-\Div_\tau\bigl(X_\tau(X\cdot \nu)\bigr)\,.
\end{equation}
Notice that the thesis follows from \eqref{EF}--\eqref{claimJ''}. Hence, it remains to show that \eqref{claimJ''} holds.
To this aim, we observe that 
\[
\begin{split}
X_\tau\cdot D_\tau(X\cdot\nu)&= X_\tau \cdot D(X\cdot\nu)=\nu\cdot DX[X_\tau]+
X\cdot D\nu[X_\tau]\\
&= \nu\cdot DX[X_\tau]+
X_\tau\cdot D\nu[X_\tau]= \nu\cdot DX[X_\tau]+B_{\pa E}[X_\tau,X_\tau]\,.
\end{split}
\]
Therefore, from the last equality, recalling that $Z=DX[X]$, we have that
\begin{equation}\label{J''2}
\begin{split}
H(X\cdot \nu)^2+Z\cdot\nu&-2X_\tau\cdot D_\tau(X\cdot\nu)+
B_{\pa E}[X_\tau, X_\tau]\\
&=H(X\cdot \nu)^2+\nu\cdot DX[X]-X_\tau\cdot D_\tau(X\cdot\nu)-
\nu\cdot DX[X_\tau]\\
&=H(X\cdot \nu)^2+\nu\cdot DX[(X\cdot\nu)\nu]-X_\tau\cdot D_\tau(X\cdot\nu)\\
&=H(X\cdot \nu)^2+(X\cdot\nu)(\Div X-\Div_\tau X)-X_\tau\cdot D_\tau(X\cdot\nu)\,.
\end{split}\end{equation}
On the other hand,
\[
\begin{split}
\Div_\tau\bigl(X_\tau(X\cdot\nu)\bigr)&= X_\tau\cdot D_\tau(X\cdot\nu)+(X\cdot\nu)\Div_\tau X_\tau\\
&= X_\tau\cdot D_\tau(X\cdot\nu)+(X\cdot\nu)\Div_\tau X-(X\cdot\nu)\Div_\tau \bigl((X\cdot\nu)\nu\bigr)\\
&= X_\tau\cdot  D_\tau(X\cdot\nu)+(X\cdot\nu)\Div_\tau X-H(X\cdot\nu)^2\,.
\end{split}
\]
Thus, claim \eqref{claimJ''} follows combining the above identity with \eqref{J''2}.
\end{proof}

\begin{lemma}\label{lm:contazzi}
Under the assumptions of {\rm Theorem~\ref{th:c2min}}, given $q\geq 1$, there exist $\delta$, $C>0$ such that if 
$\|\psi\|_{W^{2,p}(\pa E)}\leq \delta$, then for all $t\in [0,1]$
\beq\label{contazzi0}
\| X\|_{L^q(\pa E_t)}\leq C \|X\cdot \nu^{E_t}\|_{L^q(\pa  E_t)}\,, \qquad
\|D_{\tau_t}\bigl( X\bigr)\|_{L^2(\pa E_t)}\leq C \|  X\cdot \nu^{  E_t}\|_{H^1(\pa  E_t)}\,,
\eeq
where $X$ is defined by \eqref{connect6}.
\end{lemma}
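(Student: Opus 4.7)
My plan is to exploit the pointwise structure of $X$ that comes directly from \eqref{connect6}. On the tubular neighborhood $\mathcal{N}_r(\pa E)$ we have
\[
 X(y) = t_\sigma(G(y))\,\zeta(y)\,\nabla d_\sigma(y),
\]
so $X$ is everywhere a scalar multiple of the fixed smooth vector field $\nabla d_\sigma$. By \eqref{connectone}, $\nabla d_\sigma$ can be made arbitrarily $C^2$-close to $\nabla d = \nu^E$ by choosing $\sigma$ small. Combined with \eqref{campo0} and Sobolev embedding (available since $p > N-1$), the surface $\pa E_t$ is $C^{1,\alpha}$-close to $\pa E$ uniformly in $t\in[0,1]$, and $\nu^{E_t}$ is uniformly $C^{0,\alpha}$-close to $\nu^E$ modulo the normal projection. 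Hence, for $\sigma$, $r$, and $\delta$ small enough, the scalar
\[
 \beta(y) := \nabla d_\sigma(y)\cdot\nu^{E_t}(y)
\]
satisfies $\beta\geq 1/2$ on $\pa E_t$, uniformly in $t$ and in $\psi$.

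From this I obtain the key factorization
\[
 X(y) = (X\cdot\nu^{E_t})(y)\,h(y), \qquad h(y) := \frac{\nabla d_\sigma(y)}{\beta(y)},
\]
valid on $\pa E_t$ for every $t$. The vector field $h$ only carries the dependence on $\psi$ through the geometric quantity $\beta$, and $\|h\|_{L^\infty(\pa E_t)}$ is bounded by a universal constant. Taking $L^q$-norms on both sides of the factorization immediately yields the first inequality in \eqref{contazzi0}.

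For the second inequality, I differentiate the factorization:
\[
 D_{\tau_t}X = D_{\tau_t}(X\cdot\nu^{E_t})\otimes h + (X\cdot\nu^{E_t})\,D_{\tau_t}h.
\]
The first piece is bounded in $L^2$ by $\|h\|_\infty\,\|D_{\tau_t}(X\cdot\nu^{E_t})\|_{L^2}$. The quotient rule gives
\[
 D_{\tau_t}h = \frac{D_{\tau_t}\nabla d_\sigma}{\beta}-\frac{\nabla d_\sigma\otimes D_{\tau_t}\beta}{\beta^2},
\]
where $D_{\tau_t}\beta$ involves $D_{\tau_t}\nu^{E_t}= B_{\pa E_t}$. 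Since \eqref{campo0} provides a uniform $W^{2,p}$-bound on the diffeomorphism $\Phi(\cdot,t)$ (and hence on the parametrization of $\pa E_t$ as a graph over $\pa E$), the second fundamental form $B_{\pa E_t}$ is uniformly bounded in $L^p(\pa E_t)$, and therefore so is $D_{\tau_t}h$. Hölder's inequality together with the Sobolev embedding $H^1(\pa E_t)\hookrightarrow L^{2p/(p-2)}(\pa E_t)$---valid precisely because $p>N-1$---then gives
\[
 \|(X\cdot\nu^{E_t})\,D_{\tau_t}h\|_{L^2(\pa E_t)} \leq \|D_{\tau_t}h\|_{L^p(\pa E_t)}\,\|X\cdot\nu^{E_t}\|_{L^{2p/(p-2)}(\pa E_t)} \leq C\,\|X\cdot\nu^{E_t}\|_{H^1(\pa E_t)},
\]
which completes the proof.

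The main technical point is arranging that all the implicit constants (the lower bound on $\beta$, the upper bound on $\|h\|_{L^\infty}$, and the $L^p$-bound on $D_{\tau_t}h$) are uniform in $t\in[0,1]$ and in the perturbation $\psi$. This uniformity follows from the $W^{2,p}$-control of $\Phi(\cdot,t)$ furnished by \eqref{campo0} and from the smallness of $\sigma$ in \eqref{connectone}; once $\sigma$, $r$, and $\delta$ are fixed in terms of $E$ and $p$, the constants in \eqref{contazzi0} depend only on $E$ and $q$.
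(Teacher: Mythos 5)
Your proof is correct and rests on the same structural fact that the paper's proof uses, namely that $X$ is pointwise a scalar multiple of the fixed smooth field $\nabla d_\sigma$, which by \eqref{connectone} and \eqref{campo0} is uniformly close to $\nu^{E_t}$ on $\pa E_t$, combined with the H\"older--Sobolev step that requires $p>N-1$. The paper estimates the tangential defect $X_{\tau_t}=X-(X\cdot\nu^{E_t})\nu^{E_t}$ directly, proving $|X_{\tau_t}|\le 4\e|X|$ and the analogous gradient estimate by comparing $\nu_\sigma:=\nabla d_\sigma/|\nabla d_\sigma|$ with $\nu^{E_t}$; your factorization $X=(X\cdot\nu^{E_t})\,h$ with $h=\nabla d_\sigma/(\nabla d_\sigma\cdot\nu^{E_t})$ is an equivalent but somewhat cleaner packaging of the same computation, where the uniform bounds on $\|h\|_{L^\infty}$ and $\|D_{\tau_t}h\|_{L^p}$ carry exactly the information the paper extracts from $\|\nu_\sigma-\nu^{E_t}\|_{L^\infty}$ and from $\|D_{\tau_t}\nu_\sigma\|_{L^p}+\|D_{\tau_t}\nu^{E_t}\|_{L^p}$. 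One detail worth flagging: the lemma statement writes $D_{\tau_t}(X)$, but the paper's own proof (and its application in Theorem~\ref{th:c2min}) actually bounds $D_{\tau_t}(X_{\tau_t})$; you prove the bound for $D_{\tau_t}X$ as printed, which in any case implies the $D_{\tau_t}(X_{\tau_t})$ version after one further application of the same H\"older--Sobolev estimate using the uniform $L^p$ bound on $B_{\pa E_t}$, so there is no gap in substance.
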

\begin{proof}
Given $\e>0$, by \eqref{campo0} it follows that there exists $\delta>0$ such that if $\|\psi\|_{W^{2,p}(\pa E)}\leq \delta$, then 
$$
\|\nu-\nu^{  E_t}\|_{L^{\infty}(\pa   E_t)}\leq \e\,.
$$
Moreover, setting $\nu_\sigma:=\frac{\nabla d_\sigma}{|\nabla d_\sigma|}$, by \eqref{connect7} we have that $X=(X\cdot\nu_{\sigma})\nu_\sigma$. Moreover, 
from the above inequality and \eqref{connectone}, we get
\beq\label{contazzi1}
\|\nu_\sigma-\nu^{  E_t}\|_{L^{\infty}(\pa   E_t)}\leq 2\e\,.
\eeq
Thus, we have
\begin{align}
\bigl|X_{\tau_t}\bigr|& = \bigl|X-(X\cdot\nu^{  E_t})\nu^{  E_t}\bigr|= \bigl|(X\cdot\nu_\sigma
)\nu_\sigma
-(X\cdot\nu^{  E_t})\nu^{  E_t}\bigr|
\nonumber\\
&\leq \bigl|(   X\cdot\nu^{  E_t})(\nu_\sigma-\nu^{  E_t})\bigr|+
\bigl|   X\cdot (\nu_\sigma-\nu^{  E_t})\nu_\sigma\bigr|
\leq 4\e|   X|\,.\label{contazzi2}
\end{align}
Hence, the first inequality in \eqref{contazzi0} follows.

We now prove the second estimate in \eqref{contazzi0}. Recalling   \eqref{contazzi1}, we have
\begin{align*}
\bigl|D_{\tau_t} X_{\tau_t}\bigr|& = \bigl|D_{\tau_t}X-D_{\tau_t}\bigl((X\cdot\nu^{  E_t})\nu^{  E_t}\bigr)\bigr| = \bigl|D_{\tau_t}\bigl((X\cdot \nu_\sigma
)\nu_\sigma
\bigr)-D_{\tau_t}\bigl((X\cdot\nu^{  E_t})\nu^{  E_t}\bigr)\bigr|\\
&\leq \bigl|D_{\tau_t}\bigl((   X\cdot \nu^{  E_t})(\nu_\sigma
-\nu^{  E_t})\bigr)\bigr|+
\bigl|D_{\tau_t}\bigl(   X\cdot (\nu_\sigma
-\nu^{  E_t})\nu_\sigma
\bigr)\bigr|\\
& \leq C\e \bigl(\bigl|D_{\tau_t}    X\bigr|+\bigl|D_{\tau_t} \bigl(   X\cdot \nu^{  E_t} \bigr)\bigr| \bigr)+
C|X|\bigl(\bigl|D_{\tau_t}\nu_\sigma
  \bigr|+\bigl|D_{\tau_t}\nu^{  E_t}  \bigr|\bigr)\,.
\end{align*}
From this inequality, taking $\e$ (and in turn $\delta$) sufficiently small, also by \eqref{contazzi2}, we deduce that
$$
\bigl|D_{\tau_t}   X_{\tau_t}\bigr|\leq C \bigl|D_{\tau_t} \bigl(   X\cdot \nu^{  E_t} \bigr)\bigr| +
C|   X\cdot \nu^{  E_t} |\bigl(\bigl|D_{\tau_t}\nu_\sigma
  \bigr|+\bigl|D_{\tau_t}\nu^{  E_t}  \bigr|\bigr)\,.
$$
Integrating this inequality, we obtain
\begin{align*}
\bigl\|D_{\tau_t}   X_{\tau_t}\bigr\|^2_{L^2(\pa   E_t )}& \leq C
\bigl\|D_{\tau_t} \bigl(   X\cdot \nu^{  E_t} \bigr)\bigr\|^2_{L^2(\pa   E_t )}
 +C\int_{\pa   E_t }|   X\cdot \nu^{  E_t}|^2\bigl[\bigl|D_{\tau_t}\nu_\sigma
  \bigr|+\bigl|D_{\tau_t}\nu^{  E_t}  \bigr|\bigr]^2\, d\H\\
& \leq C\bigl\|   X\cdot \nu^{  E_t}\bigr\|^2_{H^1(\pa   E_t )}
+C\bigl\|   X\cdot \nu^{  E_t}\bigr\|^{2}_{L^{\frac{2p}{p-2}}(\pa   E_t )}
\bigl\| \bigl|D_{\tau_t}\nu_\sigma
  \bigr|+\bigl|D_{\tau_t}\nu^{  E_t}  \bigr|\bigr\|^2_{L^p(\pa   E_t )}\\
& \leq C\bigl\|   X\cdot \nu^{  E_t}\bigr\|^2_{H^1(\pa   E_t )}\,,
\end{align*}
where the last inequality follows from the Sobolev Embedding theorem and the assumption $p>\max\{2, N-1\}$.
\end{proof}
The next lemma is a consequence of  the classical $L^p$ elliptic  theory.
\begin{lemma}\label{lemmino}
Let $E$ be a set of class $C^2$ and  let  $E_h$ be a sequence of  sets of class $C^{1, \alpha}$ for some $\alpha\in (0,1)$ such that 
$$
\pa E_h=\{x+\psi_h(x)\nu(x):x\in \pa E\}\,,
$$
where $\psi_h\to 0$ in $C^{1, \alpha}(\pa E)$. Assume also that  $H_{\pa E_h}\in L^p(\pa E_h)$ and that 
\beq\label{lemmino1}
H_{\pa E_h}\bigl(\cdot+\psi_h(\cdot)\nu(\cdot)\bigr)\to H_{\pa E}(\cdot)\qquad\text{in $L^{p}( \pa E)$.}
\eeq
Then $\psi_h\to 0$ in $W^{2,p}(\pa E)$.
\end{lemma}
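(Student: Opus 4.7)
\medskip

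\noindent\textbf{Proof proposal for Lemma~\ref{lemmino}.}
The plan is to localize, view the mean curvature equation as a quasilinear second-order elliptic PDE in divergence form, freeze the leading coefficients via the $C^{1,\alpha}$-convergence already in hand, and apply the linear $L^p$ Calder\'on--Zygmund estimate to the difference $g_h-g$.

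First I would cover $\pa E$ by finitely many coordinate patches in each of which $\pa E$ is the graph of a $C^2$ function $g:B'\to \R$, $B'\subset \R^{N-1}$, and, because $\psi_h\to 0$ in $C^{1,\alpha}(\pa E)$, also $\pa E_h$ is the graph of a function $g_h:B'\to \R$ with $g_h\to g$ in $C^{1,\alpha}(B')$. A direct computation shows that the mean curvature of such a graph satisfies
\[
\sum_{i,j=1}^{N-1}a_{ij}(\nabla g_h)\,\pa_{ij}g_h \;=\; \sqrt{1+|\nabla g_h|^2}\,H_{\pa E_h}(\cdot,g_h(\cdot))\;,
\qquad a_{ij}(p):=\delta_{ij}-\frac{p_ip_j}{1+|p|^2}\,,
\]
and analogously for $g$. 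The matrix $(a_{ij}(p))$ is uniformly elliptic on compact sets of $p$, and since $\nabla g_h\to\nabla g$ in $C^\alpha$, the coefficients $a_{ij}(\nabla g_h)$ are equicontinuous with a common modulus of continuity and converge uniformly to $a_{ij}(\nabla g)$.

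Next I would subtract the two equations. Setting $w_h:=g_h-g$, one obtains
\[
\sum_{i,j}a_{ij}(\nabla g_h)\,\pa_{ij}w_h \;=\; R_h\,,
\]
where
\[
R_h := \sqrt{1+|\nabla g_h|^2}\,H_{\pa E_h}(\cdot,g_h) - \sqrt{1+|\nabla g|^2}\,H_{\pa E}(\cdot,g) - \sum_{i,j}\bigl(a_{ij}(\nabla g_h)-a_{ij}(\nabla g)\bigr)\pa_{ij}g\,.
\]
The first two terms combine into something that tends to $0$ in $L^p(B')$ thanks exactly to assumption \eqref{lemmino1} together with the uniform convergence $\nabla g_h\to\nabla g$, while the last term tends to $0$ in $L^p(B')$ because $a_{ij}(\nabla g_h)\to a_{ij}(\nabla g)$ uniformly and $\pa_{ij}g\in L^\infty(B')$ ($g$ being of class $C^2$). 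Hence $R_h\to 0$ in $L^p(B')$.

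Now I would invoke the interior Calder\'on--Zygmund $L^p$ estimate for linear elliptic equations with continuous coefficients: since the operators $L_h u:=\sum a_{ij}(\nabla g_h)\pa_{ij}u$ are uniformly elliptic with a common modulus of continuity of the coefficients, there exists a constant $C$ independent of $h$ such that, on a slightly smaller ball $B''\CC B'$,
\[
\|w_h\|_{W^{2,p}(B'')}\le C\bigl(\|R_h\|_{L^p(B')}+\|w_h\|_{L^p(B')}\bigr)\,.
\]
Since $w_h\to 0$ uniformly (because $\psi_h\to 0$ in $C^{1,\alpha}$), the right-hand side tends to $0$, so $g_h\to g$ in $W^{2,p}(B'')$. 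A standard covering/partition-of-unity argument, together with the fact that on each chart the $W^{2,p}$-norm of $\psi_h$ on $\pa E$ is comparable to the $W^{2,p}$-norm of $g_h-g$ (the diffeomorphism $x\mapsto x+\psi_h(x)\nu(x)$ is $C^{1,\alpha}$-close to the identity), yields $\psi_h\to 0$ in $W^{2,p}(\pa E)$.

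The only subtle point, which I would handle carefully in the write-up, is the uniformity of the Calder\'on--Zygmund constant: this requires an equicontinuity statement for $a_{ij}(\nabla g_h)$, which is furnished exactly by the $C^{1,\alpha}$-compactness of $\{g_h\}$; the rest of the argument is standard local PDE bookkeeping.
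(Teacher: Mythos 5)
Your argument is correct and is essentially the same as the paper's: both localize to graphs, view the mean-curvature equation in non-divergence form, and close the estimate via Calder\'on--Zygmund $L^p$ theory for uniformly elliptic operators with equicontinuous coefficients. The only cosmetic difference is that you keep the coefficients $a_{ij}(\nabla g_h)$ and estimate the difference $w_h=g_h-g$, whereas the paper first freezes the coefficient matrix at $\nabla g$ and applies the estimate to $g_h$ directly; the two bookkeeping schemes are interchangeable. One small point you leave implicit, which the paper states explicitly, is that one must first obtain a qualitative $W^{2,p}_{\loc}$ bound on $g_h$ (e.g.\ from elliptic regularity for the divergence-form equation with $C^\alpha$ coefficients and $L^p$ right-hand side) before the pointwise non-divergence equation and the interior $W^{2,p}$ \emph{estimate} can legitimately be invoked.
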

\begin{proof}
We only sketch the proof: by localization, as in the proof of \eqref{infty6} we may reduce to the case when both $\pa E$ and all $\pa E_h$ are graphs of functions $g,g_h$ in a cylinder 
$C=B'\times(-L,L)$, where $B'\subset\R^{N-1}$ is a ball centered at the origin. From our assumptions we have that $ g\in C^2(B')$ and
$$
g_h\to g\in C^{1,\alpha}(B')\;,\qquad \Div\biggl(\frac{\nabla_{x'} g_h}{\sqrt{1+|\nabla_{x'} g_h|^2}}\biggr)\to \Div\biggl(\frac{\nabla_{x'} g}{\sqrt{1+|\nabla_{x'} g|^2}}\biggr)\quad\text{in}\ L^p(B')\;.
$$
Standard elliptic regularity gives that $g_h$ is bounded in $W^{2,p}$ in a smaller ball $B''$, thus we may carry out the differentiation and using the $C^1$ convergence of $g_h$ to $g$ we are led to
$$A_{ij}(x')\nabla^2_{ij}g_h\to A_{ij}(x')\nabla^2_{ij}g\quad\text{in}\ L^p(B'')\;,$$
where
$$A=I-\frac{\nabla_{x'} g\otimes\nabla_{x'} g}{1+|\nabla_{x'} g|^2}\;.$$
As before, standard elliptic estimates imply the strong (local) convergence in $L^p$ of $\nabla^2_{x'} g_h$.
\end{proof}
\begin{remark}\label{rm:lemmino}
If in the above lemma we replace  \eqref{lemmino1} by
$$
\sup_{h}\|H_{\pa E_h}\|_{L^p(\pa E_h)}<\infty\,,
$$
then the same argument shows  that the functions $\psi_h$ are equibounded in $W^{2,p}(\pa E)$.
\end{remark}

\section*{Acknowledgment}
\noindent This research was supported by the 2008 ERC Advanced Grant in ``Analytic Techniques for
Geometric and Functional Inequalities'',  by the ERC grant 207573 ``Vectorial Problems", and by PRIN 2008 ``Optimal Mass Transportation, Geometric and Functional Inequalities and Applications.''


\begin{thebibliography}{9}                                                                                                %

\bibitem{AdamsFournier}\textsc{Adams R.A.; Fournier J.F.}, \emph{Sobolev Spaces (second edition)} Pure and Applied
Mathematics (Amsterdam), 140. Elsevier/Academic Press, Amsterdam, 2003.

\bibitem{ACO}\textsc{Alberti G.; Choksi R.; Otto F.}, \emph{Uniform energy distribution for an isoperimetric problem with long-range interactions}. J. Amer. Math. Soc. \textbf{22} (2009), 569--605.

\bibitem{Alm} \textsc{Almgren  F.J. Jr.}, \emph{Existence and regularity almost everywhere of solutions to elliptic variational problems with constraints.} Mem. Amer. Math. Soc. \textbf{4} (1976), viii+199 pp. 

\bibitem {AFP}\textsc{Ambrosio L.; Fusco N.; Pallara D.},
\emph{Functions of bounded variation and free discontinuity problems.} Oxford
Mathematical Monographs. The Clarendon Press, Oxford University Press, New
York, 2000.

\bibitem{CMM} \textsc{Cagnetti F.; Mora M.G.; Morini M.}, 
\emph{A second order minimality condition for the Mumford-Shah functional.}
Calc. Var. Partial Differential Equations  \textbf{33}  (2008),  37--74.

 \bibitem{CP} \textsc{Choksi R.; Peletier M.A.}, \emph{Small volume fraction limit of the diblock copolymer problem: I. Sharp-interface functional.} SIAM J. Math. Anal. \textbf{42} (2010),  1334--1370.

\bibitem{CPW} \textsc{Choksi R.; Peletier M.A.; Williams J.F.}, \emph{On the phase diagram for microphase separation of diblock copolymers: an approach via a nonlocal Cahn-Hilliard functional.} SIAM J. Appl. Math. \textbf{69} (2009),  1712--1738.


\bibitem{CS2}\textsc{Choksi R.; Sternberg P.}, \emph{Periodic phase separation: the periodic Cahn-Hilliard and isoperimetric problems.} Interfaces Free Bound. \textbf{8} (2006), 371--392. 

\bibitem{CS}\textsc{Choksi R.; Sternberg P.}, \emph{On the first and second variations
of a nonlocal isoperimetric problem.} J. reine angew. Math. \textbf{611} (2007), 75--108.

\bibitem{CSp}\textsc{Cicalese M.; Leonardi G.}, \emph{A selection principle for the sharp quantitative isoperimetric inequality.} Preprint, 2010.

\bibitem{CL}\textsc{Cicalese M.; Spadaro E.}, \emph{Droplet Minimizers of an Isoperimetric Problem with long-range interactions.} Preprint, 2011.

\bibitem {DalMaso}\textsc{Dal Maso G.,} \emph{An Introduction to $\Gamma
$-Convergence}, Birkha\"{u}ser, 1993.

\bibitem{EF}\textsc{Esposito L.; Fusco N.}, \emph{A remark on a free interface problem with volume constraint.}
J. Convex Anal. \textbf{18} (2011), 417--426.

\bibitem{FiMP}\textsc{Figalli A.; Maggi F.; Pratelli A.,}\emph{A mass transportation approach to quantitative isoperimetric inequalities.} Invent. Math. \textbf{182} (2010), 167--211.


\bibitem{FMP} \textsc{Fusco N.; Maggi F.; Pratelli A.}, \emph{The sharp quantitative isoperimetric inequality.} Ann. of Math. \textbf{168} (2008), 941--980.


\bibitem {FM}\textsc{Fusco N.; Morini M.}, \emph{Equilibrium configurations of epitaxially strained elastic films: second order minimality conditions and qualitative properties of solutions.} Arch. Rational Mech.  Anal. \textbf{203} (2012),  247--327.

\bibitem{vGP}\textsc{van Gennip Y.; Peletier M.A.,} \emph{Stability of monolayers and bilayers in a copolymer-homopolymer blend model.} Interfaces Free Bound. \textbf{11} (2009), 331--373.

\bibitem{GMS1} \textsc{Goldman H.; Muratov C.B.; Serfaty S.}, \emph{The $\Gamma$-limit of the two-dimensional Ohta-Kawasaki energy. I. Droplet density.} Preprint, 2012.


\bibitem{GMS2} \textsc{Goldman H.; Muratov C.B.; Serfaty S.}, \emph{The $\Gamma$-limit of the two-dimensional Ohta-Kawasaki energy. II. Droplet arrangement via the renormalized energy.} Preprint, 2012.

\bibitem{Gr}\textsc{Grosse-Brauckmann K.}, \emph{Stable constant mean curvature surfaces minimize area.} Pacific. J. Math. \textbf{175} (1996), 527--534.

\bibitem{H} \textsc{Hadwiger H.}, \emph{Gitterperiodische Punktmengen und Isoperimetrie.} Monatsh. Math. \textbf{76} (1972), 410--418.

\bibitem{HHM} \textsc{Howards H.; Hutchings M.; Morgan F.}, \emph{The isoperimetric problem on surfaces.} Amer. Math. Monthly \textbf{106} (1999), 430--439.

\bibitem{KM1} \textsc{Kn\"upfer H.; Muratov C.B.}, \emph{On an isoperimetric problem with a competing non-local term. I. The planar case.} Preprint, 2011.

\bibitem{KM2} \textsc{Kn\"upfer H.; Muratov C.B.}, \emph{On an isoperimetric problem with a competing non-local term. II. The general case.} Preprint, 2012.

\bibitem{KS}\textsc{Kohn R.V.; Sternberg P.}, \emph{Local minimisers and singular perturbations.} Proc. Roy. Soc. Edinburgh Sect. A \textbf{111} (1989), 69--84.

\bibitem{Mo}\textsc{Modica L.}, \emph{The gradient theory of phase transitions and minimal interface criterion.} Arch. Rational Mech. Anal. \textbf{98} (1987), 123--142.

\bibitem{MR}\textsc{Morgan F.; Ros A.}, \emph{Stable constant-mean-curvature hypersurfaces are area minimizing in small $L^1$ neighbourhoods.} Interfaces Free Bound. \textbf{12} (2010), 151--155.

\bibitem{MS}\textsc{Morini M.; Sternberg P.}, \emph{Work in progress.} 

\bibitem{Mu}\textsc{M\"uller S.}, \emph{Singular perturbations as a selection criterion for periodic minimizing sequences}. Calc. Var. Partial Differential Equations \textbf{1} (1993),169--204.

\bibitem{Mur0}\textsc{Muratov C.B.}, \emph{Theory of domain patterns in systems with long-range interactions of Coulomb type.} Phys. Rev. E \textbf{66} (2002), 066108.

\bibitem{Mur}\textsc{Muratov C.B.}, \emph{Droplet phases in non-local Ginzburg-Landau models with Coulomb repulsion in two dimensions.} Commun. Math. Phys. \textbf{299} (2010), 45--87. 

\bibitem{MO}\textsc{Muratov C.B., Osipov V.V.}, \emph{General theory of instabilities for patterns with sharp interfaces in reaction-diffusion systems.} Phys. Rev. E  \textbf{53} (1996), 3101--3116. 



\bibitem{OK}\textsc{Ohta T.; Kawasaki K.}, \emph{Equilibrium morphology of block copolymer melts.} Macromolecules \textbf{19} (1986), 2621--2632.

\bibitem{RW0}\textsc{Ren X.; Wei J.}, \emph{Concentrically layered energy equilibria of the di-block copolymer problem}. European J. Appl. Math. \textbf{13} (2002), 479--496.

\bibitem{RW}\textsc{Ren X.; Wei J.}, \emph{On energy minimizers of the diblock copolymer problem}. Interfaces Free Bound. \textbf{5} (2003), 193--238.

\bibitem{RW1}\textsc{Ren X.; Wei J.}, \emph{Stability of spot and ring solutions of the diblock copolymer equation}. J. Math. Phys. \textbf{45} (2004), 4106--4133.

\bibitem{RW2}\textsc{Ren X.; Wei J.}, \emph{Wriggled lamellar solutions and their stability in the diblock copolymer problem}. SIAM J. Math. Anal. \textbf{37} (2005), 455--489.

\bibitem{RW3}\textsc{Ren X.; Wei J.}, \emph{Many droplet pattern in the cylindrical phase of diblock copolymer morphology}. Rev. Math. Phys. \textbf{19} (2007), 879--921.

\bibitem{RW4}\textsc{Ren X.; Wei J.}, \emph{Spherical solutions to a nonlocal free boundary problem from diblock copolymer morphology}. SIAM J. Math. Anal. \textbf{39} (2008), 1497--1535.

\bibitem{Ross}\textsc{Ross M.,} \emph{Schwartz' $P$ and $D$ surfaces are stable.} Differential Geom. Appl. \textbf{2} (1992), 179--195.

\bibitem{SS} \textsc{Schoen R.; Simon  L.M.}, \emph{A new proof of the regularity theorem for rectifiable currents which minimize parametric elliptic functionals.} Indiana Univ. Math. J. \textbf{31} (1982),   415--434.

\bibitem{Simon} \textsc{Simon L.M.}, \emph{Lectures on geometric measure theory.} Proceedings of the Centre for Mathematical Analysis, Australian National University, 3, Canberra, 1983. 

\bibitem{Sp}\textsc{Spadaro E.N.}, \emph{Uniform energy and density distribution: diblock copolymers' functional}. Interfaces Free Bound. \textbf{11} (2009), 447--474.

\bibitem{ST}\textsc{Sternberg P.; Topaloglu I.}, \emph{On the global minimizers of a nonlocal isoperimetric problem in two dimensions}. Interfaces Free Bound. \textbf{13} (2011), 155--169.

\bibitem{Ta} \textsc{Tamanini I.}, \emph{Boundaries of Caccioppoli sets with H\"older-continuous normal vector.} J. Reine Angew. Math. \textbf{334} (1982), 27--39.

\bibitem{Ta2} \textsc{Tamanini I.}, \emph{Regularity results for almost minimal oriented hypersurfaces in $\R^n$} .Quaderni del Dipartimento
di Matematica dell Universit\`a di Lecce \textbf{1} (1984), 1--92.

\bibitem{TAHH} \textsc{Thomas E.L.; Anderson D.M.; Henkee C.S.; Hoffman D.}, \emph{Periodic area-minimizing surfaces in block copolymers.} Nature \textbf{334} (1988), 598--601.

\bibitem{Topa}\textsc{Topaloglu I.}, \emph{On a nonlocal isoperimetric problem on the two-sphere.} Comm. Pure Appl. Anal. \textbf{12}, 597-620, (2013).


\bibitem{white}  \textsc{White B.}, \emph{A strong minimax property of nondegenerate minimal submanifolds.} J. Reine Angew. Math. \textbf{457} (1994), 203--218.
\end{thebibliography}
\end{document}